\PassOptionsToPackage{unicode}{hyperref}
\PassOptionsToPackage{hyphens}{url}
\PassOptionsToPackage{dvipsnames,svgnames,x11names}{xcolor}
\documentclass[
  12pt]{article}
\usepackage{tikz}
\usetikzlibrary{positioning}
 \usetikzlibrary{calc}
\usepackage{amsmath,amssymb,amsthm,mathrsfs}
\usepackage{iftex,bm}
\ifPDFTeX
  \usepackage[T1]{fontenc}
  \usepackage[utf8]{inputenc}
  \usepackage{textcomp} 
\else 
  \defaultfontfeatures{Scale=MatchLowercase}
  \defaultfontfeatures[\rmfamily]{Ligatures=TeX,Scale=1}
\fi
\usepackage{lmodern}
\ifPDFTeX\else  
\fi
\IfFileExists{upquote.sty}{\usepackage{upquote}}{}
\IfFileExists{microtype.sty}{
  \usepackage[]{microtype}
  \UseMicrotypeSet[protrusion]{basicmath} 
}{}
\makeatletter
\@ifundefined{KOMAClassName}{
  \IfFileExists{parskip.sty}{%
    \usepackage{parskip}
  }{
    \setlength{\parindent}{0pt}
    \setlength{\parskip}{6pt plus 2pt minus 1pt}}
}{
  \KOMAoptions{parskip=half}}
\makeatother
\usepackage{xcolor}
\makeatletter
\ifx\paragraph\undefined\else
  \let\oldparagraph\paragraph
  \renewcommand{\paragraph}{
    \@ifstar
      \xxxParagraphStar
      \xxxParagraphNoStar
  }
  \newcommand{\xxxParagraphStar}[1]{\oldparagraph*{#1}\mbox{}}
  \newcommand{\xxxParagraphNoStar}[1]{\oldparagraph{#1}\mbox{}}
\fi
\ifx\subparagraph\undefined\else
  \let\oldsubparagraph\subparagraph
  \renewcommand{\subparagraph}{
    \@ifstar
      \xxxSubParagraphStar
      \xxxSubParagraphNoStar
  }
  \newcommand{\xxxSubParagraphStar}[1]{\oldsubparagraph*{#1}\mbox{}}
  \newcommand{\xxxSubParagraphNoStar}[1]{\oldsubparagraph{#1}\mbox{}}
\fi
\makeatother

\usepackage{longtable,booktabs,array}
\usepackage{calc} 
\usepackage{etoolbox}
\makeatletter
\patchcmd\longtable{\par}{\if@noskipsec\mbox{}\fi\par}{}{}
\makeatother
\IfFileExists{footnotehyper.sty}{\usepackage{footnotehyper}}{\usepackage{footnote}}
\makesavenoteenv{longtable}
\usepackage{graphicx}
\makeatletter
\def\maxwidth{\ifdim\Gin@nat@width>\linewidth\linewidth\else\Gin@nat@width\fi}
\def\maxheight{\ifdim\Gin@nat@height>\textheight\textheight\else\Gin@nat@height\fi}
\makeatother
\setkeys{Gin}{width=\maxwidth,height=\maxheight,keepaspectratio}
\makeatletter
\def\fps@figure{htbp}
\makeatother

\makeatletter
\@ifpackageloaded{caption}{}{\usepackage{caption}}
\AtBeginDocument{%
\ifdefined\contentsname
  \renewcommand*\contentsname{Table of contents}
\else
  \newcommand\contentsname{Table of contents}
\fi
\ifdefined\listfigurename
  \renewcommand*\listfigurename{List of Figures}
\else
  \newcommand\listfigurename{List of Figures}
\fi
\ifdefined\listtablename
  \renewcommand*\listtablename{List of Tables}
\else
  \newcommand\listtablename{List of Tables}
\fi
\ifdefined\figurename
  \renewcommand*\figurename{Figure}
\else
  \newcommand\figurename{Figure}
\fi
\ifdefined\tablename
  \renewcommand*\tablename{Table}
\else
  \newcommand\tablename{Table}
\fi
}
\@ifpackageloaded{float}{}{\usepackage{float}}
\floatstyle{ruled}
\@ifundefined{c@chapter}{\newfloat{codelisting}{h}{lop}}{\newfloat{codelisting}{h}{lop}[chapter]}
\floatname{codelisting}{Listing}

\makeatother
\makeatletter
\@ifpackageloaded{caption}{}{\usepackage{caption}}
\@ifpackageloaded{subcaption}{}{\usepackage{subcaption}}
\makeatother

\ifLuaTeX
  \usepackage{selnolig}  
\fi
\usepackage[]{natbib}
\usepackage{bookmark}

\IfFileExists{xurl.sty}{\usepackage{xurl}}{} 
\hypersetup{
  pdftitle={Title},
  pdfauthor={Author 1; Author 2},
  pdfkeywords={3 to 6 keywords, that do not appear in the title},
  colorlinks=true,
  linkcolor={blue},
  filecolor={Maroon},
  citecolor={Blue},
  urlcolor={Blue},
  pdfcreator={LaTeX via pandoc}}

\newcommand{\anon}{1}

\DeclareMathOperator*{\argmax}{arg\,max}

\newcommand{\norm}[1]{\left\lVert #1 \right\rVert}

\newcommand{\bmu}{\boldsymbol{\mu}}
\newcommand{\tr}{\mathrm{tr}}
\newcommand{\calB}{\mathcal{B}}

\newcommand{\calD}{\mathcal{D}}
\newcommand{\calL}{\mathcal{L}}
\newcommand{\calP}{\mathcal{P}}
\newcommand{\calH}{\mathcal{H}}

\newcommand{\calQ}{\mathcal{Q}}
\newcommand{\calN}{\mathcal{N}}
\newcommand{\R}{\mathbb{R}}

\newcommand{\bigo}{\mathcal{O}}

\newcommand{\bE}{\mathbf{E}}
\newcommand{\bW}{\boldsymbol{W}}
\newcommand{\bX}{\boldsymbol{X}}
\newcommand{\bR}{\mathbf{R}}
\newcommand{\bpix}{\boldsymbol{\pi}_X}
\newcommand{\bpis}{\boldsymbol{\pi}_S}
\newcommand{\bPix}{\boldsymbol{\Pi}_X}
\newcommand{\bPis}{\boldsymbol{\Pi}_S}
\newcommand{\bPi}{\boldsymbol{\Pi}}
\newcommand{\bpi}{\boldsymbol{\pi}}
\newcommand{\hatpi}{\widehat{\boldsymbol{\pi}}}
\newcommand{\hatPi}{\widehat{\boldsymbol{\Pi}}}

\newcommand{\bY}{\boldsymbol{Y}}
\newcommand{\bM}{\mathbf{M}}
\newcommand{\bB}{\mathbf{B}}
\newcommand{\bK}{\mathbf{K}}
\newcommand{\by}{\boldsymbol{y}}
\newcommand{\bu}{\boldsymbol{u}}

\newcommand{\bGam}{\boldsymbol{\Gamma}}
\newcommand{\bQ}{\boldsymbol{Q}}

\newcommand{\bT}{\mathbf{T}}
\newcommand{\br}{\boldsymbol{r}}
\newcommand{\bV}{\mathbf{V}}
\newcommand{\bS}{\boldsymbol{S}}

\newcommand{\snr}{\mathrm{SNR}}
\newcommand{\bdiag}{\mathrm{bdiag}}
\newcommand{\bDel}{\boldsymbol{\Delta}}
\newcommand{\bA}{\mathbf{A}}
\newcommand{\boldW}{\mathbb{W}}
\newcommand{\boldY}{\mathbb{Y}}
\newcommand{\boldX}{\mathbb{X}}
\newcommand{\boldM}{\mathbb{M}}
\newcommand{\boldV}{\mathbb{V}}
\newcommand{\bZ}{\boldsymbol{Z}}

\newcommand{\bP}{\mathbf{P}}
\newcommand{\bLam}{\boldsymbol{\Lambda}}
\newcommand{\blam}{\boldsymbol{\lambda}}
\newcommand{\bx}{\boldsymbol{x}}
\newcommand{\bI}{\mathbf{I}}
\newcommand{\beps}{\boldsymbol{\epsilon}}
\newcommand{\bet}{\boldsymbol{\eta}}
\newcommand{\bSig}{\boldsymbol{\Sigma}}
\newcommand{\Siginv}{\boldsymbol{\Sigma}^{-1}}

\newcommand{\pr}{\mathbb{P}}
\newcommand{\E}{\mathbb{E}}

\newcommand{\bSigma}{{\boldsymbol{\Sigma}}}

\newcommand{\bTh}{\boldsymbol{\Theta}}
\newcommand{\bth}{\boldsymbol{\theta}}

\newcommand{\tilX}{\widetilde{\boldsymbol{X}}}
\newcommand{\tilY}{\widetilde{\boldsymbol{Y}}}

\newcommand{\bgam}{\boldsymbol{\gamma}}
\newcommand{\bzero}{\boldsymbol{0}}
\newcommand{\Prob}{\mathbb{P}}

\newcommand{\betahat}{\hat{\beta}}

\newtheorem{theorem}{Theorem}

\newtheorem{lemma}{Lemma}
\newtheorem{proposition}{Proposition}

\newtheorem{assumption}{Assumption}

\usepackage{mathtools}
\usepackage{nicefrac} 
\usepackage{hyperref}
\usepackage{xcolor}
\usepackage{algorithm}
\usepackage{algpseudocode}
\definecolor{darkgreen}{RGB}{0,100,0}

\usepackage{xr-hyper}

\begin{document}

\def\spacingset#1{\renewcommand{\baselinestretch}%
{#1}\small\normalsize} \spacingset{1}


\if1\anon
{
  \title{\bf Doubly-Unlinked Regression for Dependent Data}
  \author{Anik Burman$^{1}$, Sayantan Choudhury$^{2}$, Debangan Dey$^{3}$\thanks{
    Address for correspondence: debangan@tamu.edu}\vspace{3mm}\\
    $^{1}$Department of Biostatistics, Johns Hopkins University\\
    $^2$Department of Statistics and Data Science, MBZUAI \\
    $^3$Department of Statistics, Texas A\&M University}
    \date{}
  \maketitle
} \fi

\if0\anon
{
  \bigskip
  \bigskip
  \bigskip
  \begin{center}
    {\LARGE\bf Title}
\end{center}
  \medskip
} \fi
\bigskip
\date{}
\begin{abstract}
Shuffled regression concerns settings in which covariates and responses are observed without their correct pairing. In dependent-data problems, a second form of missing correspondence can arise when responses are also detached from the latent temporal, spatial, or geometric domain that induces their dependence structure. We study regression under this joint loss of correspondence and, to our knowledge, provide the first systematic treatment of this setting. Specifically, we consider a doubly-unlinked regression model in which both the covariate--response link and the response--domain link are unknown, represented by two latent permutation matrices, while dependence is induced by an unobserved stochastic process. This framework unifies shuffled regression and latent-domain permutation models within a common dependent-data setting. We characterize signal-to-noise regimes governing recovery of the regression parameter and the latent permutations, and show that consistent estimation of the regression coefficient can be achieved under strictly weaker conditions than exact permutation recovery. To address the combinatorial difficulty of inference, we develop REPAIR, a variational Bayes method based on a block-structured permutation model that captures localized scrambling while substantially reducing computational complexity. Simulations and an applied example illustrate the empirical behavior of REPAIR and support the theoretical results.
\end{abstract}

\noindent%
{\it Keywords:} Shuffled regression; Dependent data; Spatial data; Mat\'ern covariance; Permutation inference; Variational Bayes 
\vfill

\newpage
\spacingset{1.8} 

\section{Introduction}\label{sec-intro}

Regression with unknown correspondence, also referred to as shuffled regression, unlinked regression, or unlabeled sensing, concerns settings in which the usual pairing between covariates and responses is unavailable. Rather than observing matched pairs, the analyst observes the covariate and response collections up to an unknown permutation. The inferential task therefore, differs fundamentally from classical regression: estimation of the regression parameters must be carried out simultaneously with recovery of latent combinatorial structure. The problem has attracted substantial recent attention because correspondence errors arise naturally in modern data integration tasks such as record linkage across databases and distributed sensing systems \citep{unnikrishnan2015unlabeled, pananjady2017linear, pananjady2018, hsu2017, zhang2020optimal, slawski2019, beuthner2021data}.

A closely related but distinct source of missing correspondence arises in dependent-data settings, where responses are indexed by an underlying temporal, spatial, or latent geometric domain, but their alignment with that domain is unknown. Problems of this type appear in seriation problems in archaeology and anthropology \citep{banning2020seriation}, pseudo-time analysis in single-cell transcriptomics \citep{gu2022bayesian}, disease progression modeling \citep{wijeratne2024unscrambling}, spike sorting in dense neural arrays \citep{rossant2016spike, pachitariu2024spike}, sensor localization \citep{el2023localizing}, and privacy-sensitive environmental epidemiology \citep{lin2023geo, cassa2008re}. 

Although the current literature has largely addressed broken covariate–response and response–domain links separately, modern datasets often exhibit simultaneous failures of both. In privacy-sensitive digital mental health studies, record-linkage errors may disrupt covariate–response pairings while locations are masked for privacy, breaking the connection to the spatial domain. Spatial transcriptomics provides an analogous example: sample mixing can scramble cell-to-profile correspondence while positional information may be lost during batch processing. Motivated by these situations, we study the theoretical implications of the doubly-unlinked regime (Fig. \ref{fig:unlinked-reg-illustration}) and propose a novel method to handle it. Let $\bX=(X(s_1),\dots,X(s_n))^\top$ denote the covariate vector, let $\bY=(Y(s_1),\dots,Y(s_n))^\top$ denote the response vector, and let $\bW=(W(s_1),\dots,W(s_n))^\top$ denote a latent stochastic process at the sampled domain points $s_1,\dots,s_n$. We consider the Data Generating Process (DGP)
\begin{equation}
\label{eq:dgp}
\bY=\bPix \bX \beta+\bPis \bW+\beps,
\end{equation}
where $\bPix$ and $\bPis$ are unknown permutation matrices. The permutation $\bPix$ breaks the correspondence between $\bX$ and $\bY$, while $\bPis$ breaks the correspondence between $\bY$ and the latent domain through which dependence is induced. The DGP \eqref{eq:dgp} contains the usual shuffled regression model and latent-domain permutation model as special cases, while also allowing the two sources of missing correspondence to operate jointly.

\begin{figure}[!ht]
    \centering
    \includegraphics[width=\textwidth]{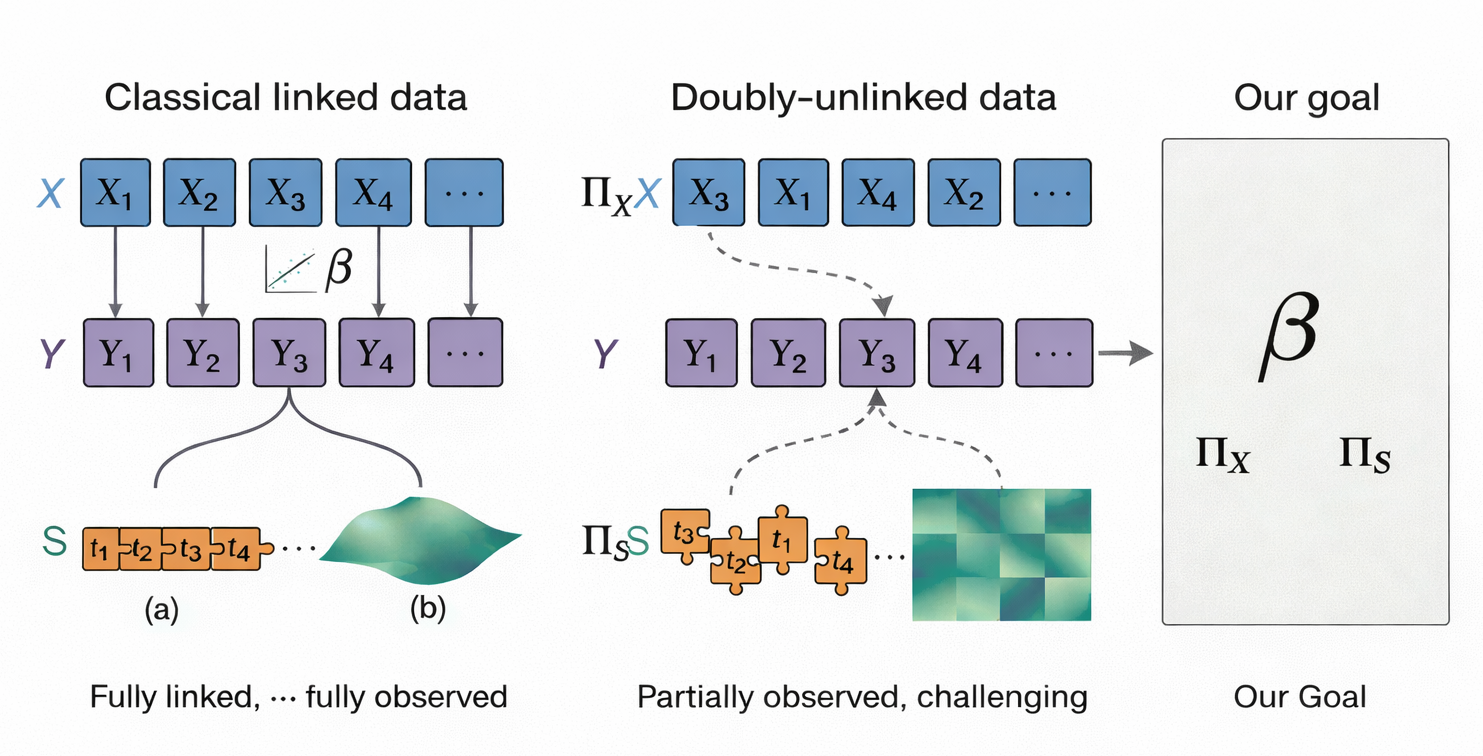}
    \caption{Schematic illustration of the doubly-unlinked regression setting. The left panel shows the classical linked-data setup with aligned exposures, outcomes, and domain variables. The middle panel illustrates the doubly-unlinked case where exposures and domain variables are observed after unknown permutations. The goal is to recover the regression effect $\beta$ and the permutation matrices $\bpix$ and $\bpis$.}
    \label{fig:unlinked-reg-illustration}
\end{figure}

The statistical and computational structure of \eqref{eq:dgp} differs qualitatively from either single-link problem, and to our knowledge has not been systematically studied. Computationally, optimization over two unknown permutations yields a search space of size $(n!)^2$: while the covariate permutation $\bPix$
is polynomial-time solvable in isolation, the latent-domain permutation $\bPis$ leads to a quadratic assignment problem, which
is NP-hard \citep{loiola2007}, and their coupling makes the combined problem substantially more difficult. Statistically, the joint problem is not a simple combination of the two single-link settings: $\beta$ must be inferred simultaneously against covariate mismatch and a permuted dependence structure, latent dependence alters the information geometry, and the recoverability of $\beta$
and its relationship to exact permutation recovery requires new analysis.

The theory on unlinked regression has primarily focused on settings where only the covariate--response correspondence is unknown. Foundational work in unlabeled sensing and shuffled linear regression established identifiability and recovery guarantees under independent and identically distributed (iid) models, characterizing both statistical and computational limits \citep{unnikrishnan2015unlabeled, pananjady2017linear, pananjady2018}. Building on these results, subsequent research developed polynomial-time relaxations and optimal estimators, and more recent methods leverage spectral or graph-based approaches to improve computational efficiency and robustness \citep{hsu2017, zhang2020optimal, liu2025shuffled, liu2024blind}.

This paper makes three contributions. First, we formulate a doubly-unlinked regression model that unifies shuffled regression and latent-domain permutation problems within a common dependent-data framework. This formulation makes it possible to study how regression and dependence interact under simultaneous loss of correspondence.

Second, we characterize signal-to-noise regimes governing recovery of the regression parameter and the permutation matrices. Our analysis shows that the condition required for consistent estimation of $\beta$ is strictly weaker than that required for exact permutation recovery. Thus, regression inference can remain feasible even when exact reconstruction of the latent correspondences is information-theoretically unattainable.

Third, we develop a computationally tractable variational Bayes procedure for joint inference on the regression parameter, the latent permutations, and the covariance parameters of the latent process. Because unrestricted inference under \eqref{eq:dgp} is combinatorially prohibitive, we work under a block-structured permutation model that captures localized scrambling while substantially reducing complexity. We study the proposed method through theory, simulation, and an applied analysis.

The remainder of the paper is organized as follows. Section~\ref{sec:dgp} introduces the data-generating process and formal assumptions. Section~\ref{sec:theory} presents the main theoretical results, including signal-to-noise conditions for recovery of the regression parameter and the latent permutations. Section~\ref{sec:vb} develops REPAIR, our variational Bayes method for computationally tractable joint inference. Section~\ref{sec:simulation} investigates the finite-sample performance of REPAIR in a range of simulation settings, Section~\ref{sec:data} presents an applied example, and Section~\ref{sec:discussion} concludes with a discussion of the broader implications of the proposed framework and possible future directions.

\section{Background and Problem Setup}\label{sec:dgp}

\textbf{Notation.}
We use the standard order notation $\bigo(\cdot)$ and $\Omega(\cdot)$ throughout. For two nonnegative sequences $a_n$ and $b_n$, we write $a_n=\bigo(b_n)$ if there exist constants $C>0$ and $N$ such that $a_n\le C b_n$ for all $n\ge N$, and $a_n=\Omega(b_n)$ if there exist constants $c>0$ and $N$ such that $a_n\ge c b_n$ for all $n\ge N$. For a vector $\bx\in\R^d$, $\|\bx\|_2$ denotes the Euclidean norm. For a matrix $\bA$, $\|\bA\|_2$ denotes the spectral norm unless stated otherwise.

\textbf{Formal Problem Setup.} We consider the data generating process \eqref{eq:dgp} over a latent domain $\calD\subset\R^k$. Let $\{s_i\}_{i=1}^n\subset\calD$ denote observed domain points, and let $\bW$ 
be a mean-zero latent process with covariance matrix $\bSig^\ast$. It represents structured dependence induced by the domain $\calD$. Such dependence is ubiquitous in time-series analysis, spatial statistics, longitudinal studies, sensor networks, and manifold-based biological data. Even if individual-level correspondences are scrambled, the covariance structure induced by $W(s_i)$ reflects the geometry or ordering of the underlying domain. Our primary inferential objective is estimation of $\beta$, with recovery of the permutations treated as a secondary objective when feasible. We begin by stating the assumptions imposed on the DGP in \eqref{eq:dgp}. Additional structural restrictions used to make inference tractable will be introduced subsequently.

\begin{assumption}[Data Generation Assumption]\label{assume:dgp}
We assume that the data generation process satisfies:
\begin{itemize}
    \item The exposure vector $\bX \sim \calN_n\left(\bzero, \sigma_X^2 \bI_n \right)$.
    \item $W(s_i)$ is a realization from $\mathcal{GP}(0, C(\cdot,\cdot))$ where the covariance function $C(\cdot,\cdot)$ belongs to a known isotropic class indexed by parameters $\bGam = \left(\gamma_1, \cdots, \gamma_c\right)^\top$.
    \item $\bX$ is independent of $\bW$, i.e., there is no latent endogeneity.
    \item The random error vector $\beps \sim \calN_n \left(\bzero, \tau^2 \bI_n\right)$ is independent of $\bX$ and $\bW$.
\end{itemize}
\end{assumption}

Under Assumption~\ref{assume:dgp}, the random effects vector observed at the $n$ domain indices $\bS=(s_1,\dots,s_n)^{\top}$ satisfies $\bW \sim \calN_n(\bzero, \bSig^\ast)$ where $\bSig^\ast = \sigma^2\bR_{\bth}$ and $\bR_{\bth}$ is the corresponding correlation matrix induced by $C(\cdot,\cdot)$ evaluated at $\bS$. Marginalizing over $\bW$ yields $\bY|\bX \sim  \calN_n\left(\bPix\bX\beta, \bPis\left(\bSig^\ast + \tau^2 \bI_n\right)\bPis^\top\right)$. Defining $\bSig = \bSig^\ast + \tau^2 \bI_n$, the conditional covariance of $\bY$ is $\bPis\bSig\bPis^\top$.

Because the latent process $\bW$ is not observed, direct inference on $\bPis$ is generally infeasible. For theoretical purposes, it is convenient to reparameterize $(\bPis,\bPix)$ through the one-to-one transformation $(\bPis,\bPix)\mapsto (\bPi_1,\bPi_2)$ defined by $\bPi_1=\bPis^\top \bPix$ and $\bPi_2=\bPis^\top$. We emphasize that this reparameterization is introduced only to facilitate analysis; the methodological development will be presented in the original parametrization. Left-multiplying \eqref{eq:dgp} by $\bPi_2$ gives $\bPi_2 \bY = \bPi_1 \bX \beta + \bW^\ast$, where $\bW^\ast \sim \calN_n(\bzero,\bSig)$. This representation recasts the problem as a Generalized Least Squares (GLS) version of shuffled regression under correlated errors.
Assuming that $\bSig$ is known, define $\bTh \coloneqq \left(\bPi_1, \bPi_2, \beta \right)$ and consider the GLS loss function
\begin{equation}\label{eq:loss-fn}
    \begin{split}
            \mathcal{L}(\bTh)  \coloneqq & \left|\left|\widetilde{\bY}_{\bPi_2} - \widetilde{\bX}_{\bPi_1}\beta\right|\right|^2_2 \\
     = & \underbrace{\left|\left|\bP_{\bPi_1,X}^\perp\widetilde{\bY}_{\bPi_2}\right|\right|^2_2}_{\calL_1(\bTh)} + \underbrace{\left|\left|\bP_{\bPi_1,X}\widetilde{\bY}_{\bPi_2} - \widetilde{\bX}_{\bPi_1}\beta\right|\right|^2_2}_{\calL_2(\bTh)}, 
    \end{split}
\end{equation}
where $\widetilde{\bX}_{\bPi_1} \coloneqq {\bSig}^{- \nicefrac{1}{2}}\bPi_1 \bX$, $\widetilde{\bY}_{\bPi_2} \coloneqq {\bSig}^{-\nicefrac{1}{2}}\bPi_2 \bY$, and $\bP_{\bPi_1, X} \coloneq \widetilde{\bX}_{\bPi_1}\left(\widetilde{\bX}_{\bPi_1}^\top \widetilde{\bX}_{\bPi_1}\right)^{-1}\widetilde{\bX}_{\bPi_1}^\top$. We need to minimize the loss function \eqref{eq:loss-fn} in order to estimate $\bTh$.

\textbf{Block Permutation Structure.}
The fully unrestricted permutation model allows $(n!)^2$ possible pairs $(\bPi_1,\bPi_2)$, rendering both theoretical characterization and computation intractable for even moderate $n$. Moreover, in many practical settings, scrambling mechanisms are not globally arbitrary but instead exhibit localized structure. To reflect this and to obtain a statistically meaningful intermediate regime between complete correspondence and fully adversarial permutations, we impose a structured restriction on $\bPi_1$ and $\bPi_2$.

For $n$ sampled domain indices, we partition them into $B$ blocks of size $K$, so that $n = KB$. Within each block, exposure values and domain indices may be permuted, but no permutation occurs across blocks. This restriction models localized scrambling mechanisms such as batching, windowed anonymization, temporal shuffling within short time segments, or partial reordering within contiguous domain neighborhoods. In such settings, coarse-scale ordering across blocks is preserved, while fine-scale correspondences within blocks are obscured.

From a statistical perspective, the block structure creates an intermediate inferential regime: the latent dependence induced by $W(s_i)$ continues to operate across blocks, preserving large-scale covariance structure, while local permutations disrupt short-range alignment between $\bX$ and $\bY$. This separation allows us to study whether regression recovery can exploit global dependence even when local correspondences are scrambled. The block model, therefore, isolates the interaction between dependence and permutation noise without requiring recovery over the full combinatorial space.

We further assume that, for the two $K \times K$ permutation matrices $\bpi_1 \coloneqq \bpix^\top \bpis$ and $\bpi_2 \coloneqq \bpis^\top$, the corresponding full permutation matrices of size $n = KB$ satisfy
$\bPi_u = \operatorname{bdiag}(\bpi_u,\dots,\bpi_u)$ for $ u = 1,2$ where $\operatorname{bdiag}()$ denotes block diagonal matrix. Thus, the same within-block permutation is repeated across all $B$ blocks. This assumption is convenient both analytically and computationally, as it imposes a simple structure while still allowing for local scrambling within each block. It is also natural in settings where observations are perturbed according to a common mechanism across blocks; for example, when data are masked prior to release, the same permutation rule may be applied repeatedly within each block. Under this specification, there is a correspondence between $\bPi_X$ and $\bPi_S$ with the corresponding block matrices $\bpix$ and $\bpis$. Similarly any estimate $\hatPi_u$ can be represented as $\bdiag(\hatpi_u)$. For Hamming distance $d_H$ between $\bI_K$ and $\bpi_u$ equal to $d_H(\bI_K,\bpi_u) = k_u$, we have $d_H(\bI_n,\bPi_u) \coloneq h_u = k_uB$, and analogous definitions apply to $\hatpi_u$ and $\hatPi_u$ with corresponding $\hat{k}_u$. We will use these notations throughout.

Under the Gaussian assumptions as seen in Assumption \ref{assume:dgp}, minimizing the GLS loss \eqref{eq:loss-fn} over $\bTh = (\bPi_1, \bPi_2, \beta)$ within the block permutation class coincides with Maximum Likelihood Estimation (MLE). We therefore study the consistency of the maximum likelihood estimators of $\beta$, $\bPi_1$, and $\bPi_2$ under the block model and the assumed DGP. In particular, we ask whether consistent estimation of $\beta$ requires exact recovery of the permutations, or whether the latent dependence structure induced by $W$ permits regression recovery under weaker signal-to-noise conditions. The next section establishes theoretical results characterizing the regimes governing permutation and regression consistency.

\section{Theoretical Guarantees for MLE}\label{sec:theory}
\textbf{Signal-to-Noise Ratio.} The recoverability of the regression parameter $\beta$ and the permutation matrices $(\bPi_X,\bPi_S)$ under DGP \eqref{eq:dgp} depends on the relative strength of the linear signal compared to the latent-domain variability. To quantify this balance, we define the Signal-to-Noise Ratio (SNR) as
\begin{eqnarray*}
\snr \coloneq \dfrac{\beta^2}{\lambda_{\max}(\bSig)\kappa(\bSig)},
\end{eqnarray*}
where $\lambda_{\max}(\bSig)$ is the largest eigenvalue of $\bSig = \sigma^2\bR_\theta +\tau^2\bI_n$ and $\kappa(\bSig)$ is its condition number. This definition reflects both the magnitude of the regression signal $\beta$ and the complexity induced by dependence in the latent process.

This SNR is well-defined whenever the eigenvalues of $\bSig$ are bounded away from zero and infinity, ensuring that both $\lambda_{\max}(\bSig)$ and $\kappa(\bSig)$ are finite. Such conditions hold in many dependent-data settings of interest. In the spatial setting, \citet{zhan2024neural} establishes bounded minimum and maximum eigenvalues for covariance matrices arising from the Mat\'ern family under growing-domain asymptotics, which in turn implies a finite condition number. In the time-series setting, if the latent process is weakly stationary, then its covariance matrix is Toeplitz, since its entries depend only on temporal lag. Classical Toeplitz theory then links the eigenvalues of such covariance matrices to the spectral density, implying that bounded spectral densities yield finite and uniformly controlled eigenvalues; see, for example, \citet{gray2006toeplitz} and \citet{xiao2012covariance}.

In the special case $\bSig = \tau^2 \bI_n$, corresponding to iid errors with no latent-domain dependence, we have $\lambda_{\max}(\bSig)=\tau^2$ and $\kappa(\bSig)=1$, so that $\snr$ reduces to $\beta^2/\tau^2$, recovering the classical definition used in iid shuffled regression settings \citep{pananjady2017linear,slawski2019}. Thus, our definition extends the standard SNR to dependent-data settings.

\textbf{Rates for Recovery.} Here we study recovery guarantees for maximum likelihood estimation under the block permutation model. The most general inferential objective is joint estimation of the block diagonal components of the permutation matrices $(\bpi_1,\bpi_2)$ in the space $\calP_K \times \calP_K$ where $\calP_K$ denotes the space of all $K$ dimensional permutation matrix,  along with the regression parameter $\beta$. Under Gaussian assumptions, minimizing the loss $\mathcal{L}(\bTh)$ defined in Equation \eqref{eq:loss-fn} yields the maximum likelihood estimator (MLE). Notice that $\calL(\bTh)$ can be factored into two parts $\calL(\bTh) = \calL_1(\bTh) + \calL_2(\bTh),$ where $\calL_1(\bTh) = \left|\left|\bP_{\bPi_1,X}^\perp\widetilde{\bY}_{\bPi_2}\right|\right|^2_2$ which governs permutation alignment and $\calL_2(\bTh) = \left|\left|\bP_{\bPi_1,X}\widetilde{\bY}_{\bPi_2} - \widetilde{\bX}_{\bPi_1}\beta\right|\right|^2_2$ governs regression estimation. For a given value of $(\hatpi_1,\hatpi_2)$ obtained by optimizing the loss function $\calL_1(\bTh)$, $\calL_2$ can be exactly set to 0, which will give us the closed form estimate of $\beta$ given by $\hat{\beta}  = \left(\widetilde{\bX}_{\hatPi_1}^\top  \widetilde{\bX}_{\hatPi_1}\right)^{-1} \widetilde{\bX}_{\hatPi_1}^\top \widetilde{\bY}_{\hatPi_2}.$  We first characterize conditions under which the permutation matrices are exactly recoverable which will directly imply $\beta$ is recoverable.

\begin{theorem}\label{thm:error-pi1-pi2}
        For $\snr = \Omega(K^\alpha)$ with $\alpha >1$ and $B \geq B_\ast(K,\alpha) \coloneq \frac{\alpha\log K}{K^\alpha - K}$, with the MLE estimator $\left( \hatPi_{1, \texttt{ML}}, \hatPi_{2, \texttt{ML}} \right) = \underset{(\bpi_1,\bpi_2)\in \calP_K\times \calP_K}{\argmax} \left|\left|\bP_{\bPi_1,X}^\perp\widetilde{\bY}_{\bPi_2}\right|\right|^2_2$, we have
    \begin{eqnarray*}
        \Prob \left( \left( \hatPi_{1, \texttt{ML}}, \hatPi_{2, \texttt{ML}} \right) \neq \left( \bPi_1, \bPi_2 \right) \right) = \bigo\left(K^4 \snr ^{-c^\ast_1}e^{-2c^\ast_1B}\right) + \bigo\left(K^2 e^{-c^\ast_2 B}\right),
    \end{eqnarray*}
    where the total sample size $n = KB$ for some universal constant $c^\ast_1,c^\ast_2 >0$.
\end{theorem}

The proof is provided in Appendix \ref{thm:error-pi1-pi2-proof}. The theorem suggests that under sufficiently strong signal-to-noise conditions that grow polynomially in the block size $K$, the permutation matrices can be recovered with exponentially decaying error probability in the number of blocks $B$. Throughout our analysis, the block size $K$ is treated as fixed, while the number of blocks $B$ is allowed to grow, so that the total sample size $n = KB$ increases through $B$. In this regime, the lower bound on the required SNR depends only on $K$ and does not scale with the total sample size $n$. This contrasts with the fully unstructured permutation setting studied in \cite{pananjady2017linear}, where the SNR requirement scales with $n^\alpha$. By restricting permutations to local neighborhoods of fixed size, the block structure yields substantially weaker signal requirements while still allowing consistent recovery as $B \to \infty$.

However, exact recovery of permutations may not be necessary for consistent estimation of the regression parameter. The next result establishes that $\beta$ can be consistently estimated under strictly weaker conditions.

\begin{theorem}\label{thm:error-betahat}
    Assuming $\log \snr > 1$ and the number of blocks $B = \Omega\left(\nicefrac{(1+\gamma)\log K}{\phi(\snr)}\right)$  with $\gamma >0$, for the estimate $\betahat = \left(\widetilde{\bX}_{\hatPi_1}^\top  \widetilde{\bX}_{\hatPi_1}\right)^{-1} \widetilde{\bX}_{\hatPi_1}^\top  \widetilde{\bY}_{{\hatPi}_2}$ where the estimates $\left(\hatPi_1,\hatPi_2\right) = \underset{(\bpi_1,\bpi_2)\in \calP_K\times \calP_K}{\argmax} \left|\left|\bP_{\bPi_1,X}^\perp\widetilde{\bY}_{\bPi_2}\right|\right|^2_2$, for some $\delta < 1$, we have:
    $$
    \left|\betahat - \beta\right| \leq \sqrt{5\lambda_{\max}(\bSig)\cdot\kappa(\bSig)}\cdot \dfrac{n^{-(1 - \delta)/2}}{1 - n^{-(1 - \delta)/2}}
    $$
    with probability 
    $$
    p = 1 - c_1 K^2 e^{-2B \phi(\snr)} - 2\exp\left(- n^\delta\right),
    $$
    where $n = KB$ and $\phi(\snr) \coloneq \frac{(\log \snr-1)^2}{\log \snr}$ and $c_1 >0$ is a universal constant.
\end{theorem}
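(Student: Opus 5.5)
Since the loss $\calL_2$ is set to zero, $\betahat$ is the GLS coefficient obtained from the estimated permutations. The plan is to reduce $|\betahat-\beta|$ to two things: a bound on how far $(\hatPi_1,\hatPi_2)$ can be from the truth, and a concentration bound for the quadratic forms of $\bX$ and $\bW^\ast$ that enter $\betahat$. Substituting $\bPi_2\bY=\bPi_1\bX\beta+\bW^\ast$ gives $\hatPi_2\bY = \hatPi_2\bPi_2^\top(\bPi_1\bX\beta+\bW^\ast)$, and hence
\begin{equation*}
\betahat-\beta = \frac{\widetilde{\bX}_{\hatPi_1}^\top \bSig^{-1/2}\bigl(\hatPi_2\bPi_2^\top\bPi_1-\hatPi_1\bigr)\bX\beta + \widetilde{\bX}_{\hatPi_1}^\top\bSig^{-1/2}\hatPi_2\bPi_2^\top\bW^\ast}{\widetilde{\bX}_{\hatPi_1}^\top\widetilde{\bX}_{\hatPi_1}}.
\end{equation*}
The first term is a \emph{mismatch bias} and vanishes when $\hatPi_2\bPi_2^\top\bPi_1=\hatPi_1$. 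The second term is a GLS noise term.

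\textbf{Step 1: the mismatch event.} I would show that, with probability at least $1-c_1K^2e^{-2B\phi(\snr)}$, the permutation pair selected by $\calL_1$ satisfies $\hatpi_2\bpi_2^\top\bpi_1=\hatpi_1$. This is weaker than exact recovery: only the composite alignment between $\bX$ and $\bY$ needs to be correct. The argument mimics the proof of Theorem~\ref{thm:error-pi1-pi2}. For each wrong pair I would compare $\calL_1$ at that pair with $\calL_1$ at the truth. The difference is a quadratic form in the Gaussian vector $(\bX,\bW^\ast)$, and it has a block-repeated structure: it is a sum of $B$ independent-ish contributions, up to cross-block correlation in $\bSig$, which is handled through $\kappa(\bSig)$. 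A Chernoff bound with tilting parameter chosen as $\log\snr$ should give a per-pair exponent $2B(\log\snr-1)^2/\log\snr=2B\phi(\snr)$. The $\log\snr>1$ assumption ensures this exponent is positive.

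A union bound must then run over the wrong configurations. These can be grouped by Hamming distance of the composite permutation, with the dominant contributions coming from transposition-type errors; there are $\bigo(K^2)$ of them, which produces the $K^2$ prefactor. For larger Hamming distances the exponent grows, so the sum stays of the same order. The hypothesis $B=\Omega((1+\gamma)\log K/\phi(\snr))$ makes $K^2e^{-2B\phi}$ vanish.

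\textbf{Step 2: the noise term on the good event.} On this event the bias term vanishes. Write $\bu=\widetilde{\bX}_{\hatPi_1}$ and $\bV=\bSig^{-1/2}\hatPi_2\bPi_2^\top\bW^\ast$, where $\bV\sim\calN(\bzero,\bSig^{-1/2}\bSig\bSig^{-1/2})=\calN(\bzero,\bI_n)$ up to the permutation conjugation, whose covariance has eigenvalues in $[\lambda_{\min},\lambda_{\max}]/\lambda_{\min}$, i.e. at most $\kappa(\bSig)$. Hanson--Wright or Laurent--Massart type bounds then give, with probability $1-2e^{-n^\delta}$:
\begin{itemize}
\item $\|\bu\|_2^2 \ge n\sigma_X^2\lambda_{\max}(\bSig)^{-1}(1-n^{-(1-\delta)/2})$;
\item $|\bu^\top\bV|\le \sqrt{c\,\kappa(\bSig)}\,\|\bu\|_2\, n^{\delta/2}$.
\end{itemize}
These take $t=n^\delta$ deviations in chi-square concentration. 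The ratio is then at most
\[
\sqrt{5\lambda_{\max}\kappa}\;\frac{n^{-(1-\delta)/2}}{1-n^{-(1-\delta)/2}},
\]
after absorbing $\sigma_X$ and constants into the factor $5$, which comes from combining the two tail constants. The denominator $1-n^{-(1-\delta)/2}$ is exactly the lower deviation of $\|\bu\|_2^2/n$. A final union bound over Steps 1 and 2 gives the stated probability $p$.

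\textbf{Main obstacle.} The hard part is Step 1: extracting the precise exponent $\phi(\snr)$ and showing that only the composite permutation matters. The difficulty is the correlation across blocks induced by $\bSig$, which prevents a clean product of independent block moment generating functions. My first attempt would be to bound the moment generating function of the relevant Gaussian quadratic form by its eigenvalues. Those eigenvalues are controlled via $\lambda_{\max}(\bSig)$ and $\kappa(\bSig)$, which is precisely why $\snr$ is normalized by $\lambda_{\max}\kappa$. The repeated block structure of $\bPi_u$ makes the rank of the perturbation at least proportional to $B$, which yields the $e^{-2B\phi}$ rate.
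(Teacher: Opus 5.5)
\textbf{Verdict.} There is a genuine gap, and it is in Step~1. Your opening decomposition and Step~2 match the paper's argument. The bias is governed by $\hatPi_{12}=\hatPi_2\bPi_2^\top\bPi_1\hatPi_1^\top$, and the noise term is handled conditionally on $\bX$; the paper uses a rank-one Gaussian quadratic-form bound plus the lower deviation of $\|\bX/\sqrt n\|_2$, both at level $n^\delta$.

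\textbf{The comparison point.} You propose, as in Theorem~\ref{thm:error-pi1-pi2}, to compare $\calL_1$ at a pair with $\hatPi_{12}\neq\bI$ against $\calL_1$ at the truth $(\bPi_1,\bPi_2)$. The pure-noise part of that difference contains the term $\bE_{22}$ of \eqref{eq:error-partition}, namely $\bZ^\top(\bP^\perp_{\hatPi_1,X}-\bM^\top\bP^\perp_{\hatPi_1,X}\bM)\bZ$ with $\bZ=\bSig^{-1/2}\bW^\ast$ and $\bM=\bSig^{-1/2}\hatPi_2\bPi_2^\top\bSig^{1/2}$. Its rank, trace and Frobenius norm are controlled only through $h_2=d_H(\hatPi_2,\bPi_2)$. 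Now $h_2$ can be as large as $KB$ even when the composite error is a single transposition per block ($h_{12}=2B$). Meanwhile the signal $\beta^2\|\bT_{\bPi_1,\bPi_2}\|^2$ is guaranteed only at the level $\snr\,\kappa(\bSig)\,h_{12}$.

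This mismatch is exactly why Theorem~\ref{thm:error-pi1-pi2} needs $t\ge h_2/\snr$ and $\snr=\Omega(K^\alpha)$. Even a centered Hanson--Wright bound only gives an exponent of order $B\min\{\snr^2k_{12}^2/k_2,\ \snr\,k_{12}\}$. That exponent deteriorates with $K$, so it cannot give the $K$-free rate $e^{-2B\phi(\snr)}$ under $\log\snr>1$ alone.

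The missing idea is that the selected pair must also beat the feasible block-diagonal pair $(\hatPi_1,\hatPi_1\bPi_1^\top\bPi_2)$. This competitor shares $\hatPi_1$, hence the projection $\bP^\perp_{\hatPi_1,X}$, which annihilates its signal $\tilX_{\hatPi_1}\beta$; and it has the correct composite. Against it the difference is $\bE_1-\bE_4$ with $\bE_4=\bZ^\top\bA_4\bZ$, where $\bA_4=\bM^\top\bSig^{1/2}\bigl(\hatPi_{12}\widetilde{\bSig^{-1}}\hatPi_{12}^\top-\widetilde{\bSig^{-1}}\bigr)\bSig^{1/2}\bM$. This matrix has rank at most $2h_{12}$ and trace $O(h_{12}\kappa(\bSig))$, so $\hatPi_2$ enters only through $h_{12}$.

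\textbf{The exponent.} The per-pair rate is asserted (``should give'') rather than derived, and the structural claim behind it is wrong. The loss difference is not a quadratic form in the joint Gaussian vector $(\bX,\bW^\ast)$, because $\bP^\perp_{\hatPi_1,X}$ depends on $\bX$ through $\tilX_{\hatPi_1}\tilX_{\hatPi_1}^\top/\|\tilX_{\hatPi_1}\|_2^2$. So an eigenvalue bound on a single moment generating function does not apply directly, and near-independence across blocks is not what produces the rate.

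The paper instead conditions on $\|\bT_{\bPi_1,\bPi_2}\|^2>t/\lambda_{\max}(\bSig)$ and combines three tail bounds:
\begin{itemize}
\item a Gaussian tail for the cross term in $\bE_1$;
\item Hanson--Wright for $\bE_4$, which needs $t\ge h_{12}/\snr$ so that the signal dominates $|\tr\bA_4|$;
\item Lemma~4 of \citet{pananjady2017linear} for the lower tail of $\|\bT_{\bPi_1,\bPi_2}\|^2$, which needs $t\le h_{12}$.
\end{itemize}
The number of blocks $B$ enters only through $h_{12}=k_{12}B$.

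The choice $t=h_{12}\log\snr/\snr$ is admissible precisely when $\log\snr\ge 1$. That is the real role of the hypothesis, not positivity of $\phi$, which holds whenever $\log\snr>0$ and $\log\snr\neq 1$. This choice gives exponents $h_{12}\log\snr$ and $h_{12}\min\{x^2,x\}$ with $x=\log\snr-1$. The inequality $\min\{x^2,x\}\ge x^2/(1+x)$ then yields $\phi(\snr)$. Finally the geometric sum $\sum_{k_{12}\ge 2}(Ke^{-cB\phi(\snr)})^{k_{12}}$ produces the $K^2e^{-2cB\phi(\snr)}$ form, consistent with your transposition heuristic.

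Strictly, each composite $\hatpi_{12}$ corresponds to $K!$ pairs $(\hatpi_1,\hatpi_2)$. Both your count and the paper's sum over $k_{12}$ suppress this multiplicity, which for fixed $K$ only affects the constant.
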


The proof is given in Appendix \ref{thm:error-betahat-proof}. This theorem shows that it is possible to consistently estimate the effect size $\beta$, without necessarily correctly estimating the permutation matrices. This provides a motivation for masking or anonymization procedures that preserve population-level association between sensitive variables without requiring individual-level alignment.

\textbf{Limitation of MLE.} Despite the theoretical guarantees established above, directly solving the maximum likelihood problem is computationally infeasible in practice. The optimization over $(\bPi_1,\bPi_2)$ requires searching over discrete permutation spaces and can be formulated as a variant of the Quadratic Assignment Problem (QAP), which is known to be NP-hard. Consequently, exact optimization becomes computationally prohibitive even for moderately sized problems. In particular, under the block restriction described earlier, the number of possible permutation pairs grows as $(K!)^2$ within each block, leading to an exponential increase in the search space as $K$ increases.

Beyond the combinatorial complexity, the likelihood surface itself poses additional challenges. The objective function is highly nonconvex due to the interaction between the permutation matrices and the covariance structure $\bSig$. In particular, the permutations enter the likelihood through quadratic forms involving $\bSig^{-1}$, which couples the effects of $(\bPi_1,\bPi_2)$ in a nonlinear manner. As a result, naive optimization strategies such as greedy search or local swap-based updates can easily become trapped in suboptimal configurations and may exhibit substantial sensitivity to initialization. These issues make direct likelihood-based estimation unreliable and computationally unstable for moderate-to-large problem sizes.

To address these challenges, we adopt a scalable approximation strategy. Rather than attempting to solve the discrete optimization problem directly, we introduce a Bayesian formulation in which the unknown permutation matrices and model parameters are treated as latent random variables. This perspective allows us to replace the combinatorial search with inference over a continuous relaxation of the permutation space. In the next section, we develop a variational inference framework that approximates the posterior distribution of the parameters while remaining computationally tractable. The resulting algorithm provides a practical approximation to the maximum likelihood solution and scales efficiently to moderate-to-large samples.

\section{Variational Inference for Joint Permutation and Parameter Estimation}\label{sec:vb}
In this section, we present our \textbf{RE}gression with \textbf{P}ermutation \textbf{A}lignment via Variational \textbf{I}nfe\textbf{R}ence (REPAIR) method for parameter estimation. As discussed in the previous section, direct maximization of the joint likelihood is computationally infeasible due to the combinatorial structure induced by the permutation matrices. To address this challenge, we adopt a Bayesian formulation of the problem. In order to obtain a computationally scalable solution, we employ a Variational Bayes approach to approximate the posterior distribution of the model parameters given the observed data.
We begin by revisiting the data-generating process (DGP) introduced in Section \ref{sec:dgp}. Throughout this section, we assume the same block structure for the permutation matrices, where the permutations are partitioned into $B$ blocks, each consisting of $K$ elements. 

For $\bS_i = (s_{i1},\ldots,s_{iK})$ denoting the latent domain indices for block $i$, let $\bY_i$ denote the corresponding outcome vector, $\bX_i$ the exposure vector, $\bW_i$ the residual latent-domain error vector, and $\beps_i$ a vector of iid errors with distribution $\calN(0,\tau^2)$. Let $\bpix$ and $\bpis$ denote the common block-wise permutation matrices that respectively reorder the exposure vector and the latent domain indices. Under this setup, the data-generating process (DGP) for block $i$ can be written as $\bY_i = \bpix \bX_i \beta + \bpis \bW_i + \beps_i$, for $i=1,\ldots,B$. 
We interpret $\bW_i$ as a latent process capturing dependence over the sampling domain. Let $\boldY_n \coloneq  \left(\bY_1^\top,\cdots, \bY_B^\top\right)^\top$ and $\boldX_n \coloneq  \left(\bX_1^\top,\cdots, \bX_B^\top\right)^\top$ denote the stacked outcome and exposure vectors across all blocks. Similarly, we define $\mathbb{W}_n \coloneqq \left(\bW_1^\top,\cdots, \bW_B^\top\right)^\top$ as the stacked latent error process and the iid error process vector as $\boldsymbol{\varepsilon}_n = \{\beps_1^\top,\cdots\beps_B^\top\}^\top$. We place a Gaussian process prior on $\mathbb{W}_n$ with mean function zero and stationary covariance kernel parameterized by $\bgam \coloneqq (\phi, \sigma^2)$. where $\phi$ controls the range of the correlation between two points in the domain and $\sigma^2$ controls the common variance for each component of the latent process. 

Under this specification, the conditional likelihood (given $\boldX_n$) for the parameter vector 
$\bth \coloneqq \left(\bpix, \bpis, \beta, \sigma^2, \tau^2, \phi, \mathbb{W}_n\right)^\top$ can be written as
{
\small
$$
L(\boldY_n\mid\bth, \boldX_n) = \left(2 \pi \tau^2\right)^{-\nicefrac{KB}{2}}
\prod_{i=1}^B 
\exp \left\{-\dfrac{1}{2 \tau^2}
\left(\bY_i-\bpix \bX_i \beta-\bpis \bW_i\right)^\top
\left(\bY_i-\bpix \bX_i \beta-\bpis \bW_i\right)\right\}.
$$}
Assuming independent priors across the components of $\bth$, the joint prior distribution can be factorized as
{
\small
$$
\boldsymbol{p}(\bth) \coloneqq p\left(\mathbb{W}_n \mid \sigma^2, \phi\right) \cdot p(\beta) \cdot p\left(\sigma^2\right)\cdot p(\phi)\cdot p\left(\tau^2\right) \cdot p(\bpix)\cdot p(\bpis).
$$}
Directly imposing a discrete prior on the permutation matrices $\bpix$ and $\bpis$ is computationally challenging due to the combinatorial nature of the permutation space. In particular, each permutation matrix can take one of $K!$ possible configurations, making posterior inference over this discrete space intractable even for moderately sized blocks. To obtain a computationally tractable formulation, we adopt a continuous relaxation of the permutation space.

This relaxation requires redefining the prior distribution over the permutation matrices. Since the variational approximation operates over continuous densities, it is natural to introduce a continuous prior over matrices that approximate permutation matrices. Ideally, such a prior should favor matrices that lie close to the set of valid permutations while still allowing efficient optimization in a continuous parameter space. Motivated by this consideration, and following the continuous relaxation ideas developed in \cite{maddison2016concrete} and later extended for permutation inference by \cite{linderman2018reparameterizing}, we place independent coordinate-wise Gaussian mixture priors on the entries of $\bpix$ and $\bpis$.

The mixture components are centered at $0$ and $1$, encouraging the matrix entries to concentrate near binary values while retaining differentiability and enabling gradient-based optimization. After sampling from this relaxed prior, the resulting matrices can be projected onto the Birkhoff polytope, the convex hull of all doubly stochastic matrices, and subsequently mapped to the nearest permutation matrix. This construction provides a principled continuous approximation to the discrete permutation space while remaining compatible with variational inference methods.

Based on this construction, we impose the following priors on the model parameters for suitable choices of hyperparameters. Let $\bSig^\ast_n$ denote the covariance matrix of the dependent process defined over the $n=KB$ domain points using the isotropic kernel with parameters $\bgam$ introduced earlier. The individual prior distributions are given by
\begin{equation}
    \begin{split}
        \mathbb{W}_n\mid\sigma^2, \phi &\sim \calN_n(\bzero, \bSig^\ast_n(\sigma^2,\phi))\\
        \beta & \sim \calN(0, \sigma^2_\beta)\\
        \sigma^2 & \sim \mathrm{Inv-Gamma}(a_1,b_1)\\
        \tau^2 & \sim \mathrm{Inv-Gamma}(a_2,b_2)\\
        \phi & \sim Unif(0, \sqrt{2})\\
        ((\bpix))_{mk} \coloneq x_{mk} & \sim \dfrac{1}{2}\calN(0,\eta^2_x) + \dfrac{1}{2}\calN(1,\eta^2_x) \\
        ((\bpis))_{mk} \coloneq s_{mk} & \sim \dfrac{1}{2}\calN(0,\eta^2_s) + \dfrac{1}{2}\calN(1,\eta^2_s). \\
    \end{split}
\end{equation}
Let $\bet$ denote the vector of hyperparameters used in the prior distribution of $\bth = (\mathbb{W}_n,\beta,\sigma^2,\phi,\tau^2,\bpix,\bpis)^\top$. We approximate the posterior distribution of the parameter vector $\bth$ using variational inference over a tractable family of distributions $q(\bth \mid \blam)$, where $\blam$ denotes the parameters governing the variational density. The approximate posterior is obtained by optimizing $\blam$ to minimize the Kullback–Leibler (KL) divergence between the variational density $q(\cdot \mid \blam)$ and the true posterior conditional on $\boldX_n$. Equivalently, this corresponds to maximizing the Evidence Lower BOund (ELBO):
\begin{equation}\label{eq:elbo-def}
    \mathscr{L}(\blam) \coloneq \E_{q(\cdot|\blam)}\left[\log L(\boldY_n,\bth\mid\boldX_n) - \log q(\bth\mid\blam,\boldX_n)\right].
\end{equation}

We assume a mean-field variational approximation for the target posterior distribution of $\bth$. Under this factorization, the results of \cite{ren2011variational} allow closed-form expressions for most of the variational densities corresponding to the model parameters. For parameters whose variational densities do not admit closed-form solutions, the corresponding variational densities are estimated by directly maximizing the ELBO.

Based on the prior structure and the assumed likelihood model, Gaussian variational families arise for the dependent random effect vector $\mathbb{W}_n$, parameterized by $(\bmu_W,\bSig_W)$, and for $\beta$, parameterized by $(\mu_{\lambda,\beta},\sigma^2_{\lambda,\beta})$. The variance parameters $\sigma^2$ and $\tau^2$ follow inverse-gamma variational families parameterized by $(\lambda_{a_1},\lambda_{b_1})$ and $(\lambda_{a_2},\lambda_{b_2})$, respectively. The scale parameter $\phi$ is associated with a more complicated variational density whose form, along with closed form solution of the parameters defining the above mentioned variational distributions are derived in Appendix \ref{appendix-vd-parameters}.

For the permutation matrices $\bpix$ and $\bpis$, we adopt the continuous relaxation variational family proposed by \cite{linderman2018reparameterizing}. Instead of placing a distribution directly on the discrete set of permutation matrices, this approach introduces a smooth transformation that generates matrices near the Birkhoff polytope, the space of all doubly stochastic matrices, and subsequently rounds them toward valid permutations.
Specifically, a random matrix $\bZ \in \R^{n \times n}$ is first generated with independent standard normal entries. A mean parameter matrix $\bM$ is then mapped to the Birkhoff polytope using the Sinkhorn--Knopp algorithm \citep{sinkhorn1967concerning}, which iteratively rescales the rows and columns of a matrix until convergence to a doubly stochastic matrix, producing $\widetilde{\bM}$. Next, a perturbed matrix is constructed as $\mathbf{\Psi} = \widetilde{\bM} + \bV \odot \bZ$, where $\bV$ controls the elementwise scale of the perturbation and $\odot$ denotes elementwise multiplication. The nearest permutation matrix $\text{round}(\mathbf{\Psi})$ is obtained using the Hungarian algorithm \citep{kuhn1955hungarian}. Finally, a temperature-controlled interpolation
$\bpi^\ast = \tau \mathbf{\Psi} + (1-\tau)\,\text{round}(\mathbf{\Psi})$
produces a relaxed permutation matrix. As $\tau \to 0$, the distribution concentrates on the vertices of the Birkhoff polytope, thereby recovering discrete permutation matrices.

For the parameters $\zeta \coloneq (\bM,\bV)$, this transformation induces a variational density $q_{\tau}(\bpi^\ast \mid \zeta)$ over matrices near the Birkhoff polytope while remaining amenable to gradient-based optimization. Letting $\bZ = g_{\tau}^{-1}(\bpi^\ast;\zeta)$ denote the inverse transformation, the corresponding density may be expressed as
\begin{equation}\label{eq:perm-vd}
    q_{\tau}(\bpi^\ast\mid\zeta)
=
\prod_{m=1}^n \prod_{n'=1}^n
\frac{1}{\tau v_{mn'}}
\,
\mathcal{N}\!\left(z_{mn'};0,1\right)
\,\mathbb{I}\{\bpi^\ast \in \mathcal{G}_{\tau}\},
\end{equation}
where $z_{mn'} = [g_{\tau}^{-1}(\bpi^\ast;\zeta)]_{mn'}$, $v_{mn'}$ denotes the $(m,n')$-th entry of $\bV$, and $\mathcal{G}_{\tau}$ denotes the image of the transformation $g_\tau(\cdot\mid \zeta)$. For the two permutation matrices $\bpix$ and $\bpis$, we assign the above variational density as $q_{\tau_X}(\cdot\mid \zeta_X)$ and $q_{\tau_S}(\cdot\mid \zeta_S)$, respectively, where $\zeta_Q = (\bM_Q,\bV_Q)$ for $Q\in \{X,S\}$.

Let $\blam \coloneq (\mu_{\lambda,\beta},\sigma^2_{\lambda,\beta},\bmu_W,\bSig_W, \lambda_{a_1},\lambda_{b_1},\lambda_{a_2},\lambda_{b_2},\,\zeta_X,\zeta_S)^\top$ 
denote the vector parameterizing the joint variational density of $\bth$. Under the mean-field assumption, the joint variational density decomposes as
\begin{equation}\label{eq:variational-decomposition}
\begin{split}
    q(\bth \mid\blam) \coloneq &\, q_W\left(\mathbb{W}_n \mid \bmu_W, \bSig_W\right) \, \cdot\,  q_\beta(\beta\mid\mu_{\lambda,\beta},\sigma^2_{\lambda,\beta}) \, \cdot\,  q_{\sigma^2}\left(\sigma^2\mid\lambda_{a_1},\lambda_{b_1}\right)\,\cdot\, q_{\tau^2}\left(\tau^2\mid\lambda_{a_2},\lambda_{b_2}\right) \\
    &   \,\cdot\, q_\phi(\phi)\,\cdot\, q_{\tau_X}(\bpix\mid\zeta_X)\,\cdot\, q_{\tau_S}(\bpis\mid\zeta_S).
\end{split}
\end{equation}

Except for the parameters associated with the permutation matrices, closed-form solutions are available for all other variational parameters (see Appendix \ref{appendix-vd-parameters}). The variational parameters corresponding to the permutation matrices do not admit closed-form updates and are therefore estimated through numerical optimization that maximizes the ELBO with respect to these parameters.

In addition to the parameters $\blam$, several auxiliary quantities arise in computing the closed-form updates and evaluating the ELBO for the permutation matrices. One such quantity is $\E_{q_\phi}\left[\bR(\phi)^{-1}\right]$, where the expectation is taken with respect to the variational density of $\phi$. Another set of quantities are 
$\blam_{\bpix} = (\bM_X^\ast,\bV^\ast_X)$ and $\blam_{\bpis} = (\bM_S^\ast,\bV^\ast_S)$, where $\bM_X^\ast \coloneq \E_{q_{\tau_X}}[\bpix]$, $\bV_X^\ast \coloneq \E_{q_{\tau_X}}[\bpix^\top \bpix]$, and similarly for $\bpis$. These expectations appear in multiple steps of the parameter updates and must therefore be estimated as part of the optimization procedure, and thus we include them within the parameter vector $\blam$ defined earlier.

The resulting maximization of the ELBO leads to a coupled system of equations, since the update of each parameter depends on the current values of the others. This motivates an iterative estimation procedure in which each parameter in $\blam$, together with the auxiliary quantities described above, is updated using the most recent values of the remaining parameters.

After each iteration, the global ELBO (whose expression is provided in Appendix \ref{appendix-global-elbo}) is evaluated, and the procedure is terminated once the incremental change in the ELBO falls below a user-specified threshold. The temperature parameters $\tau_X$ and $\tau_S$ are initialized at user-specified starting values and subsequently annealed according to an exponential decay schedule with lower bound $\tau_{\min} = 0.05$ and decay rate $\alpha = 0.995$. The decay is governed by a shared global step counter that increases after every inner optimization step associated with either permutation model, ensuring that the temperature decreases gradually throughout the optimization procedure rather than resetting within each outer iteration. Algorithm \ref{alg-parameter-est} summarizes the resulting estimation procedure.

\begin{algorithm}[htbp]
\caption{VB Algorithm to estimate the parameters of the variational distribution of $\bth$}
\label{alg-parameter-est}
\begin{algorithmic}
\small
\Require the hyperparameters $\bet$, learning rates $(l_X,l_S)$, threshold $\epsilon$ for the ELBO, initial values of the variational density parameters $\blam^{(0)}$. 
\Ensure Variational parameter estimates:
\newline
{
\footnotesize
$\blam^{(T)} = \left\{\mu^{(T)}_{\lambda,\beta}, \sigma^{2^{(T)}}_{\lambda,\beta},\bmu_W^{(T)} ,\bSig^{(T)}_W,\lambda^{(T)}_{a_1}, \lambda^{(T)}_{b_1}, \lambda^{(T)}_{a_2}, \lambda^{(T)}_{b_2}, \E^{(T)}[\bR(\phi)^{-1}],\zeta_X^{(T)},\zeta_S^{(T)},\bM_X^{\ast^{(T)}}, \bV_X^{\ast^{(T)}}, \bM_S^{\ast^{(T)}}, \bV_S^{\ast^{(T)}} \right\}$ where $T \coloneq T(\epsilon)$.} 
\While{$\text{ELBO}^{(t+1)} - \text{ELBO}^{(t)}>\epsilon$}
    \State \textbf{Step 1:} Update the distribution of $\beta \sim \calN\left(\mu^{(t)}_{\lambda,\beta},\sigma^{2^{(t)}}_{\lambda,\beta}\right)$ where\\
    \hspace{5mm} 
    $
    \sigma^{2^{(t)}}_{\lambda, \beta} = \left( \frac{\lambda^{(t-1)}_{a_2}}{\lambda^{(t-1)}_{b_2}} \sum_{i=1}^B\bX_i^\top \bV^{\ast^{(t-1)}}_X \bX_i + \frac{1}{\sigma_\beta^{2^{(t-1)}}} \right)^{-1}
    $ and \\
    \hspace{5mm}
    $
    \mu^{(t)}_{\lambda, \beta} = \sigma^{2^{(t)}}_{\lambda, \beta} \left( \frac{\lambda^{(t-1)}_{a_2}}{\lambda^{(t-1)}_{b_2}}\sum_{i=1}^B \bX_i^\top {\bM^{\ast^{(t-1)}}_X}^\top \left(\bY_i - \bM^{\ast^{(t-1)}}_S \bmu^{(t-1)}_{W_i}\right) \right).
    $
    \State \textbf{Step 2:} Update the distribution of $\mathbb{W}_n \sim \calN_n\left(\bmu_W^{(t)},\bSig^{(t)}_W\right)$ where\\ 
    \hspace{5mm}
    $\bSig^{(t)}_{W} = \left({\dfrac{\lambda^{(t-1)}_{a_2}}{\lambda^{(t-1)}_{b_2}}} \mathrm{diag}\left(\bV^{\ast^{(t-1)}}_S\right) + \dfrac{\lambda^{(t-1)}_{a_1}}{\lambda^{(t-1)}_{b_1}} \E^{(t-1)}[\bR(\phi)]^{-1} \right)^{-1}$ and \\
\hspace{5mm}
$\bmu^{(t)}_{W} = \bSig^{(t)}_{W} \left( {{\dfrac{\lambda^{(t-1)}_{a_2}}{\lambda^{(t-1)}_{b_2}}} \mathrm{diag}\left(\bM^{\ast^{(t-1)}}_S\right)^\top}\left(\mathbb{Y}_n - \mu^{(t)}_{\lambda,\beta}\mathrm{diag}\left(\bM^{\ast^{(t-1)}}_X \right)\mathbb{X}_B\right) \right)$
    \State \textbf{Step 3:} Update the distribution of $\sigma^2 \sim Inv-Gamma\left(\lambda^{(t)}_{a_1},\lambda^{(t)}_{b_1}\right)$
    where\\
    \hspace{5mm}
    $
    \lambda^{(t)}_{a_1} = \frac{KB}{2} + a_1$ and \\
    \hspace{5mm}
    $\lambda^{(t)}_{b_1} = \frac{1}{2} \left[ \tr\left( \E^{(t-1)}[\bR(\phi)^{-1}] \bSig_W^{(t)} \right) + \bmu_W^{(t)^\top} \E^{(t-1)}[\bR(\phi)^{-1}] \bmu_W \right] + b_1$
    \State \textbf{Step 4:} Update the distribution of $\tau^2 \sim Inv-Gamma\left(\lambda^{(t)}_{a_2},\lambda^{(t)}_{b_2}\right)$
    where\\
    \hspace{5mm}
    $
    \lambda^{(t)}_{a_2} = \frac{KB}{2} + a_2$ and \\
    \hspace{5mm} 
    $\lambda^{(t)}_{b_2} = \sum_{i = 1}^B \left(\bY_i - \mu^{(t)}_{\lambda, \beta} M^{\ast^{(t-1)}}_X \bX_i - {M^{\ast^{(t-1)}}_S}\bmu^{(t)}_{W_i}\right)^\top \left(\bY_i - \mu^{(t)}_{\lambda, \beta} M^{\ast^{(t-1)}}_X \bX_i - {M^{\ast^{(t-1)}}_S}\bmu^{(t)}_{W_i} \right)$\\
    \hspace{16mm}$
    + \mu^{2^{(t)}}_{\lambda, \beta} \sum_{i = 1}^B \bX_i^\top \left(\bV^{*^{(t-1)}}_X - {\bM^{\ast^{(t-1)}}_X}^\top \bM^{\ast^{(t-1)}}_X\right) \bX_i + \sigma^{2^{(t)}}_{\lambda, \beta} \sum_{i = 1}^B \bX_i^\top \bV^{\ast^{(t-1)}}_X \bX_i$\\
    \hspace{15mm}
    $ + \sum_{i = 1}^B \left[ \bmu^{(t)^\top}_{W_i} \left(\bV^{\ast^{(t-1)}}_S - {\bM^{\ast^{(t-1)}}_S}^\top \bM^{\ast^{(t-1)}}_S\right)\bmu^{(t)}_{W_i} + \tr\left(\bV^{\ast^{(t-1)}}_S \bSig^{(t)}_{W_i}\right) \right] + b_2$
    \State \textbf{Step 5:} Update the distribution on $\phi$ which is proportional to \\
    \hspace{5mm}
    $\left|\bR(\phi)\right|^{-\frac{1}{2}}\exp\left(-\frac{\lambda^{(t)}_{a_1}}{2\lambda^{(t)}_{b_1}} \left[\tr\left(\bR(\phi)^{-1} \bSig^{(t)}_W\right) + \bmu_W^{{(t)}^\top} \bR(\phi)^{-1} \bmu^{(t)}_W\right]\right)$ and compute\\
    \hspace{5mm} 
    $\E_{q_\phi}^{(t)}[\bR(\phi)^{-1}]$ using importance sampling.
    \State \textbf{Step 6:} Using the last 5 steps and the learning rates $l_X$ and $l_S$, minimize the ELBO\\
    \hspace{5mm} for $\bpix$ and $\bpis$ to get the parameters $\zeta_X^{(t)},\zeta_S^{(t)},\bM_X^{\ast^{(t)}}, \bV_X^{\ast^{(t)}}, \bM_S^{\ast^{(t)}}, \bV_S^{\ast^{(t)}}$.
\EndWhile
\end{algorithmic}
\end{algorithm}
Algorithm \ref{alg:perm-est} describes the procedure used to recover the permutation matrices from the estimated variational parameters. The matrices $\widehat{\bM}^\ast_X$ and $\widehat{\bM}^\ast_S$, which correspond to the posterior expectations of $\bpix$ and $\bpis$, may not themselves be valid permutation matrices. Therefore, we first project these matrices onto the Birkhoff polytope $\calB_K$. This projection is performed using the Sinkhorn–Knopp algorithm.
Once a doubly stochastic approximation is obtained, we recover the closest permutation matrix by solving a linear assignment problem. This step is carried out using the Hungarian algorithm. The same procedure is applied to both $\widehat{\bM}^\ast_X$ and $\widehat{\bM}^\ast_S$ to obtain the final estimates $\hatpi_X$ and $\hatpi_S$.
\begin{algorithm}[H]
\small
\caption{Algorithm for estimating the permutation matrices}
\label{alg:perm-est}
    \begin{algorithmic}
        \Require The estimated parameters $\widehat{\bM}^\ast_X = \widehat{\E}_{q_{\tau_X}}[\bpix]$ and $\widehat{\bM}^\ast_S = \widehat{\E}_{q_{\tau_S}}[\bpis]$
        \Ensure $\hatpi_X,\hatpi_S$
        \State \textbf{Step 1:} Project $\widehat{\bM}^\ast_X$ onto Birkhoff Polytope $\calB_n$ using Sinkhorn-Knopp algorithm to get $\widehat{\calB}_X$.
        \State \textbf{Step 2:} Estimate the nearest permutation matrix $\hatpi_X$ from $\widehat{\calB}_X$ using the Hungarian Algorithm. 
        \State \textbf{Step 3:} Repeat steps 1 and 2 similarly as above to get $\hatpi_S$.
    \end{algorithmic}
\end{algorithm}
The following section presents a detailed simulation study designed to evaluate the finite-sample performance of the proposed REPAIR method.
\section{Simulation Studies}\label{sec:simulation}

\textbf{Simulation Regimes.} We conducted an extensive simulation study to assess the finite-sample performance of the proposed methodology REPAIR under varying levels of domain complexity and permutation misalignment. The simulation design varies two key factors that govern the difficulty of the estimation problem: (i) the granularity of the sampling latent domain and (ii) SNR of the exposure effect. By systematically varying these quantities while holding all other components of the data-generating mechanism fixed, the experiments allow us to evaluate both the statistical accuracy and robustness of the proposed variational Bayes based REPAIR estimator across a range of realistic scenarios. In addition, the design facilitates comparisons with competing methods under identical data-generating conditions. For each experiment, we considered several values of the $(K,B)$ pairs where the number of block regions $B \in \{49, 81, 100, 121\}$, with each region containing $k \in \{6,8,10,12,20\}$ latent locations. This yields total sample sizes ranging from $KB = 294$ to $2420$. The domain values were generated by partitioning the domain into a $\sqrt{B} \times \sqrt{B}$ grid and then randomly sampling $K$ locations uniformly within each region. The 1-D exposure vector $\mathbb{X}_n$ was independently sampled from a standard normal distribution, i.e., $X_j \sim \calN(0,1)$ for each $j = 1,\ldots,n$.

The true domain covariance structure $\bSig^\ast_n = \sigma^2 \bR(\phi)$ followed an exponential covariance kernel with variance parameter $\sigma^2 = 5$, range parameter $\phi = 0.5$. The iid noise variance was set at $\tau^2 = 0.5$. The spatial random effect $\mathbb{W}_n$ was then simulated from $\calN_n(\bzero, \bSig^\ast_n)$, and the iid noise vector $\boldsymbol{\varepsilon}_n$ was generated from $\calN(0,\tau^2)$.

For every $(K,B)$ pair, the Hamming distances $d_H(\bpix,\bI_K)$ and $d_H(\bpis,\bI_K)$ were randomly selected from $\{1,\cdots,K\}$ and kept fixed across replicates. These distances control the amount of misalignment induced by the permutation matrices in the data-generating process. To examine the effect of signal strength, we considered two values of $\beta \in \{2,8\}$ corresponding to different SNR regimes. Finally, the outcome vector $\mathbb{Y}_n$ was generated according to the structural equation $\mathbb{Y}_n = \bPix \mathbb{X}_n \beta + \bPis \mathbb{W}_n + \boldsymbol{\varepsilon}_n$ where $\bPi_X = \text{bdiag}(\bpix)$ and $\bPi_S = \text{bdiag}(\bpis)$. For each configuration of parameters, we generated 100 independent replicates. The objective of this simulation study is to examine how the proposed estimation procedure performs across varying values of $(K, B)$ and SNR levels.

\textbf{Methods Considered.} For model fitting, we assume that the covariance kernel is correctly specified; in particular, we fit the data using the exponential kernel, and estimate the parameters of it accordingly. We compare three estimation strategies on the simulated datasets:  
1) \textbf{FullGP}, an oracle method in which the true permutation matrices are assumed known and a standard spatial Gaussian process regression with an exponential covariance kernel is fitted. This benchmark provides a gold-standard reference for evaluating the performance of our approach.  
2) \textbf{ArealGP}, where both the exposure and the outcome are aggregated at the block level, and a GLS regression is performed on the aggregated values to estimate the parameters,  
3) Our proposed \textbf{REPAIR} method, implemented as described in Algorithms ~\ref{alg-parameter-est} and ~\ref{alg:perm-est}. For the FullGP and ArealGP methods, Likelihood based optimization was used and the MLEs have been computed.

\textbf{Simulation Results.} 
Here, we are mostly interested in how the methods have successfully recovered the effect size parameter $\beta$ and the unknown permutation matrices $\bpix,\bpis$ under several choices of $(K,B)$ pairs and the different SNR values. Although we have reported the estimates of the variance and covariance parameters in the supplementary materials. 
\begin{figure}[ht]
    \centering
    \includegraphics[width=\textwidth]{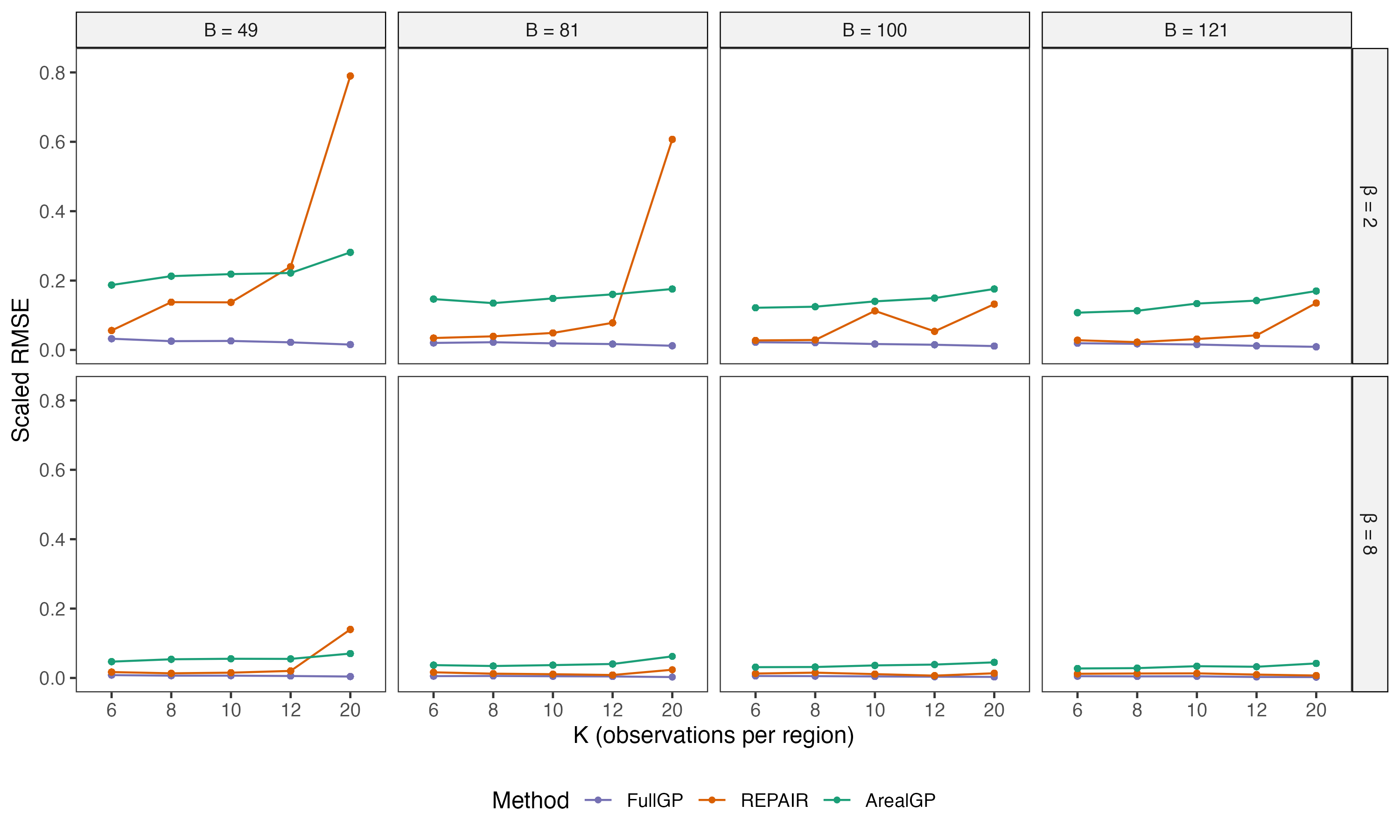}
    \caption{Scaled RMSE of $\hat{\beta}$ across simulation settings. Columns correspond to the number of regions $B$ and the horizontal axis shows the number of observations per region $K$. Results are reported for two SNR regimes, $\beta=2$ and $\beta=8$, comparing FullGP, ArealGP, and REPAIR.}
    \label{fig:beta_rmse_lines}
\end{figure}

Figure \ref{fig:beta_rmse_lines} summarizes the estimation accuracy for the regression coefficient $\beta$ across the different simulation configurations. Compared with the oracle FullGP method, the proposed REPAIR approach consistently performs better than the competing ArealGP method in terms of estimation accuracy, with its performance improving as the number of blocks $B$ increases. 
For the REPAIR method, however, we observe that the RMSE tends to increase as the block size $K$ becomes larger. This behavior reflects the increasing difficulty of recovering $\beta$ when the permutation structure becomes more complex for larger values of $K$. This phenomenon is consistent with the theoretical insight established in Theorem \ref{thm:error-betahat}, which shows that the probability of obtaining small estimation error for $\beta$ can deteriorate when $K$ grows relative to $B$. 
Nevertheless, when the number of blocks $B$ is sufficiently large, REPAIR is able to recover the effect size accurately even for larger values of $K$, and continues to outperform the ArealGP method in most settings. Furthermore, the scaled RMSE values clearly indicate that the SNR plays an important role in determining estimation accuracy. For high SNR ($\beta = 8$), the performance of REPAIR is nearly indistinguishable from the oracle FullGP method. In contrast, when the signal is weaker ($\beta = 2$), the estimation problem becomes more challenging and the gap between REPAIR and the oracle method becomes more pronounced. These empirical observations align closely with the theoretical results developed earlier. One area where REPAIR appears somewhat less accurate is in estimation of the variance parameters, for which it has higher RMSE than the competing methods; see Supplement \ref{suppsec:variance-estimates}. This likely reflects the limitations of the mean-field variational approximation, which seems to be more effective for point estimation of $\beta$ than for recovering nuisance variance components.

\begin{figure}[ht]
\centering
\includegraphics[width=\textwidth]{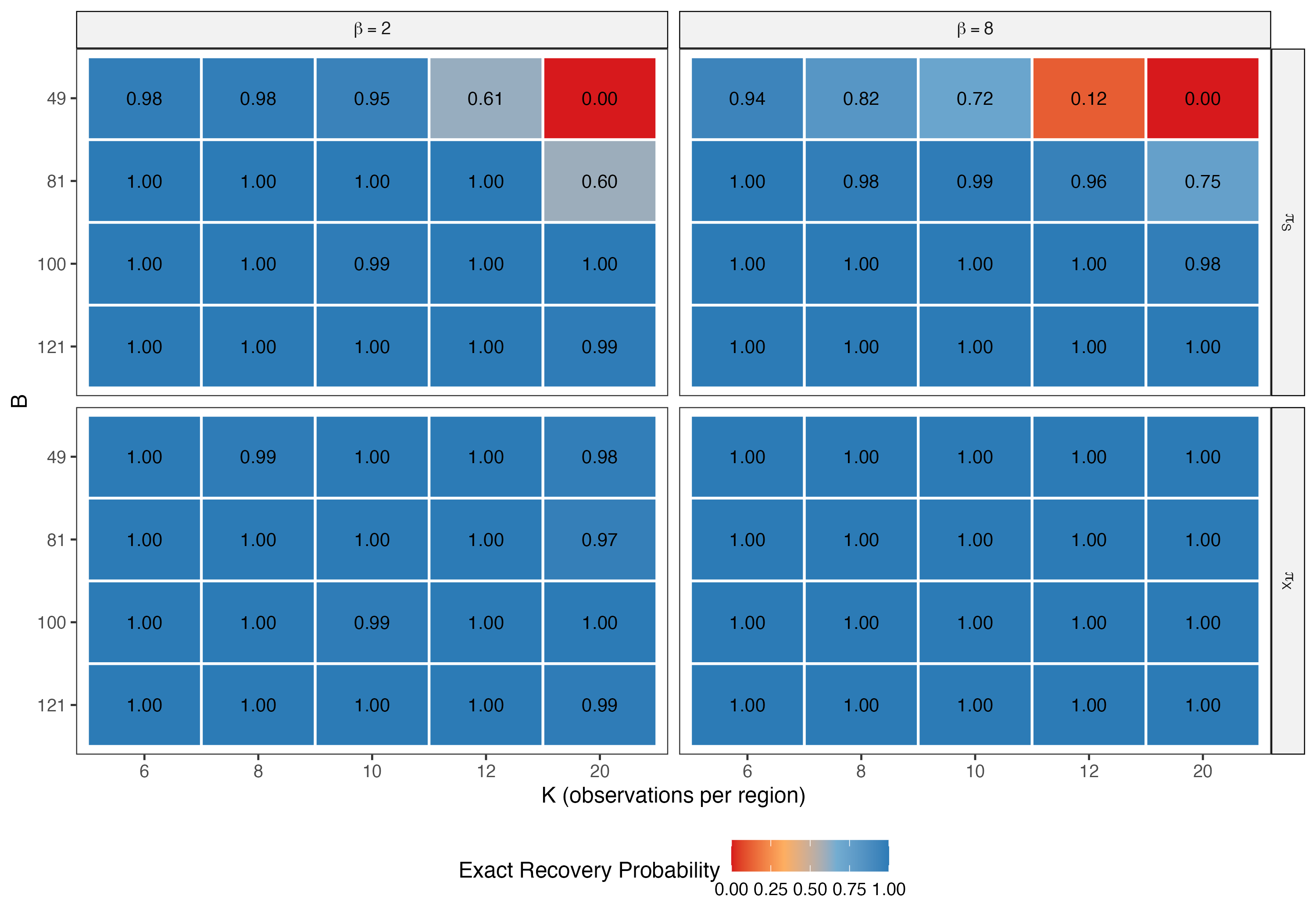}
\caption{Permutation recovery performance of the REPAIR method across simulation settings. 
Cells show the estimated recovery probability for the permutation matrices $\pi_S$ and $\pi_X$ across different values of the number of blocks $B$ and observations per block $K$, under two signal regimes ($\beta = 2$ and $\beta = 8$).}
\label{fig:perm_recovery}
\end{figure}
Figure \ref{fig:perm_recovery} summarizes the permutation recovery performance of the REPAIR method for the two latent permutation matrices $\bpix$ and $\bpis$ across the different simulation settings. Overall, the recovery probabilities are broadly similar across the two signal regimes and across most combinations of $(K,B)$. While the theoretical results characterize recovery probabilities averaged over all possible values of the Hamming distance between the true and identity permutations, it is computationally infeasible in practice to enumerate all such configurations in a simulation study. Consequently, the empirical results shown here should be interpreted as representative realizations of the underlying theoretical behavior rather than a direct numerical validation of the theoretical bounds.

An interesting pattern that emerges from Figure \ref{fig:perm_recovery} is that the exposure permutation matrix $\bpix$ is typically recovered with higher probability than the latent-domain permutation matrix $\bpis$. In several regimes, particularly when the block size $K$ is relatively large, the recovery probability for $\bpis$ decreases noticeably even when $\bpix$ continues to be identified reliably. Despite this partial recovery of the latent permutation structure, the regression coefficient $\beta$ is still estimated accurately, as shown in Figure \ref{fig:beta_rmse_lines}. 

This observation highlights an important strength of the proposed REPAIR method: accurate estimation of the effect size does not require perfect recovery of both permutation matrices. In particular, reliable identification of the exposure permutation $\bpix$ appears sufficient for stable estimation of $\beta$, even when the latent-domain permutation $\bpis$ is only partially recovered. From an applied perspective, this property can be interpreted as providing a degree of privacy preservation, since the underlying permutation structure may remain partially obscured while still allowing accurate inference on the effect size of interest.

\section{Real Data Analysis}\label{sec:data}

We illustrate the proposed methodology using the \textit{Meuse} soil dataset from the \textrm{sp} package in \textbf{R}; see \citet{pebesma2012package}. The dataset records locations of topsoil heavy metal concentrations, together with soil and landscape variables, collected on the floodplain of the river Meuse near the village of Stein in the Netherlands. The measured heavy metals include cadmium (Cd), copper (Cu), lead (Pb), and zinc (Zn). A distinguishing feature of this dataset is the presence of the river Meuse itself as a naturally occurring geographic boundary, which induces substantial spatial heterogeneity across the floodplain. From an environmental standpoint, this is of direct interest because the floodplain soils are used for agriculture, so elevated heavy metal concentrations may affect crops consumed by humans and livestock. In our analysis, we focus on \textit{zinc concentration} as the outcome and \textit{elevation} as the exposure, with the goal of studying how the latent spatial structure of the floodplain is reflected in the exposure--outcome relationship.

We analyze the transformed response $Y=\log(1+\text{zinc})$, center both the exposure and the outcome, and rescale the spatial coordinates to the unit square for numerical stability. To construct a doubly-unlinked version of the data, we randomly remove 5 observations from the original 155 locations, leaving 150 sites. The retained locations are then ordered by the first spatial coordinate and partitioned into contiguous blocks. We consider two blocking schemes: (a) a coarser partition with $15$ blocks of size $10$, and (b) a finer partition with $30$ blocks of size $5$. Within each block, the exposure values and spatial coordinates are permuted, thereby inducing the doubly-unlinked structure. Figure~\ref{fig:meuse_partition} displays the resulting partitions.

\begin{figure}[t]
    \centering
    \includegraphics[width=\textwidth]{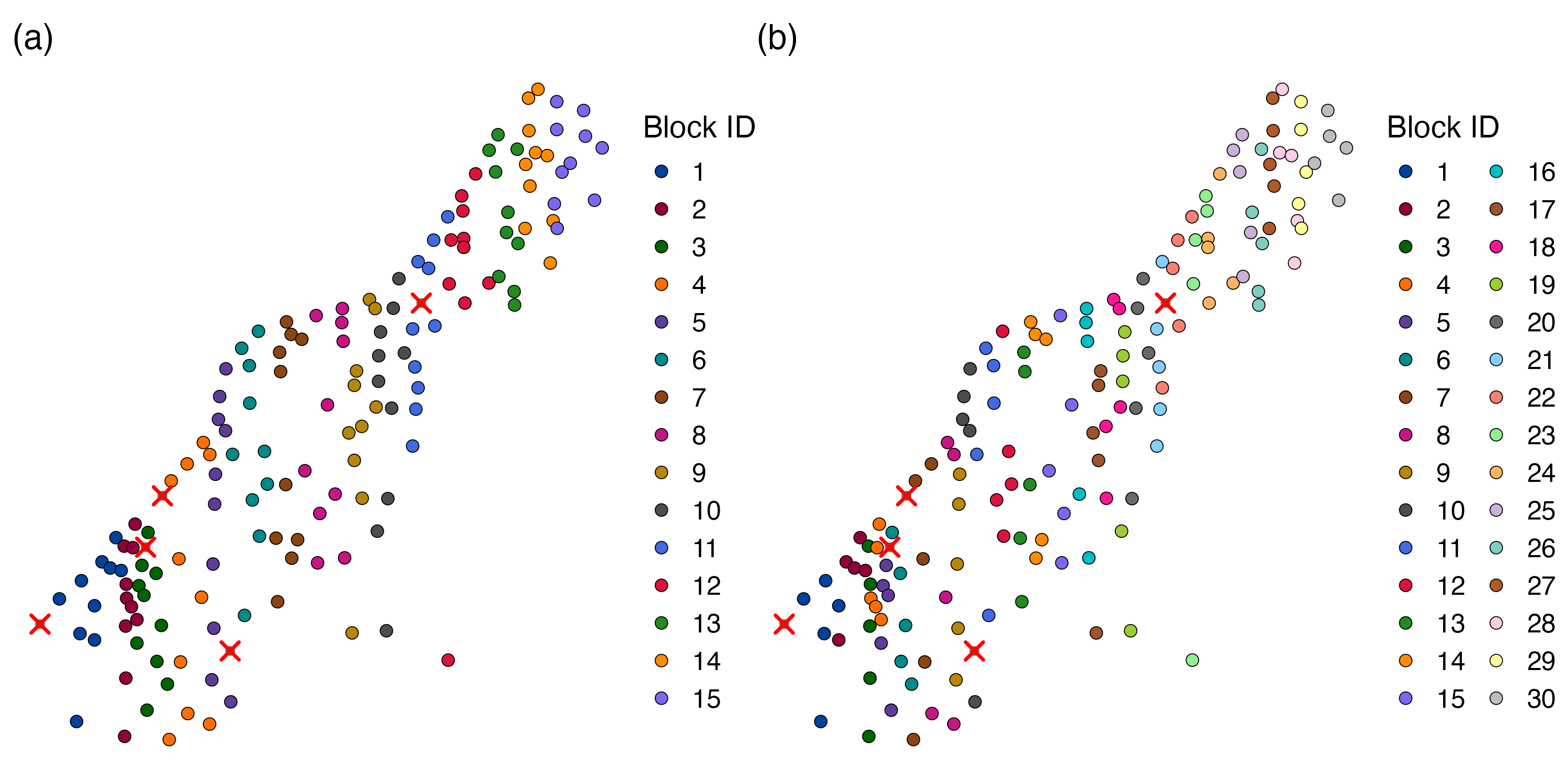}
    \caption{Partitions for the Meuse analysis after dropping five observations: (a) $15\times 10$ and (b) $30\times 5$. Colored points denote block memberships, and red crosses indicate the removed observations.}
    \label{fig:meuse_partition}
\end{figure}

We compare {ArealGP}, {REPAIR}, and the fully linked benchmark {FullGP} in effect size estimation. Across both blocking schemes, all three methods recover a negative regression effect of elevation on zinc concentration. Under the $15\times 10$ partition, the estimated regression coefficients are $-0.2926$ for FullGP, $-0.2317$ for ArealGP, and $-0.2102$ for REPAIR. Under the finer $30\times 5$ partition, the corresponding estimates are $-0.2926$, $-0.3965$, and $-0.2831$. Thus, the direction of the estimated exposure effect is stable across methods and across blocking schemes: higher elevation is associated with lower zinc concentration.

A more informative comparison is given by the proximity of each unlinked-data estimator to the benchmark. Under the finer $30\times 5$ partition, the {REPAIR} estimate $-0.2831$ is quite close to the FullGP estimate $-0.2926$, whereas under the coarser $15\times 10$ partition the estimate $-0.2102$ is farther away. By contrast, the {ArealGP} benchmark is more sensitive to the imposed partition, moving from $-0.2317$ under the coarser scheme to $-0.3965$ under the finer one.

\begin{figure}[ht]
    \centering
    \includegraphics[width=\textwidth]{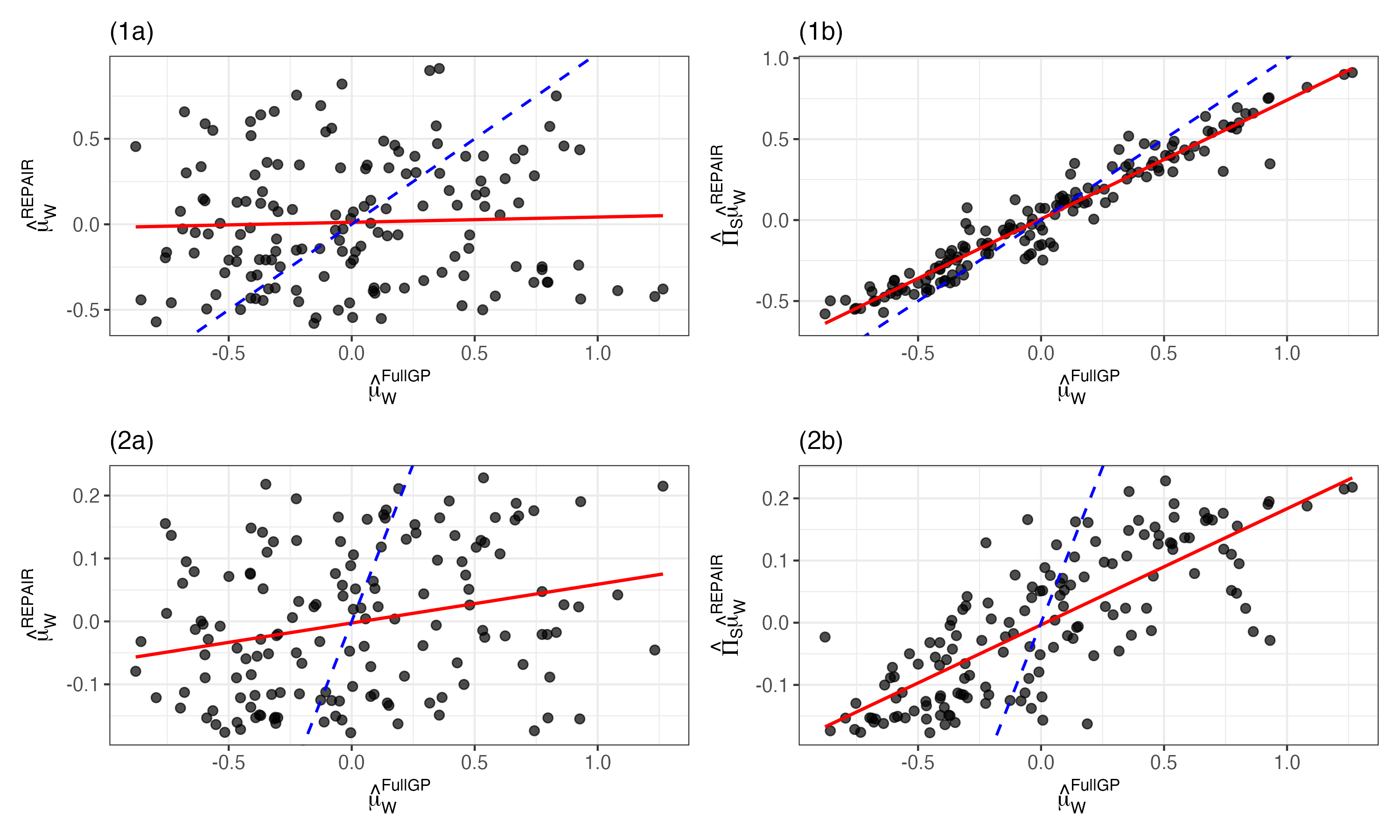}
    \caption{Comparison of latent surface $W$ estimates from REPAIR and the FullGP benchmark under the two blocking schemes. Panels (1a) and (2a) plot $\hat{\bmu}_W^{\mathrm{REPAIR}}$ against $\hat{\bmu}_W^{\mathrm{FullGP}}$ for the $30\times 5$ and $15\times 10$ partitions, respectively. Panels (1b) and (2b) plot $\widehat{\bPi}_S \hat{\bmu}_W^{\mathrm{REPAIR}}$ against $\hat{\bmu}_W^{\mathrm{FullGP}}$. The dashed blue line is the $45^\circ$ reference line, and the solid red line is the least-squares fit.}
    \label{fig:latent_surface_compare}
\end{figure}

In the $30\times 5$ setup, REPAIR recovers the within-block permutations essentially perfectly for both the exposure and spatial coordinates, whereas in the $15\times 10$ setup the recovery is only partial by the end of the optimization. This is consistent with the regression results: the finer partition appears to make the latent alignment problem more tractable, allowing REPAIR to recover an exposure effect much closer to the FullGP benchmark. Figure~\ref{fig:latent_surface_compare} reinforces this point at the level of the latent spatial surface. In the $30\times 5$ case, the permutation-corrected REPAIR estimate, $\widehat{\bPi}_S \hat{\bmu}_W^{\mathrm{REPAIR}}$, aligns very closely with $\hat{\bmu}_W^{\mathrm{FullGP}}$, with the points lying near the $45^\circ$ line. Here $\hat{\bmu}_W$ denotes the mean of the variational distribution for $\mathbb{W}_n$. This indicates that once the estimated spatial permutation is accounted for, REPAIR is able to recover the latent spatial field with high accuracy in the finer blocking regime. By contrast, in the $15\times 10$ case the agreement is visibly weaker: the scatter is more diffuse and departs more substantially from the identity line. This deterioration mirrors the weaker recovery of the regression effect under the coarser partition and suggests that the latent alignment problem becomes substantially harder as the within-block size grows.

Taken together, these results highlight the practical importance of keeping the block size sufficiently small, or equivalently the number of blocks $B$ sufficiently large relative to the within-block permutation complexity. When this condition is favorable, as in the $30\times 5$ setting, REPAIR can recover both the regression signal and the latent spatial surface quite accurately. When the blocks are too coarse, as in the $15\times 10$ setting, permutation recovery degrades, and this loss propagates to both surface recovery and effect estimation.

Overall, the Meuse analysis illustrates two key points. First, despite the presence of latent within-block shuffling, the exposure effect remains recoverable and can be estimated close to the oracle benchmark when the number of blocks is sufficiently large relative to the block size. Second, this setting naturally reflects a privacy-preserving regime, since the original alignment of exposure and spatial information is masked, yet scientifically meaningful inference is still possible.
\section{Discussion}\label{sec:discussion}

This paper introduces and studies the doubly-unlinked regression regime in dependent data, in which both the covariate–response correspondence and the response–domain correspondence are simultaneously unknown. We establish theoretical conditions governing the feasibility of permutation and regression recovery, show that consistent estimation of the effect size $\beta$ is achievable under strictly weaker signal-to-noise conditions than those required for exact permutation recovery, and develop REPAIR, a computationally tractable variational Bayes algorithm for joint inference on the regression parameter, the latent permutations, and the covariance structure of the latent process.  These developments lead to foundational contributions in theory and methods of unlinked regression to dependent-data settings. Throughout, we have modeled the latent domain as a spatial or temporal continuum, but the same inferential challenge arises when observations are indexed by nodes of a network: the latent domain is then a graph, dependence is encoded by graph structure rather than a covariance kernel, and the two broken links correspond precisely to the node-alignment problems studied in the graph matching literature \citep{arroyo2021maximum, lyzinski2014seeded, fishkind2019seeded, dawn2025covariate}. Extending the theoretical and algorithmic framework developed here to graph-indexed latent domains is therefore a natural and principled next step.

Our applied analysis on the Meuse soil dataset was intended as a proof of concept rather than a substantive scientific claim. The dataset is well-understood, fully observed, and publicly available, making it well-suited for evaluating whether REPAIR can recover a known signal under artificially induced doubly-unlinked structure. The results confirm that it can, particularly when the number of blocks is large relative to the within-block size. A natural next step would be to apply this framework to the motivating settings described in the introduction, privacy-sensitive digital mental health studies and spatial transcriptomics, but these require access to sensitive data whose release is subject to broader institutional and regulatory considerations. We view this work as initiating that discussion, establishing the statistical feasibility of inference in the doubly-unlinked regime before the data infrastructure to support such analyses is fully in place.

Several future directions remain open. Throughout this paper, we restrict attention to a scalar covariate $X$; extending the framework to multivariate $\bX \in \R^p$
is an immediate priority. Another important direction is to relax the assumption that the same permutation matrix is repeated across all blocks. A third direction concerns the block structure itself: the current formulation preserves coarse-scale ordering while permuting within local neighborhoods, but an equally natural formulation inverts this, preserving fine-scale local structure while permuting the blocks themselves,  a regime that may better reflect certain anonymization mechanisms or distributed data collection pipelines. Characterizing identifiability, recoverability, and the appropriate SNR conditions under this alternative structure are important problems for future work.

\section{Software}
The \texttt{REPAIR} software and vignette are available at \url{https://github.com/isayantan/SpatialReg-Unlinked}.

\section{Acknowledgments}\label{sec:acknowledge}

The authors thank Dr. Debarghya Mukherjee (Department of Statistics, Boston University) for helpful discussions during the early stages of problem formulation. Portions of this research were conducted using the advanced computing resources of the Texas A\&M Department of Statistics Arseven Computing Cluster. The authors also acknowledge the use of Claude and GitHub Copilot for coding assistance and debugging, and Gemini Nano Banana for visualization aid.

\bibliography{bibliography.bib}

\newpage
\renewcommand{\thesection}{S\arabic{section}}
\setcounter{section}{0}

\renewcommand{\thesubsection}{\thesection.\arabic{subsection}}

\makeatletter
\@addtoreset{lemma}{subsection}
\makeatother
\renewcommand{\thelemma}{\thesubsection.\arabic{lemma}}
\setcounter{lemma}{0}

\phantomsection\label{supplementary-material}
\bigskip

\begin{center}
{\large\bf SUPPLEMENTARY MATERIAL}
\end{center}

\section{Details of Variational Inference}
Variational inference is a general approach for approximating a complicated target distribution by a simpler, tractable family of distributions. In Bayesian problems, the target is typically the posterior distribution of latent variables and model parameters given the observed data. When this posterior is analytically unavailable, direct computation is infeasible, and exact sampling-based methods may be computationally burdensome, variational inference replaces the original problem by an optimization problem. More specifically, one selects a family of candidate densities $\mathcal{Q}$ and then chooses the member $q^\ast \in \mathcal{Q}$ that is closest to the true posterior in Kullback--Leibler divergence. Equivalently, this can be formulated as maximizing the evidence lower bound (ELBO), which balances fidelity to the joint model with the entropy of the approximating distribution. The resulting approximation is often much faster to compute than Markov chain Monte Carlo, while still retaining a probabilistic characterization of uncertainty.

A particularly useful feature of variational inference is that it allows one to impose structural simplifications that make high-dimensional problems tractable. A common choice is the mean-field approximation, under which the variational density is factorized across groups of latent variables and parameters. This reduces posterior approximation to a sequence of lower-dimensional optimization steps, often yielding closed-form coordinate updates or efficient gradient-based optimization schemes. In models with latent Gaussian structure, conjugate priors, or exponential-family components, these updates can frequently be written explicitly, which leads to scalable inference even when the ambient dimension is large.

Variational inference is especially well suited to our framework because the doubly-unlinked regression problem introduces several latent objects simultaneously: the regression effect, the latent spatial process, covariance parameters, and two unknown permutation structures. The exact posterior over these quantities is highly coupled and combinatorial, since the permutations interact nonlinearly with both the regression and spatial dependence components. As a result, direct posterior computation is intractable, and naive enumeration over permutation configurations is impossible even for moderate block sizes. Variational inference provides a principled way to approximate this joint posterior while preserving the main dependence structure required for inference.

In our setting, variational inference serves two roles. First, it provides a computationally feasible mechanism for recovering the latent alignment structure induced by the unlinked observations. Second, it yields approximate posterior summaries for the scientific parameters of interest, including the regression coefficient and latent spatial surface, in a way that naturally propagates uncertainty from the unknown permutations. This is particularly important because the latent shuffling is not merely a nuisance computational feature; it is central to the statistical formulation of the problem. By combining a tractable variational family with blockwise structure, we obtain an inference procedure that is both scalable and well adapted to the privacy-preserving nature of the doubly-unlinked framework.
\subsection{Variational Density of Parameters}\label{appendix-vd-parameters}
As discussed in the Section \ref{sec:vb}, we assume a mean field approximation on the class of variational density of our target parameter vector. This provides a closed form solution to the densities which is given by the following theorem:
\begin{theorem}[\cite{ren2011variational}]\label{thm:optimal-vd}
     For a class of variational densities on $\bth = \left(\bth_1^\top,\cdots,\bth_k^\top\right)^\top$ given by the family $\calQ = \left\{q(\bth):q(\bth) = \prod_{l=1}^kq_l(\bth_i)\right\}$, the optimal $q^\ast_l(\bth_l)$ that maximizes the ELBO is given by:
    $$
    q^\ast_l(\bth_l) = \dfrac{\exp\{\E_{u \neq l}\log L(\mathbb{Y}_n,\bth|\mathbb{X}_n)\}}{\int \exp\{\E_{u \neq l}\log L(\mathbb{Y}_n,\bth|\mathbb{X}_n)\}\mathrm{d}\bth_l}
    $$
\end{theorem}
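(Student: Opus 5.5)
The plan is the standard coordinate-ascent argument for mean-field variational inference. Fix an index $l$ and hold every other factor $q_u$, $u\neq l$, fixed. We show that the ELBO \eqref{eq:elbo-def}, viewed as a functional of $q_l$ alone, equals a negative Kullback--Leibler divergence plus a constant that does not depend on $q_l$. Gibbs' inequality then identifies the maximizer. Throughout, the statement is read as characterizing the optimal $l$-th factor given the others; this is the sense in which it is used in Algorithm~\ref{alg-parameter-est}.

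\textbf{Step 1: isolate $q_l$.} Insert $q(\bth)=\prod_{u} q_u(\bth_u)$ into \eqref{eq:elbo-def}. Write the entropy term $-\E_q\log q$ as $-\E_{q_l}\log q_l-\sum_{u\neq l}\E_{q_u}\log q_u$; the second sum does not involve $q_l$. For the expected log joint term, apply Fubini to write
\[
\E_q\bigl[\log L(\mathbb{Y}_n,\bth\mid\mathbb{X}_n)\bigr]=\int q_l(\bth_l)\,\E_{u\neq l}\bigl[\log L(\mathbb{Y}_n,\bth\mid\mathbb{X}_n)\bigr]\,\mathrm{d}\bth_l .
\]
Here $\E_{u\neq l}$ denotes expectation over $\prod_{u\neq l}q_u$ with $\bth_l$ held fixed.

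\textbf{Step 2: complete to a KL divergence.} Define
\[
\tilde p_l(\bth_l)=\frac{\exp\{\E_{u\neq l}\log L(\mathbb{Y}_n,\bth\mid\mathbb{X}_n)\}}{Z_l},
\]
where $Z_l$ is the normalizing integral appearing in the statement. Then
\[
\E_q\log L-\E_{q_l}\log q_l=\int q_l\log\frac{\tilde p_l}{q_l}\,\mathrm{d}\bth_l+\log Z_l=-\mathrm{KL}(q_l\,\|\,\tilde p_l)+\log Z_l .
\]
Consequently
\[
\mathscr{L}(\blam)=-\mathrm{KL}(q_l\,\|\,\tilde p_l)+C_{-l},
\]
where $C_{-l}$ collects $\log Z_l$ and the entropies of the other factors, none of which depend on $q_l$.

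\textbf{Step 3: conclude.} By Gibbs' inequality, $\mathrm{KL}(q_l\,\|\,\tilde p_l)\ge 0$, with equality if and only if $q_l=\tilde p_l$ almost everywhere. Hence the ELBO is maximized over $q_l$ exactly at $q_l^\ast=\tilde p_l$, which is the displayed formula. If the family $\calQ$ restricts $q_l$ further, the same identity shows that the optimum is the KL-projection of $\tilde p_l$ onto that restricted family. This is how the non-closed-form permutation factors are handled in the paper.

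\textbf{Main obstacle.} The only non-routine step is justifying the measure-theoretic manipulations, and I would treat it first. This means checking that $\E_{u\neq l}\log L$ is finite for almost every $\bth_l$, that $0<Z_l<\infty$, and that Fubini applies in Step 1. For this model I would verify these by observing that $\log L$ is a polynomial in $\beta$, $\mathbb{W}_n$ and the permutation entries, plus $\log$-terms in $\sigma^2$, $\tau^2$ and $\phi$. Its expectations under Gaussian and inverse-gamma factors are therefore finite once those factors have finite second moments and finite $\E[1/\sigma^2]$ and $\E[1/\tau^2]$. Integrability of $\exp\{\cdot\}$ in $\bth_l$ follows from the Gaussian, inverse-gamma or compact-support prior terms, which dominate the tails.
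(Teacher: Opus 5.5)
Your proof is correct. Fixing the other factors, rewriting the ELBO as $-\mathrm{KL}(q_l\,\|\,\tilde p_l)$ plus a $q_l$-free constant, and invoking Gibbs' inequality is the standard derivation of the coordinate-wise mean-field optimum.

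The paper gives no proof of its own. It imports the result from \cite{ren2011variational} and applies it factor by factor, so there is no alternative route to compare against. Your coordinate-wise reading matches how Algorithm~\ref{alg-parameter-est} uses the result. Your KL-projection remark is in fact what justifies the paper's separate numerical treatment of $\bpix$ and $\bpis$, whose factors are confined to the relaxed-permutation family and are therefore not of the displayed form.

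Two minor refinements to your integrability check:
\begin{itemize}
\item The log of the Gaussian-mixture prior on the permutation entries is not polynomial. It is, however, bounded above by a constant and below by a negative quadratic, which is all you need.
\item For $\phi$, compact support alone does not give $Z_\phi<\infty$. You need $|\bR(\phi)|^{-1/2}$ to be bounded on $(0,\sqrt{2})$; the exponential factor is at most $1$. This holds for the exponential kernel with range $\phi$, because $\bR(\phi)\to\bI_n$ as $\phi\to 0$ and $\bR(\phi)$ is positive definite and continuous on $(0,\sqrt{2}]$.
\end{itemize}
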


\textbf{Joint Likelihood.} The above theorem requires the computation of conditional joint likelihood $L(\mathbb{Y}_n,\bth\mid\mathbb{X}_n)$ for $n = KB$, which is given as:
\begin{equation}\label{eq:joint-likelihood}
    \begin{split}
&\log L(\mathbb{Y}_n,\bth\mid\mathbb{X}_n) \\
= & \log L(\mathbb{Y}_n\mid\bth,\mathbb{X}_n) + \log p(\bth)\\
= & - \frac{n}{2} \log2 \pi - \frac{n}{2}\log\tau^2-\frac{1}{2 \tau^2} \sum_{i=1}^B\left(\bY_i-\bpix \bX_i \beta -\bpis \bW_i\right)^{\top}\left(\bY_i-\bpix \bX_i \beta -\bpis \bW_i\right) \\
&- \frac{n}{2} \log 2 \pi-\frac{1}{2} \log |\bSig^\ast_n| - \frac{1}{2}\mathbb{W}_n^\top \bSig_n^{\ast-1} \mathbb{W}_n\\
& -\dfrac{1}{2}\log 2\pi\sigma^2_\beta-\dfrac{\beta^2}{2 \sigma_\beta^2}-C_\phi \\
&+a_1 \log b_1-\log \Gamma\left(a_1\right)-\left(a_1+1\right) \log \sigma^2 -\frac{b_1}{\sigma^2} \\
&+a_2 \log b_2-\log \Gamma\left(a_2\right)-\left(a_2+1\right) \log \tau^2 -\dfrac{b_2}{\tau^2}\\
&+\sum_{m=1}^{n}\sum_{k=1}^{n} \log \left[\frac{1}{\sqrt{2 \pi \eta^2_x}}\left\{\exp \left(-\frac{x_{m k}^2}{2 \eta_x^2}\right)+\exp \left(-\frac{\left(x_{m k}-1\right)^2}{2 \eta_x^2}\right)\right\}\right]\\
&+\sum_{m=1}^{n}\sum_{k=1}^{n}\log \left[\frac{1}{\sqrt{2 \pi \eta^2_s}}\left\{\exp \left(-\frac{s_{m k}^2}{2 \eta_s^2}\right)+\exp \left(-\frac{\left(s_{m k}-1\right)^2}{2 \eta_s^2}\right)\right\}\right]
    \end{split}
\end{equation}
Using Theorem~\ref{thm:optimal-vd}, we derive the optimal variational factors $q_l(\cdot)$ for the components of $\bth = (\mathbb{W}_n,\beta,\sigma^2,\phi,\tau^2,\bpix,\bpis)^\top$.
For each parameter \(\theta \in \bth\), the derivation proceeds by isolating all terms in the joint log-likelihood that depend on \(\theta\), while taking expectations with respect to the remaining variational factors. A key quantity in these updates is $\br_i \coloneq \bY_i - \bpix \bX_i \beta - \bpis \bW_i$, which denotes the residual term in the joint likelihood. In particular, for each \(\theta\), we compute the conditional expectation of \(\br_i^\top \br_i\) given \(\theta\), and combine it with the remaining $\theta$-dependent terms obtained after marginalizing over the other variables. This idea works only because of the mean filed approximation. Thus we start off by simplifying the quadratic term:
\begin{equation}\label{eq:residual-qf}
\begin{split}
    \br_i^\top \br_i &= (\bY_i - \bpix \bX_i \beta - \bpis \bW_i)^\top (\bY_i - \bpix \bX_i \beta - \bpis \bW_i) \\
&= \bY_i^\top \bY_i - 2 \bY_i^\top \bpix \bX_i \beta - 2 \bY_i^\top \bpis \bW_i \\
&\quad + \beta^2 \bX_i^\top \bpix^\top \bpix \bX_i \\
&\quad + 2 \beta \bX_i^\top \bpix^\top \bpis \bW_i \\
&\quad + \bW_i^\top \bpis^\top \bpis \bW_i 
\end{split}
\end{equation}
Recall from Section \ref{sec:vb} that $\E_{q_{tau_X}}[\bpix] = \bM^\ast_X$ and $\E_{q_{tau_X}}[\bpix^\top \bpix] = \bV^\ast_X$, while $\E_{q_{tau_S}}[\bpis] = \bM^\ast_S$ and $\E_{q_{tau_S}}[\bpis^\top \bpis] = \bV^\ast_S$. Another useful fact is that the dependence covariance matrix can always be written as $\bSig^\ast_n = \sigma^2 \bR(\phi)$. This decomposition will be used repeatedly in the derivations below, and in particular plays a central role in obtaining closed-form expressions for the parameters of the variational densities corresponding to our target quantities. 

\textbf{Variational density of $\beta$}
\begin{equation}\label{eq:residualsq-conditional-beta}
    \begin{split}
        \E_q[\br_i^\top \br_i \ |\ \beta]
        & = \bY_i^\top \bY_i - 2\beta \bX_i^\top {\bM^
        \ast_X}^\top \bY_i - 2 \bY_i^\top \bM^\ast_S \bmu_{W_i} \\
        &\quad + \beta^2 \bX_i^\top \bV^\ast_X \bX_i \\
        &\quad + 2 \beta \bX_i^\top {\bM^\ast_X}^\top \bM^\ast_S \bmu_{W_i} \\
        &\quad + \bmu_{W_i}^\top \bV^\ast_S \bmu_{W_i} + \text{tr}(\bV^\ast_S \bSig_{W_i})
    \end{split}
\end{equation}
Hence taking expectation over the joint likelihood in (\ref{eq:joint-likelihood}) except $\beta$, we have 
\begin{equation*}
    \begin{split}
        q_\beta(\beta) \propto & \exp\left(-\frac{1}{2} \E_{q_{\tau^2}}\left(\frac{1}{\tau^2}\right) \sum_{i=1}^B \E_q[\br_i^\top \br_i | \beta] - \frac{\beta^2}{2 \sigma^2_{\beta}}\right)\\
        = & \exp\left(-\frac{1}{2} \frac{\lambda_{a_2}}{\lambda_{b_2}} \sum_{i=1}^B \E_q[\br_i^\top \br_i | \beta] - \frac{\beta^2}{2 \sigma^2_{\beta}}\right)
    \end{split}
\end{equation*}
We will use this trick for the following parameters without explicitly mentioning it. This shows that the variational distribution of 
$\beta \sim \calN\left(\mu_{\lambda, \beta}, \sigma^2_{\lambda, \beta}\right)$ where the variational mean and variance are given by:
\begin{equation*}
    \begin{split}
    \sigma^2_{\lambda, \beta} & = \left( \frac{\lambda_{a_2}}{\lambda_{b_2}} \sum_{i=1}^B \bX_i^\top \bV^\ast_X \bX_i + \frac{1}{\sigma_\beta^2} \right)^{-1}\\
    \mu_{\lambda, \beta} & = \sigma^2_{\lambda, \beta} \left( \frac{\lambda_{a_2}}{\lambda_{b_2}}\sum_{i=1}^B \bX_i^\top {\bM^\ast_X}^\top \left(\bY_i - \bM^\ast_S \bmu_{W_i}\right) \right).
    \end{split}
\end{equation*}
\textbf{Variational density of $W$}
\begin{align*}
\E_q[\br_i^\top \br_i | \boldW_n] 
&= \bY_i^\top \bY_i- 2 \mu_{\lambda, \beta} {\bM^\ast_X}^\top \bX_i^\top \bY_i - 2 \bY_i^\top \bM^\ast_S \bW_i \\
&\quad + (\mu_{\lambda, \beta}^2 + \sigma^2_{\lambda, \beta}) \bX_i^\top \bV^\ast_X \bX_i \\
&\quad + 2 \mu_{\lambda, \beta} \bX_i^\top {\bM^\ast_X}^\top \bM^\ast_S \bW_i + \bW_i^\top \bV^\ast_S \bW_i
\end{align*}
Thus we have:
$$
q_W(\boldW_n) \propto \exp\left(-\frac{1}{2} \E_{q_{\tau_X}}\left(\frac{1}{\tau^2}\right) \sum_{i=1}^B \E[\br_i^\top \br_i | \boldW_n] - \frac{1}{2} \E_q\left(\frac{1}{\sigma^2}\boldW_n^\top \bR(\phi)^{-1} \boldW_n\right)\right)
$$
Since the expected log-likelihood is quadratic in \( \bW_i \), we have $\boldW_n \sim \calN(\bmu_{W}, \bSig_{W})$.
Let us denote 
$$
\boldV_S^\ast = \begin{pmatrix}
\bV_S^\ast & \bzero & \cdots & \bzero \\
\bzero & \bV_S^\ast & \cdots & \bzero \\
\vdots & \vdots & \ddots & \vdots \\
\bzero & \bzero & \cdots & \bV_S^\ast
\end{pmatrix}_{KB \times KB} = \bI_B \otimes \bV_S^\ast
$$
where $\otimes$ denotes the Kroneker product between two matrices. Similarly we can define $\boldM_S^\ast,\boldM_X^\ast, \boldV_X^\ast$. Thus the variational mean and variance of $\boldW_n$ are given by:
\begin{equation*}
\begin{split}
    \bSig_{W} = & {\left({\dfrac{\lambda_{a_2}}{\lambda_{b_2}}} (\boldV^\ast_S) + \dfrac{\lambda_{a_1}}{\lambda_{b_1}} \E[[\bR(\phi)]^{-1}] \right)^{-1}}\\
    \bmu_{W} = & \bSig_{W} \left( {{\dfrac{\lambda_{a_2}}{\lambda_{b_2}}} {\boldM^\ast_S}^\top}\left(\mathbb{Y}_B - \mu_{\lambda,\beta}\boldM^\ast_X \mathbb{X}_B\right) \right) = \dfrac{\lambda_{a_2}}{\lambda_{b_2}}\bSig_W\left\{\begin{bmatrix}
    {\bM_S^\ast}^\top\bY_1\\
    {\bM_S^\ast}^\top\bY_2\\
    \vdots\\
    {\bM_S^\ast}^\top\bY_B\\
\end{bmatrix} - \bmu_{\lambda,\beta}\begin{bmatrix}
    {\bM_X^\ast}^\top\bX_1\\
    {\bM_X^\ast}^\top\bX_2\\
    \vdots\\
    {\bM_X^\ast}^\top\bX_B\\
\end{bmatrix}\right\}
\end{split}
\end{equation*}

\textbf{Variational density of $\sigma^2$}

Using the conjugacy property of the Inverse Gamma distribution, it can be easily shown that $\sigma^2 \sim IG(\lambda_{a_1}, \lambda_{b_1})$ where:
\begin{align*}
\lambda_{a_1} &= \frac{KB}{2} + a_1 \\
\lambda_{b_1} &= \frac{1}{2} \left( \tr\left( \E[(\bR(\phi))^{-1}] \bSig_W \right) + \bmu_W^{\top} \E[(\bR(\phi))^{-1}] \bmu_W \right) + b_1 .
\end{align*}

\textbf{Variational density of $\tau^2$}

Using the conjugacy property of the Inverse Gamma distribution, it can be easily shown that $\tau^2 \sim IG(\lambda_{a_2}, \lambda_{b_2})$ where:
\begin{equation*}
    \begin{split}
        \lambda_{a_2} = & \frac{KB}{2} + a_2\\\
        \lambda_{b_2} = & \frac{1}{2} \sum_{i=1}^B \E_q[\br_i^\top \br_i ] + b_2
    \end{split}
\end{equation*}
Now, let us compute the marginal expectation of the quadratic form of $\br_i$:
\begin{equation}\label{eq:error-qf-vi-exp}
    \begin{split}
        \sum_{i = 1}^B \E_q[\br_i^\top \br_i] 
& = \sum_{i = 1}^B \E_{\beta}\left[ \E_q[\br_i^\top \br_i \mid \beta] \right] \\
& \overset{(\ref{eq:residualsq-conditional-beta})}{=} \sum_{i = 1}^B \Big[ 
    \bY_i^\top \bY_i 
    - 2 \mu_{\lambda,\beta} \bX_i^\top {\bM^\ast_X}^\top \bY_i 
    - 2 \bY_i^\top \bM^\ast_S \bmu_{W_i} \\
&\quad + (\mu^2_{\lambda, \beta} + \sigma^2_{\lambda, \beta}) \bX_i^\top \bV^\ast_X \bX_i 
    + 2 \mu_{\lambda, \beta} \bX_i^\top {\bM^\ast_X}^\top \bM^\ast_S \bmu_{W_i} \\
&\quad + \mu_{W_i}^\top \bV^\ast_S \mu_{W_i} 
    + \text{tr}(\bV^\ast_S \bSig_{W_i}) 
\Big] \\
& = \sum_{i = 1}^B (\bY_i - \mu_{\lambda, \beta} \bM^\ast_X \bX_i)^\top (\bY_i - \mu_{\lambda, \beta} \bM^\ast_X \bX_i) \\
&\quad + \mu^2_{\lambda, \beta} \sum_{i = 1}^B \bX_i^\top (\bV^\ast_X - {\bM^\ast_X}^\top \bM^\ast_X) \bX_i + \sigma^2_{\lambda, \beta} \sum_{i = 1}^B \bX_i^\top \bV^\ast_X \bX_i\\
&\quad - 2 \sum_{i = 1}^B \mu_{W_i}^\top {\bM^\ast_S}^\top (\bY_i - \mu_{\lambda, \beta} \bM^\ast_X \bX_i) \\
&\quad + \sum_{i = 1}^B \left[ \bmu_{W_i}^\top \bV^\ast_S \mu_{W_i} + \text{tr}(\bV^\ast_S \Sigma_{W_i}) \right]\\
& = \sum_{i = 1}^B (\bY_i - \mu_{\lambda, \beta} \bM^\ast_X \bX_i - {\bM^\ast_S}\mu_{W_i})^\top (\bY_i - \mu_{\lambda, \beta} \bM^\ast_X \bX_i - {\bM^\ast_S}\bmu_{W_i} ) \\
&\quad + \mu^2_{\lambda, \beta} \sum_{i = 1}^B \bX_i^\top (\bV^\ast_X - {\bM^\ast_X}^\top \bM^\ast_X) \bX_i + \sigma^2_{\lambda, \beta} \sum_{i = 1}^B \bX_i^\top \bV^\ast_X \bX_i\\
&\quad + \sum_{i = 1}^B \left[ \bmu_{W_i}^\top \left(\bV^\ast_S - {\bM^\ast_S}^\top \bM^\ast_S\right)\bmu_{W_i} + \tr(\bV^\ast_S \bSig_{W_i}) \right]
    \end{split}    
\end{equation}

\textbf{Variational density of $\phi$}

All the terms in the likelihood that involves $\phi$ are through the covariance matrix $\bSig_n^\ast$ of $\boldW_n$ which can be written as $\bSig_n^\ast = \sigma^2 \bR(\phi)$. Thus taking expectation of the log likelihood with all the other parameters except $\phi$, we get:
\begin{equation}\label{eq:vi-density-phi}
\begin{split}
q(\phi) \propto&  \exp\left(-\frac{1}{2} \log |\bR(\phi)| - \frac{\lambda_{a_1}}{2\lambda_{b_1}} \left[\tr(\bR(\phi)^{-1} \bSig_W) + \bmu_W^\top \bR(\phi)^{-1} \bmu_W\right]\right)\\
\eqqcolon & c(\phi) 
\end{split}
\end{equation}
This means that we can write $q(\phi) = \dfrac{c(\phi)}{\int_\phi c(\phi)}$ or equivalently 
\begin{equation}\label{eq:phi-vd-log}
 \log q(\phi) + \log \left(\int_\phi c(\phi)\right) = \log c(\phi)   
\end{equation}
\textbf{Variational density for $\bpix$}

Since it is not possible to get the closed form of the variational density of $\bpi_X$, we will compute the ELBO of $\bpix$ here. We will maximise it to get the variational density.
\begin{align*}
\E_{q_{\tau_X}}[\br_i^\top \br_i\mid\bpix] &=  \bY_i^\top \bY_i - 2 \bY_i^\top \bpix \bX_i \mu_{\lambda, \beta} - 2 \bY_i^\top {\bM^\ast_S} \bmu_{W_i} \\
&\quad + (\mu_{\lambda, \beta}^2 + \sigma^2_{\lambda, \beta}) \bX_i^\top \bpix^\top \bpix \bX_i \\
&\quad + 2 \mu_{\lambda, \beta} \bX_i^\top \bpix^\top \bM^\ast_S {\bmu_{W_i}} \\
&\quad + {\bmu_{W_i}^\top \bV_S^\ast \bmu_{W_i} + \tr(\bV_S^\ast\bSig_W)}.
\end{align*}
Thus the ELBO for $\bpix = ((x_{m,k}))_{n \times n}$ is given by:
\begin{align*}
& \text{ELBO}(\bpix)\\
= & \E_{q_{\tau_X}}[\log L(\boldY_n,\bth\mid\boldX_n) - \log q_{\tau_X}(\bpix \mid \zeta_X)] & \\
= & \E_{q_{\tau_X}}\{\E[\log L(\boldY_n,\bth\mid\boldX_n)\mid\bpix] - \log q_{\tau_X}(\bpix \mid \zeta_X)\}\\
=  & C + \E_{q_{\tau_X}}\Bigg(-\frac{\lambda_{a_2}}{2\lambda_{b_2}} \sum_{i = 1}^B (-2 \bY_i^\top \bpix \bX_i \mu_{\lambda, \beta} + (\mu_{\lambda, \beta}^2 + \sigma^2_{\lambda, \beta}) \bX_i^\top \bpix^\top \bpix \bX_i + 2 \mu_{\lambda, \beta} \bX_i^\top \bpix^\top \bM^\ast_S \bmu_{W_i}) +\\
& \sum_{m=1}^n\sum_{k=1}^n \log \left[\frac{1}{\sqrt{2 \pi \eta^2_x}}\frac{1}{2}\left\{\exp \left(-\frac{x_{m k}^2}{2 \eta_x^2}\right)+\exp \left(-\frac{\left(x_{m k} - 1\right)^2}{2 \eta_x^2}\right)\right\}\right]\Bigg)  \\
& + n^2 \log \tau_X + H(\bV_X) \quad \text{(this entropy term is discussed in \cite{linderman2018reparameterizing})}
\end{align*}
where $H(\bV_X = (v_{X,mn})) = 
\frac{1}{2} \sum_{m,n} \log \left(2\pi e v^2_{X,mn}\right)$.

\textbf{Variational density for $\bpis$}

Just Like $\bpix$, we compute the ELBO of $\bpis$ and maximize it to its variational density.
\begin{align*}
\E_q[\br_i^\top \br_i \mid \bpis] &= \bY_i^\top \bY_i 
- 2 \bY_i^\top \bM_X^\ast \bX_i \mu_{\lambda, \beta} 
- 2 \bY_i^\top \bpis \bmu_{W_i} \\
&\quad + (\mu_{\lambda, \beta}^2 + \sigma^2_{\lambda, \beta}) \bX_i^\top \bV^\ast_X \bX_i 
+ 2 \mu_{\lambda, \beta} \bX_i^\top {\bM^\ast_X}^\top \bpis \bmu_{W_i} \\
&\quad + \bmu_{W_i}^\top \bpis^\top \bpis \bmu_{W_i} 
+ \tr(\bSig_{W_i} \bpis^\top \bpis)
\end{align*}
Thus the ELBO for $\bpis = ((s_{m,k}))_{n \times n}$ is given by:
\begin{align*}
& \text{ELBO}(\bpis)\\
= & \E_{q_{\tau_S}}\left[\log L(\boldY_n, \bth \mid \boldX_n) - \log q_{\tau_S}(\bpis\mid \zeta_S)\right] \\
= & \E_{q_{\tau_S}}\Big[ \E\left[\log L(\boldY_n, \bth \mid \boldX_n) \mid \bpis\right] 
- \log q_{\tau_S}(\bpis\mid \zeta_S) \Big] \\
= & C 
+ \E_{q_{\tau_S}}\Bigg\{ -\frac{\lambda_{a_2}}{2\lambda_{b_2}} \sum_{i = 1}^B\Big(
- 2 \bY_i^\top \bpis \bmu_{W_i} 
+ 2 \mu_{\lambda, \beta} \bX_i^\top {\bM^\ast_X}^\top \bpis \bmu_{W_i} \\
&\quad + \bmu_{W_i}^\top \bpis^\top \bpis \bmu_{W_i} 
+ \tr\left(\bSigma_{W_i} \bpis^\top \bpis\right) \Big) \\
&\qquad + \sum_{m=1}^n \sum_{k=1}^n \log \left[ \frac{1}{\sqrt{2\pi \eta_s^2}} \frac{1}{2}
\left( \exp\left( -\frac{s_{mk}^2}{2 \eta_s^2} \right) 
+ \exp\left( -\frac{(s_{mk}-1)^2}{2 \eta_s^2} \right) \right) \right] \Bigg\} \\
&\qquad + n^2 \log \tau_S + H(\bV_S)
\end{align*}
where $H(\bV_S = (v_{S,mn})) = 
\frac{1}{2} \sum_{m,n} \log \left(2\pi e v^2_{S,mn}\right)$.
\subsection{Full ELBO}\label{appendix-global-elbo}
Here we compute the full ELBO, which will be used as a stopping criterion for our Algorithm \ref{alg-parameter-est}. There are two components to the ELBO calculation: the expectation of the joint log likelihood $\log L(\mathbb{Y}_n,\bth\mid\mathbb{X}_n)$ and the log of joint density $q(\cdot \mid \blam)$, both calculated with respect to the joint variational density $q(\cdot \mid \blam)$. Denote $N(x; \mu,\sigma^2)$ the density of a univariate Gaussian distribution. Also recall that $\br_i = \bY_i-\bpix \bX_i \beta -\bpis \bW_i$ and $\bSig^\ast_n = \sigma^2 \bR(\phi)$. Ignoring the constants, the joint likelihood (\ref{eq:joint-likelihood}) is proportional to:
\begin{equation}
    \begin{split}
        & \log L(\mathbb{Y}_n,\bth\mid\mathbb{X}_n) \\
        \propto & -\left(a_2 + \frac{n}{2} + 1\right)\log \tau^2 -\left(b_2 + \frac{1}{2}\sum_{i=1}^B \br_i^\top \br_i\right)\frac{1}{\tau^2} \\
        & -\left(a_1 + \frac{n}{2} + 1\right)\log \sigma^2 - \left(b_1 + \frac{1}{2}\mathbb{W}_n^\top \bR(\phi)^{-1} \mathbb{W}_n\right)\frac{1}{\sigma^2} \\
        & -\frac{1}{2} \log |\bR(\phi)| - \frac{\beta^2}{2\sigma^2_\beta}\\
        &+\sum_{m=1}^{n}\sum_{k=1}^{n} \log \left[\dfrac{1}{2}\left\{N(x_{m,k};0,\eta^2_x) + N(x_{m,k};1,\eta^2_x)\right\}\right]\\
&+\sum_{m=1}^{n}\sum_{k=1}^{n} \log \left[\dfrac{1}{2}\left\{N(s_{m,k};0,\eta^2_s) + N(s_{m,k};1,\eta^2_s)\right\}\right]
    \end{split}
\end{equation}
Let us note a few things here. For the Inverse Gamma target variational distributions for $\tau^2$ and $\sigma^2$, we have $\E_{q_{\tau^2}} \log \tau^2 = \log \lambda_{b_2} - \psi(\lambda_{a_2})$, $\E_{q_{\tau^2}} \tau^{-2} = \lambda_{a_2}/\lambda_{b_2}$ and similarly $\E_{q_{\sigma^2}} \log \sigma^2 = \log \lambda_{b_1} - \psi(\lambda_{a_1})$, $\E_{q_{\sigma^2}} \sigma^{-2} = \lambda_{a_1}/\lambda_{b_1}$ for the Digamma function $\psi(\cdot)$. Also note that:
\begin{equation*}
    \begin{split}
        \E_q \left[\mathbb{W}_n^\top \bR(\phi)^{-1} \mathbb{W}_n\right] = & \E_q \E_q \left[\mathbb{W}_n^\top \bR(\phi)^{-1} \mathbb{W}_n \mid \phi\right]\\
        = & \E_q\left[\bmu_W^\top \bR(\phi)^{-1} \bmu_W + \tr(\bR(\phi)^{-1}\bSig_W)\right]\\
        = & \bmu_W^\top \E_q\left[\bR(\phi)^{-1}\right] \bmu_W + \tr\left(\E_q\left[\bR(\phi)^{-1}\right]\bSig_W\right)
    \end{split}
\end{equation*}
Thus, we have:
\begin{equation}\label{eq:elbo-joint-lkhd}
    \begin{split}
        & \E_q \left[\log L(\mathbb{Y}_n,\bth\mid\mathbb{X}_n)\right] \\
        \propto & -\left(a_2 + \frac{n}{2} + 1\right)[\log \lambda_{b_2} - \psi(\lambda_{a_2})] -\left(b_2 + \frac{1}{2}\sum_{i=1}^B \E_q\left[\br_i^\top \br_i\right]\right)\frac{\lambda_{a_2}}{\lambda_{b_2}} \\
        & -\left(a_1 + \frac{n}{2} + 1\right)[\log \lambda_{b_1} - \psi(\lambda_{a_1})] - \frac{1}{2}\left(2b_1 + \bmu_W^\top \E_q\left[\bR(\phi)^{-1}\right] \bmu_W + \tr\left(\E_q\left[\bR(\phi)^{-1}\right]\bSig_W\right)\right)\frac{\lambda_{a_1}}{\lambda_{b_1}} \\
        & -\frac{1}{2} \E_q\left[\log |\bR(\phi)|\right] - \frac{\mu^2_{\lambda,\beta} +\sigma^2_{\lambda,\beta}}{2\sigma^2_\beta}\\
        &+\sum_{m=1}^{n}\sum_{k=1}^{n}\E_q \left\{\log \left[\dfrac{1}{2}\left\{N(x_{m,k};0,\eta^2_x) + N(x_{m,k};1,\eta^2_x)\right\}\right]\right\}\\
&+\sum_{m=1}^{n}\sum_{k=1}^{n} \E_q\left\{\log \left[\dfrac{1}{2}\left\{N(s_{m,k};0,\eta^2_s) + N(s_{m,k};1,\eta^2_s)\right\}\right]\right\}\\
= & -\left(a_2 + \frac{n}{2} + 1\right)[\log \lambda_{b_2} - \psi(\lambda_{a_2})] -\left(b_2 + \frac{1}{2}\sum_{i=1}^B \E_q\left[\br_i^\top \br_i\right]\right)\frac{\lambda_{a_2}}{\lambda_{b_2}} -\frac{b_1\lambda_{a_1}}{\lambda_{b_1}} - \frac{\mu^2_{\lambda,\beta} +\sigma^2_{\lambda,\beta}}{2\sigma^2_\beta}\\
& {+} \E_q\left[-\frac{1}{2} \log |\bR(\phi)| - \frac{\lambda_{a_1}}{2\lambda_{b_1}} \left[\tr(\bR(\phi)^{-1} \bSig_W) + \bmu_W^\top \bR(\phi)^{-1} \bmu_W\right]\right] \\
&+\sum_{m=1}^{n}\sum_{k=1}^{n}\E_q \left\{\log \left[\dfrac{1}{2}\left\{N(x_{m,k};0,\eta^2_x) + N(x_{m,k};1,\eta^2_x)\right\}\right]\right\}\\
&+\sum_{m=1}^{n}\sum_{k=1}^{n} \E_q\left\{\log \left[\dfrac{1}{2}\left\{N(s_{m,k};0,\eta^2_s) + N(s_{m,k};1,\eta^2_s)\right\}\right]\right\}\\
= & -\left(a_2 + \frac{n}{2} + 1\right)[\log \lambda_{b_2} - \psi(\lambda_{a_2})] -\left(b_2 + \frac{1}{2}\sum_{i=1}^B \E_q\left[\br_i^\top \br_i\right]\right)\frac{\lambda_{a_2}}{\lambda_{b_2}} -\frac{b_1\lambda_{a_1}}{\lambda_{b_1}} - \frac{\mu^2_{\lambda,\beta} +\sigma^2_{\lambda,\beta}}{2\sigma^2_\beta}\\
& + \E_{q_\phi}\left[\log q(\phi)\right] + \log \left(\int_\phi c(\phi)\right)\quad \text{(See Equation \ref{eq:phi-vd-log})} \\
&+\sum_{m=1}^{n}\sum_{k=1}^{n}\E_{q_{\tau_X}} \left\{\log \left[\dfrac{1}{2}\left\{N(x_{m,k};0,\eta^2_x) + N(x_{m,k};1,\eta^2_x)\right\}\right]\right\}\\
&+\sum_{m=1}^{n}\sum_{k=1}^{n} \E_{q_{\tau_S}}\left\{\log \left[\dfrac{1}{2}\left\{N(s_{m,k};0,\eta^2_s) + N(s_{m,k};1,\eta^2_s)\right\}\right]\right\}\\
    \end{split}
\end{equation}
Let us now shift to the second part of the ELBO, which is the expectation of the joint log-variational densities of our parameters of interest. For $\bth = (\mathbb{W}_n,\beta,\sigma^2,\phi,\tau^2,\bpix,\bpis)^\top$, recall that using mean field approximation, we have:
\begin{equation*}
\begin{split}
    q(\bth \mid\blam) \coloneq &\, q_W\left(\mathbb{W}_n \mid \bmu_W, \bSig_W\right) \, \cdot\,  q_\beta(\beta\mid\mu_{\lambda,\beta},\sigma^2_{\lambda,\beta}) \, \cdot\,  q_{\sigma^2}\left(\sigma^2\mid\lambda_{a_1},\lambda_{b_1}\right)\,\cdot\, q_{\tau^2}\left(\tau^2\mid\lambda_{a_2},\lambda_{b_2}\right) \\
    &   \,\cdot\, q_\phi(\phi)\,\cdot\, q_{\tau_X}(\bpix\mid\zeta_X)\,\cdot\, q_{\tau_S}(\bpis\mid\zeta_S).
\end{split}
\end{equation*}
Hence, we can say:
\begin{equation*}
    \begin{split}
        \calH\{q(\bth \mid \blam)\}
= & -\E_{q}\!\left[\log q(\bth \mid \blam)\right] \\
= & 
\calH\!\left(q(\mathbb{W}_n)\right)
+ \calH\!\left(q(\beta)\right)
+ \calH\!\left(q(\sigma^2)\right)\\
& + \calH\!\left(q(\phi)\right)
+ \calH\!\left(q(\tau^2)\right)
+ \calH\!\left(q(\bpix)\right)
+ \calH\!\left(q(\bpis)\right).
    \end{split}
\end{equation*}
Since $n=\dim(\mathbb{W}_n)$, for the Gaussian factors,
\begin{align*}
\calH\!\left(q(\mathbb{W}_n)\right)
&=
\frac{1}{2}\log\!\left( (2\pi e)^n \, \lvert \boldsymbol{\Sigma}_W \rvert \right),\\
\calH\!\left(q(\beta)\right)
&=
\frac{1}{2}\log\!\left( 2\pi e \, \sigma^2_{\lambda,\beta} \right)
\end{align*}
For the inverse-gamma factors, using the parameterization Inv-Gamma$(\alpha,\beta)$,
\begin{align*}
\calH\!\left(q(\sigma^2)\right)
&=
\lambda_{a_1}
+ \log(\lambda_{b_1})
+ \log\Gamma(\lambda_{a_1})
- (1+\lambda_{a_1})\psi(\lambda_{a_1}), \\
\calH\!\left(q(\tau^2)\right)
&=
\lambda_{a_2}
+ \log(\lambda_{b_2})
+ \log\Gamma(\lambda_{a_2})
- (1+\lambda_{a_2})\psi(\lambda_{a_2}),
\end{align*}
where $\psi(\cdot)$ denotes the digamma function. For the permutation matrices, using the results from \cite{linderman2018reparameterizing}, we have:
\begin{align*}
\calH\left(q(\boldsymbol{\pi}_X)\right)
&=  n^2 \log \tau_X + H(\bV_X), \\
\calH\!\left(q(\boldsymbol{\pi}_S)\right)
&=  n^2 \log \tau_S + H(\bV_S),
\end{align*}
where $H(\bV_U = (v_{U,mn})) = 
\frac{1}{2} \sum_{m,n} \log \left(2\pi e v^2_{U,mn}\right)$. The only parameter whose entropy does not have a closed form is $\phi$, and hence we have used importance sampling to compute $\calH\left(q(\phi)\right) = -\E_{q(\phi)}[\log q(\phi)]$, although its contribution in the total ELBO is 0 as there is a negative factor of this in the joint likelihood term of the ELBO (see Equation \ref{eq:elbo-joint-lkhd}).

\newpage
\section{Proofs}
\subsection{Propositions used in Theorems}
    \begin{proposition}[Conditional Concentration of $\bE_1$ (\ref{eq:error-partition})]\label{prop:E1-dist-given-X}
         For $\bT_{\bPi_1, \bPi_2} = \bP_{\hatPi_1, X}^\perp {\bSig}^{-1/2} \hatPi_2 \bPi_2^\top \bPi_1 \bX$, and $\bM = {\bSig}^{-1/2} \hatPi_2 \bPi_2^\top {\bSig}^{1/2}$, conditioned on the event $\left\|\bT_{\bPi_1, \bPi_2} \right\|^2 > t^\ast$ for some $t^\ast>0$, we have:
        \begin{equation}\label{eq:E1_dist_given_X}
            \Prob\left(\bE_1 < 2\beta^2\delta^\ast\right) \leq \exp\left(-c_1^\ast\dfrac{\beta^2}{\kappa(\bSig)}\cdot t^\ast\right)
        \end{equation} 
        for $\delta^\ast = 
        c\left\|\bT_{\bPi_1, \bPi_2} \right\|^2$ and some universal constants $c,c_1^\ast > 0$.
    \end{proposition}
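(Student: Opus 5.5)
The plan is to condition on $\bX$ throughout, so that $\bT_{\bPi_1,\bPi_2}$ is a fixed vector, and to reduce $\bE_1$ to a noncentral Gaussian quadratic form whose lower tail is controlled by its linear cross term alone. I take $\bE_1$ in (\ref{eq:error-partition}) to be the part of $\calL_1$ evaluated at $(\hatPi_1,\hatPi_2)$ that comes from substituting the model. Since $\bPi_2\bY = \bPi_1\bX\beta + \bW^\ast$, we have $\hatPi_2\bY = \hatPi_2\bPi_2^\top(\bPi_1\bX\beta + \bW^\ast)$. Writing $\bW^\ast = \bSig^{1/2}\bZ$ with $\bZ\sim\calN_n(\bzero,\bI_n)$ independent of $\bX$, this gives
\begin{equation*}
\bE_1 = \left\|\beta\,\bT_{\bPi_1,\bPi_2} + \bP_{\hatPi_1,X}^\perp \bM\bZ\right\|_2^2 .
\end{equation*}
This identification is an assumption about the unseen display (\ref{eq:error-partition}). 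If $\bE_1$ there carries additional centering terms, the same argument applies to the cross term below.

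The first step is to expand the square and drop the nonnegative pure-noise term:
\begin{equation*}
\bE_1 \ge \beta^2\|\bT_{\bPi_1,\bPi_2}\|_2^2 + 2\beta\,\langle \bT_{\bPi_1,\bPi_2},\, \bP_{\hatPi_1,X}^\perp\bM\bZ\rangle .
\end{equation*}
Conditionally on $\bX$, the cross term $G \coloneq \langle \bM^\top \bP_{\hatPi_1,X}^\perp\bT_{\bPi_1,\bPi_2},\, \bZ\rangle$ is a centered Gaussian. Its variance is $\|\bM^\top\bP^\perp_{\hatPi_1,X}\bT_{\bPi_1,\bPi_2}\|_2^2 \le \|\bM\|_2^2\,\|\bT_{\bPi_1,\bPi_2}\|_2^2$, using that $\bP^\perp$ is a projection (indeed $\bP^\perp\bT=\bT$).

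The second step bounds $\|\bM\|_2$. Because $\hatPi_2\bPi_2^\top$ is a permutation, hence orthogonal,
\begin{equation*}
\|\bM\|_2 \le \|\bSig^{-1/2}\|_2\,\|\bSig^{1/2}\|_2 = \sqrt{\kappa(\bSig)} .
\end{equation*}
Thus $\operatorname{Var}(G\mid\bX)\le \kappa(\bSig)\|\bT_{\bPi_1,\bPi_2}\|_2^2$.

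The third step chooses $c=1/4$, so that $2\beta^2\delta^\ast = \tfrac12\beta^2\|\bT_{\bPi_1,\bPi_2}\|^2$. The event $\{\bE_1<2\beta^2\delta^\ast\}$ is then contained in $\{2\beta G < -\tfrac12\beta^2\|\bT_{\bPi_1,\bPi_2}\|^2\}$. The standard Gaussian tail bound gives
\begin{equation*}
\Prob\left(\bE_1<2\beta^2\delta^\ast \,\middle|\, \bX\right) \le \exp\left(-\frac{\beta^2\|\bT_{\bPi_1,\bPi_2}\|_2^4/16}{2\,\kappa(\bSig)\|\bT_{\bPi_1,\bPi_2}\|_2^2}\right) = \exp\left(-\frac{\beta^2\|\bT_{\bPi_1,\bPi_2}\|_2^2}{32\,\kappa(\bSig)}\right).
\end{equation*}
On the conditioning event $\|\bT_{\bPi_1,\bPi_2}\|^2>t^\ast$, this is at most $\exp(-c_1^\ast\beta^2 t^\ast/\kappa(\bSig))$ with $c_1^\ast=1/32$. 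Integrating over $\bX$ restricted to that event yields (\ref{eq:E1_dist_given_X}).

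The main obstacle is the dependence between $\bP_{\hatPi_1,X}^\perp$ and the noise. The estimated permutations are data dependent, so I will treat $(\hatPi_1,\hatPi_2)$ as a fixed candidate pair and apply the bound pointwise. A union bound over the $(K!)^2$ pairs, which is what the $K^2$ and $K^4$ factors in Theorems~\ref{thm:error-pi1-pi2} and \ref{thm:error-betahat} suggest, is deferred to those proofs. With the candidate pair fixed, $\bT_{\bPi_1,\bPi_2}$ and the projection are functions of $\bX$ only, and the conditional Gaussianity of $G$ is legitimate by the independence of $\bX$ and $\bW$ (Assumption~\ref{assume:dgp}).
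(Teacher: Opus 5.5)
Your proposal is correct and follows the paper's argument. You condition on $\bX$ and note that the cross term is a centered Gaussian with variance $\|\bM^\top \bT_{\bPi_1,\bPi_2}\|_2^2 \le \kappa(\bSig)\|\bT_{\bPi_1,\bPi_2}\|_2^2$, using $\|\bM\|_2\le\sqrt{\kappa(\bSig)}$. You then apply a Gaussian lower-tail bound with $\delta^\ast=c\|\bT_{\bPi_1,\bPi_2}\|^2$ for some $c<1/2$, and finally use $\|\bT_{\bPi_1,\bPi_2}\|^2>t^\ast$.

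The one discrepancy is in how you identify $\bE_1$. The paper's $\bE_1$ already subtracts the pure-noise term $\|\bP_{\hatPi_1,X}^\perp\bM\bZ\|_2^2$, so it equals exactly your lower bound $\beta^2\|\bT_{\bPi_1,\bPi_2}\|_2^2+2\beta G$. Your ``drop the nonnegative term'' step therefore becomes an equality, as you anticipated.
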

    
\begin{proof}
           Recall that:
    \begin{eqnarray*}
        \bT_{\bPi_1, \bPi_2} & = & \bP_{\hatPi_1, X}^\perp {\bSig}^{-1/2} \hatPi_2 \bPi_2^\top \bPi_1 \bX = \bP_{\hatPi_1, X}^\perp \bM \widetilde{\bX}_{\bPi_1}\\
        \bV & = & \bP_{\hatPi_1,\bX}^\perp {\bSig}^{-1/2} \hatPi_2 \bPi_2^\top
    \end{eqnarray*}
    where $\bM = {\bSig}^{-1/2} \hatPi_2 \bPi_2^\top {\bSig}^{1/2}$ and $\bP_{\hatPi_1, X} = Proj\left(\widetilde{\bX}_{\hatPi_1}\right) = \frac{\widetilde{\bX}_{\hatPi_1}\widetilde{\bX}_{\hatPi_1}^\top}{\widetilde{\bX}_{\hatPi_1}^\top \widetilde{\bX}_{\hatPi_1}}$ for $\tilX_{\hatPi_1} = {\bSig}^{-1/2}\hatPi_1\bX$. Then we get:
    \begin{eqnarray*}
     \bE_1 & = & \left\| \bP_{\hatPi_1, X}^\perp {\bSig}^{-1/2} \hatPi_2 \bPi_2^\top \bPi_1 \bX \beta \right\|_2^2 
        + 2 \left\langle 
        \bP_{\hatPi_1, X}^\perp {\bSig}^{-1/2} \hatPi_2 \bPi_2^\top \bPi_1 \bX \beta,\ 
        \bP_{\hatPi_1, X}^\perp {\bSig}^{-1/2} \hatPi_2 \bPi_2^\top \bW^\ast)\right\rangle\\
        & = & \beta^2 \left\|\bT_{\bPi_1, \bPi_2} \right\|^2 + 2 \beta\langle\bT_{\bPi_1, \bPi_2},\bV\bW^\ast\rangle\\
        & = & \beta^2 \left\|\bT_{\bPi_1, \bPi_2} \right\|^2 + 2 \beta\bT_{\bPi_1, \bPi_2}^\top\bV\bW^\ast
    \end{eqnarray*}
    Thus conditioned on $\bX$ i.e. conditional on $\left\|\bT_{\bPi_1, \bPi_2} \right\|$, we have:
    \begin{eqnarray*}
        \bE_1\big|\left\|\bT_{\bPi_1, \bPi_2} \right\|^2  \sim \mathcal{N} \left( \beta^2 \left\|\bT_{\bPi_1, \bPi_2} \right\|^2, 4\beta^2 \bT_{\bPi_1, \bPi_2}^\top \bV {\bSig} \bV^\top \bT_{\bPi_1, \bPi_2} \right)
    \end{eqnarray*}
    Let us expand $\bT_{\bPi_1, \bPi_2}^\top \bV {\bSig} \bV^\top \bT_{\bPi_1, \bPi_2}$:
    \begin{eqnarray*}
        && \bT_{\bPi_1, \bPi_2}^\top \bV {\bSig} \bV^\top \bT_{\bPi_1, \bPi_2} \\
        & = & [(\bM \widetilde{\bX}_{\bPi_1})^\top\bP_{\hatPi_1, X}^\perp][ \bP_{\hatPi_1,\bX}^\perp {\bSig}^{-1/2} \hatPi_2 \bPi_2^\top] {\bSig}^{1/2} \bSig^{1/2}[({\bSig}^{-1/2} \hatPi_2 \bPi_2^\top)^\top\bP_{\hatPi_1,\bX}^\perp ]\bP_{\hatPi_1, X}^\perp \bM \widetilde{\bX}_{\bPi_1}\\
        & = & [(\bM \widetilde{\bX}_{\bPi_1})^\top\bP_{\hatPi_1, X}^\perp][ {\bSig}^{-1/2} \hatPi_2 \bPi_2^\top{\bSig}^{1/2}][  \bSig^{1/2}({\bSig}^{-1/2} \hatPi_2 \bPi_2^\top)^\top ]\bP_{\hatPi_1, X}^\perp \bM \widetilde{\bX}_{\bPi_1}\\
        & = & \bT_{\bPi_1, \bPi_2}^\top \bM\bM^\top \bT_{\bPi_1, \bPi_2}\\
        & = & \|\bM^\top \bT_{\bPi_1, \bPi_2}\|^2
    \end{eqnarray*}
    Thus, using the definition of $\bM$ we can equivalently write:
    \begin{eqnarray*}
        \bE_1\ \big|\ \left\|\bT_{\bPi_1, \bPi_2} \right\|^2  \sim \mathcal{N} \left(\beta^2 \left\|\bT_{\bPi_1, \bPi_2} \right\|^2, 4\beta^2 \left\|\bM^\top \bT_{\bPi_1, \bPi_2}\right\|^2 \right) 
    \end{eqnarray*}

 Thus for any $\delta > 0$ and conditioned on $\left\|\bT_{\bPi_1, \bPi_2} \right\|^2$, we have:
 \begin{eqnarray*}
     \Prob(\bE_1 <2\beta^2\delta^\ast) & \leq & \exp\left(-c_1\dfrac{\left(\beta^2\left\|\bT_{\bPi_1, \bPi_2} \right\|^2 - 2 \beta^2\delta^\ast\right)^2}{8\beta^2\left\|\bM^\top T_{\bPi_1, \bPi_2}\right\|^2}\right)\\
     & = & \exp\left(-c_1\beta^2\dfrac{(\left\|\bT_{\bPi_1, \bPi_2} \right\|^2 - 2 \delta^\ast)^2}{8\left\|\bM^\top T_{\bPi_1, \bPi_2}\right\|^2}\right)
 \end{eqnarray*}
Now observe the fact that $\|\bM\| = \|{\bSig}^{-1/2} \hatPi_2 \bPi_2^\top {\bSig}^{1/2}\| \leq \|{\bSig}^{-1/2} \|\cdot \|\hatPi_2\|\cdot \| \bPi_2^\top\| \cdot\|{\bSig}^{1/2}\|\leq \sqrt{\kappa(\bSig)}$ and also the fact that $\left\|\bM^\top T_{\bPi_1, \bPi_2}\right\|^2 \leq \left\|\bM\right\|^2 \left\| T_{\bPi_1, \bPi_2}\right\|^2 \leq \kappa(\bSig)\left\| T_{\bPi_1, \bPi_2}\right\|^2$. Thus for the choice of $\delta^\ast = c\left\|\bT_{\bPi_1, \bPi_2} \right\|^2$ gives us:
\begin{eqnarray*}
    \dfrac{(\left\|\bT_{\bPi_1, \bPi_2} \right\|^2 - 2 \delta^\ast)^2}{8\left\|\bM^\top \bT_{\bPi_1, \bPi_2}\right\|^2} =\dfrac{\left(2c-1\right)^2\left\|\bT_{\bPi_1, \bPi_2} \right\|^4}{8\left\|\bM^\top \bT_{\bPi_1, \bPi_2}\right\|^2}
    \geq  c_2\dfrac{\left\|\bT_{\bPi_1, \bPi_2} \right\|^4}{\kappa(\bSig)\left\| \bT_{\bPi_1, \bPi_2}\right\|^2} = c_2\dfrac{\left\|\bT_{\bPi_1, \bPi_2} \right\|^2}{\kappa(\bSig)}\\
\end{eqnarray*}
Thus conditioned on the event $\left\|\bT_{\bPi_1, \bPi_2} \right\|^2> t^\ast$, we have:
\begin{equation*}
    \Prob_W(\bE_1 <2\beta^2\delta^\ast) \leq \exp\left(-c_1c_2\beta^2 \dfrac{\left\|\bT_{\bPi_1, \bPi_2} \right\|^2}{\kappa(\bSig)}\right) \leq \exp\left(-c_1^\ast\dfrac{\beta^2}{\kappa(\bSig)}\cdot t^\ast\right)
\end{equation*}
for some universal constant $c_1^\ast >0$.
 \end{proof}

\begin{proposition}\label{prop:E21-dist-given-X}
    For $\bE_{21}$ defined in Equation \ref{eq:error-partition}, conditioned on the event $\left\|\bT_{\bPi_1, \bPi_2} \right\|^2 > t^\ast$, we have:
    \begin{eqnarray}\label{eq:E21-dist-given-X}
        \Prob\left(|\bE_{21}| > \beta^2\delta^\ast/2\right) & \leq & \left(-c^\ast_{21}\beta^2\cdot t^\ast\right)
    \end{eqnarray}
    for $\delta^\ast = c\left\|\bT_{\bPi_1, \bPi_2} \right\|^2$ and some universal constants $c,c^\ast_{21} > 0$.
\end{proposition}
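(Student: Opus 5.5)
The plan is to reduce $\bE_{21}$, conditionally on $\bX$, to a centered Gaussian chaos in $\bW^\ast\sim\calN_n(\bzero,\bSig)$ whose defining matrix has bounded rank and bounded operator norm. Then we apply a Hanson--Wright (or $\chi^2_1$) tail bound at the threshold $\beta^2\delta^\ast/2 = c\beta^2\|\bT_{\bPi_1,\bPi_2}\|^2/2$, and finally lower bound $\|\bT_{\bPi_1,\bPi_2}\|^2$ by $t^\ast$ on the conditioning event, exactly as in Proposition~\ref{prop:E1-dist-given-X}. The display in the statement should be read as $\exp(-c^\ast_{21}\beta^2 t^\ast)$.

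\textbf{Step 1 (structure of $\bE_{21}$).} Write $\bE_{21}$ from the partition (\ref{eq:error-partition}), which is part of the noise-only piece of $\calL_1(\hatPi_1,\hatPi_2)-\calL_1(\bPi_1,\bPi_2)$. Expanding $\|\bP^\perp_{\hatPi_1,X}\bV\bW^\ast\|^2 - \|\bP^\perp_{\bPi_1,X}\bSig^{-1/2}\bW^\ast\|^2$ with $\bP^\perp=\bI-\bP$, the identity parts cancel after the whitening $\bZ=\bSig^{-1/2}\bW^\ast$, because $\hatPi_2\bPi_2^\top$ is orthogonal up to the factor $\bM$. What remains are terms of the form $\bZ^\top \bA\bZ$ with $\bA$ built from the two rank-one projections $\bP_{\hatPi_1,X}$ and $\bP_{\bPi_1,X}$, conjugated by $\bM$. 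Hence, conditionally on $\bX$ (which fixes $\bA$), $\bA$ has rank at most $2$. Using $\|\bM\|_2\le\sqrt{\kappa(\bSig)}$ from the proof of Proposition~\ref{prop:E1-dist-given-X}, we get $\|\bA\|_2\le C\kappa(\bSig)$ and $\|\bA\|_F\le \sqrt2\,\|\bA\|_2$. If $\bE_{21}$ instead contains the centering $\E[\bZ^\top\bA\bZ\mid\bX]=\tr(\bA)$, we keep it. If it does not, we bound $|\tr \bA|\le 2\|\bA\|_2$, a constant, and absorb it, since on the event $\|\bT\|^2>t^\ast$ with $t^\ast$ large it is dominated by $\beta^2\delta^\ast/4$.

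\textbf{Step 2 (tail bound).} Apply Hanson--Wright conditionally on $\bX$:
$$\Prob\bigl(|\bZ^\top\bA\bZ-\tr\bA|>u\bigr)\le 2\exp\Bigl(-c\min\Bigl\{\tfrac{u^2}{\|\bA\|_F^2},\tfrac{u}{\|\bA\|_2}\Bigr\}\Bigr),$$
with $u=\beta^2\delta^\ast/4$. Since $\|\bA\|_F\lesssim\|\bA\|_2=\bigo(1)$ for fixed $\kappa(\bSig)$, the minimum is of order $\min\{\beta^4\|\bT\|^4,\beta^2\|\bT\|^2\}$. In the regime of Theorems~\ref{thm:error-pi1-pi2}--\ref{thm:error-betahat} we have $\beta^2\|\bT\|^2\gtrsim \snr\cdot t^\ast\ge1$, so the minimum is the linear term, giving $\exp(-c\beta^2\|\bT\|^2)$. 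On $\|\bT\|^2>t^\ast$ this is at most $\exp(-c^\ast_{21}\beta^2 t^\ast)$, with the factor $\kappa(\bSig)$ and the leading $2$ absorbed into $c^\ast_{21}$.

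\textbf{Main obstacle.} The delicate point is Step 1: showing that the matrix of the quadratic form genuinely has $\bigo(1)$ rank, uniformly in $n$. Without this, $\|\bA\|_F$ would scale like $\sqrt n$ and the bound would degrade. I would verify it by writing $\bP_{\hatPi_1,X}=\bu\bu^\top$ and $\bP_{\bPi_1,X}=\bu_0\bu_0^\top$ with unit vectors $\bu,\bu_0$, and checking that every surviving term in $\bE_{21}$ is of the form $(\bu^\top\bM^\top\bZ)^2$, $(\bu_0^\top\bZ)^2$, or a cross product of such linear forms. Each of these is a scaled $\chi^2_1$ or a product of two Gaussians, and their tails can also be bounded directly, which gives the same conclusion as Hanson--Wright.
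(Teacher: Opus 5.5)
Your overall route (condition on $\bX$, view $\bE_{21}$ as a rank-two Gaussian quadratic form, apply a sub-exponential tail at level $\beta^2\delta^\ast/2\asymp\beta^2\|\bT_{\bPi_1,\bPi_2}\|^2$, then use $\|\bT_{\bPi_1,\bPi_2}\|^2>t^\ast$) is the paper's. However, Step~1 as written analyzes the wrong random variable and relies on a false cancellation.

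In (\ref{eq:error-partition}), $\bE_{21}=\|\bP^\perp_{\bPi_1,X}\bSig^{-1/2}\bW^\ast\|_2^2-\|\bP^\perp_{\hatPi_1,X}\bSig^{-1/2}\bW^\ast\|_2^2$ contains no $\hatPi_2\bPi_2^\top$ at all. The expression you expand, $\|\bP^\perp_{\hatPi_1,X}\bV\bW^\ast\|_2^2-\|\bP^\perp_{\bPi_1,X}\bSig^{-1/2}\bW^\ast\|_2^2$, is the full noise term $-\bE_2$, and its $\bM$-dependent piece is $\bE_{22}$. For that expression the identity parts do not cancel. The matrix $\bM=\bSig^{-1/2}\hatPi_2\bPi_2^\top\bSig^{1/2}$ is orthogonal only if $\hatPi_2\bPi_2^\top$ commutes with $\bSig$, so with $\bZ=\bSig^{-1/2}\bW^\ast$ the term $\bZ^\top(\bM^\top\bM-\bI)\bZ$ survives.

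This matrix has rank up to $2h_2=2k_2B$, which grows with $n$. Its trace $\tr(\bM^\top\bM)-n$ is strictly positive unless $\bM$ is orthogonal, by AM--GM on the squared singular values, whose product is $\det(\bM)^2=1$. For example, if $\bSig=\bI_B\otimes\bSig_K$ the trace is $B$ times a per-block excess, which is positive whenever $\hatpi_2\bpi_2^\top$ does not commute with $\bSig_K$. Consequently:
\begin{itemize}
\item the rank-$\bigo(1)$ claim and $|\tr\bA|\le 2\|\bA\|_2$ both fail;
\item Hanson--Wright then gives at best the $h_2$-dependent bound of Proposition~\ref{prop:E22-dist-cond-X}, which needs $\lambda_{\max}(\bSig)t^\ast\ge h_2/\snr$;
\item the verification proposed in your last paragraph would fail on this expression.
\end{itemize}
Separately, absorbing $\kappa(\bSig)$ into a universal constant is not legitimate, since $\kappa(\bSig)$ is a problem parameter.

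The fix is to use the actual definition. Since the same $\bZ$ enters both norms, $\bE_{21}=\|\bP_{\hatPi_1,X}\bZ\|_2^2-\|\bP_{\bPi_1,X}\bZ\|_2^2=(\bu^\top\bZ)^2-(\bu_0^\top\bZ)^2$. So $\bA=\bu\bu^\top-\bu_0\bu_0^\top$ has rank at most two, $\|\bA\|_2\le 1$, $\|\bA\|_F\le\sqrt{2}$ and $\tr\bA=0$, and neither $\bM$ nor $\kappa(\bSig)$ appears. Because $\bZ\sim\calN_n(\bzero,\bI_n)$ is independent of $\bX$ and the conditioning event depends on $\bX$ only, your Step~2 then goes through unchanged.

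This is exactly the paper's argument. It writes $\bE_{21}=Z_1-\tilde{Z}_1$ with two dependent $\chi^2_1$ variables. It then bounds $\Prob(|Z_1-\tilde{Z}_1|>u)\le\Prob(|Z_1-1|>u/2)+\Prob(|\tilde{Z}_1-1|>u/2)\le c'e^{-cu}$ for all $u\ge 0$, and substitutes $u=\beta^2\delta^\ast/2$. The leading constant $c'$ in that form also spares you the argument that the linear term attains the minimum.
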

\begin{proof}
Observe that the term: 
\begin{equation*}
\bE_{21} = \left\| \bP_{{\bPi}_1, X}^\perp {\bSig}^{-1/2} \bW^\ast \right\|_2^2 - \left\| \bP_{\hatPi_1, X}^\perp {\bSig}^{-1/2} \bW^\ast \right\|_2^2 = \left\| \bP_{\hatPi_1, X} {\bSig}^{-1/2} \bW^\ast \right\|_2^2 - \left\| \bP_{{\bPi}_1, X} {\bSig}^{-1/2} \bW^\ast \right\|_2^2
\end{equation*}
As $\bW^\ast \sim(\bzero, {\bSig})$, we have $\bet = {\bSig}^{-1/2}\bW^\ast\sim N(\bzero,\bI_n) $, and since $\bP_{{\bPi}_1, X}$ and $\bP_{\hatPi_1, X}$ are rank 1 projection matrices almost surely, we can write $\bE_{21} = Z_1 - \tilde{Z}_1$ where both $Z_1$ and $\tilde{Z}_1$ are $\chi^2_1$ random variables almost surely (not necessarily independent). Thus we have:
$$
\begin{aligned}
    \Prob(|\bE_{21}| >t) = & \E_{X}\E_{W|X}[\mathbb{I}(|\bE_{21}| >t)|X]\\
    = &\E_{X}\E_{W|X}[\mathbb{I}(|Z_1 - \tilde{Z}_1| >t)|X]\\
    = & \E_{X}\Prob[|Z_1 - \tilde{Z}_1| >t|X]\\
    = & \Prob(|Z_1 - \tilde{Z}_1| >t)\\
    \leq & \Prob(|Z_1 - 1|> t/2) + \Prob(|\tilde{Z}_1 - 1|> t/2)
    \leq & c^\prime \exp\left(- \dfrac{c_{21}}{\sqrt{2}}t\right)
\end{aligned}
$$
This follows from the fact that $\chi^2$ random variables are sub-exponential. Hence, conditional on $\left\|\bT_{\bPi_1, \bPi_2} \right\|^2>t^\ast$, we have:
\begin{eqnarray*}
    \Prob\left(|\bE_{21}| > \beta^2\delta^\ast/2\right) & = &  \Prob\left(|\bE_{21}| > \beta^2 \cdot c\left\|\bT_{\bPi_1, \bPi_2} \right\|^2\right)\\
    & \leq & c'\exp\left(-{c_{21}c}\ \beta^2 \left\|\bT_{\bPi_1, \bPi_2} \right\|^2\right)\\
    & \leq & c'\exp\left(-c_{21}c\cdot\beta^2\cdot t^\ast\right)\\
    & \leq & \left(-c^\ast_{21}\beta^2\cdot t^\ast\right)
\end{eqnarray*}

\end{proof}

\begin{proposition}\label{prop:E22-dist-cond-X}
    Conditioned on the event $\left\|\bT_{\bPi_1, \bPi_2} \right\|^2 > t^\ast$, for $\bE_{22}$ defined in Equation \ref{eq:error-partition} and any $ \lambda_{\max}(\bSig) t^\ast \geq \dfrac{h_2}{\snr}$ where $h_2 \coloneq d_H(\hatPi_2,\bPi_2)$, we have:
    {\small
    \begin{equation*}\label{eq:E22-dist-cond-X}
            \Prob\left(|\bE_{22}| > \beta^2\delta^\ast/2\right)\leq  \exp \left(-c^\ast_{22}\snr \min \left\{\frac{\snr}{h_2}\left(\lambda_{\max}(\bSig) t^\ast - \dfrac{h_2}{\snr}\right)^2,\left(\lambda_{\max}(\bSig) t^\ast - \dfrac{h_2}{\snr}\right)\right\}\right)
    \end{equation*}}
    for $\delta^\ast = c\left\|\bT_{\bPi_1, \bPi_2} \right\|^2$ and some universal constant $c,c_{22} > 0$, and $\snr$ defined in Section \ref{sec:theory}.
\end{proposition}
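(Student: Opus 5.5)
The plan is to write $\bE_{22}$ as a centered Gaussian quadratic form conditionally on $\bX$, and then apply the Hanson--Wright inequality. The two terms of $\bE_{22}$ in Equation~\ref{eq:error-partition} are the squared norms of $\bP_{\hatPi_1, X}^\perp {\bSig}^{-1/2} \hatPi_2 \bPi_2^\top \bW^\ast$ and $\bP_{\hatPi_1, X}^\perp {\bSig}^{-1/2}\bW^\ast$. I am assuming $\bE_{22}$ is the difference of these two squared norms; that form must be checked against Equation~\ref{eq:error-partition}.

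\textbf{Setup.} Set $\bet = {\bSig}^{-1/2}\bW^\ast \sim \calN(\bzero,\bI_n)$ and $\bQ = \hatPi_2\bPi_2^\top$. As in Proposition~\ref{prop:E1-dist-given-X}, let $\bM = {\bSig}^{-1/2}\bQ{\bSig}^{1/2}$. Then
\[
\bE_{22} = \bet^\top \bA \bet, \qquad \bA = \bM^\top \bP_{\hatPi_1,X}^\perp \bM - \bP_{\hatPi_1,X}^\perp .
\]
The matrix $\bA$ depends only on $\bX$ (through the projection) and on the permutations, and $\bet$ is independent of $\bX$. So conditionally on $\bX$, and hence on the event $\left\|\bT_{\bPi_1, \bPi_2} \right\|^2 > t^\ast$, Hanson--Wright gives
\[
\Prob\big(|\bet^\top\bA\bet - \tr \bA| > s\big) \le 2\exp\big(-c\min\{s^2/\|\bA\|_F^2,\ s/\|\bA\|_2\}\big).
\]

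\textbf{Structural bounds on $\bA$.} Write $\bM - \bI_n = {\bSig}^{-1/2}(\bQ - \bI_n){\bSig}^{1/2}$. Since $\bQ$ moves exactly $h_2 = d_H(\hatPi_2,\bPi_2)$ coordinates, $\bQ-\bI_n$ has at most $h_2$ nonzero columns, so $\mathrm{rank}(\bM-\bI_n)\le h_2$. Decompose
\[
\bA = (\bM-\bI_n)^\top\bP_{\hatPi_1,X}^\perp\bM + \bP_{\hatPi_1,X}^\perp(\bM-\bI_n),
\]
so that $\mathrm{rank}(\bA)\le 2h_2$. From $\|\bM\|\le\sqrt{\kappa(\bSig)}$ (shown in Proposition~\ref{prop:E1-dist-given-X}) we get $\|\bA\|_2 \le (1+\sqrt{\kappa})^2 \le 4\kappa(\bSig)$. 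Consequently
\[
|\tr\bA| \le 8h_2\kappa(\bSig), \qquad \|\bA\|_F^2 \le 32 h_2\kappa(\bSig)^2 .
\]

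\textbf{Plugging in the threshold.} On the event, $\beta^2\delta^\ast/2 = (c/2)\beta^2\|\bT_{\bPi_1, \bPi_2}\|^2 > (c/2)\beta^2 t^\ast$. Using $\beta^2 = \snr\cdot\lambda_{\max}(\bSig)\kappa(\bSig)$, this becomes $(c/2)\kappa(\bSig)\,\snr\,\lambda_{\max}(\bSig)t^\ast$. Subtracting the mean bound leaves a deviation
\[
s \gtrsim \kappa(\bSig)\,\snr\Big(\lambda_{\max}(\bSig)t^\ast - \tfrac{h_2}{\snr}\Big).
\]
The hypothesis $\lambda_{\max}(\bSig) t^\ast \ge h_2/\snr$ makes this nonnegative, with constants absorbed by rescaling $c$, or by restricting to $\lambda_{\max}t^\ast \ge C h_2/\snr$ and adjusting $c$. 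Then
\[
\frac{s^2}{\|\bA\|_F^2} \gtrsim \frac{\snr^2}{h_2}\Big(\lambda_{\max}(\bSig)t^\ast - \tfrac{h_2}{\snr}\Big)^2, \qquad \frac{s}{\|\bA\|_2} \gtrsim \snr\Big(\lambda_{\max}(\bSig)t^\ast - \tfrac{h_2}{\snr}\Big).
\]
Factoring out $\snr$ gives exactly the claimed minimum. Finally, integrate over $\bX$ restricted to the conditioning event.

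\textbf{Main obstacle.} The delicate step is matching constants in the mean term. Hanson--Wright only controls deviations around $\tr\bA$, and the bound $|\tr\bA|\lesssim h_2\kappa$ must be absorbed into the shift $h_2/\snr$ with a universal constant. If the constants do not line up directly, I would redefine $c$ in $\delta^\ast$, or state the bound for $\lambda_{\max}t^\ast \ge C h_2/\snr$ and fold $C$ into $c^\ast_{22}$.
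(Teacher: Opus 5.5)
Your proposal follows essentially the same route as the paper's proof. Both write $\bE_{22}$ as a Gaussian quadratic form in $\bSig^{-1/2}\bW^\ast$ conditional on $\bX$, apply Hanson--Wright with $\mathrm{rank}(\bA)\le 2h_2$, $\|\bA\|_2\lesssim\kappa(\bSig)$ and $|\tr\bA|\lesssim h_2\kappa(\bSig)$, and then substitute $\beta^2=\snr\,\lambda_{\max}(\bSig)\kappa(\bSig)$; the paper gets somewhat sharper constants from Lemma~\ref{lem:specA}, and your sign convention for $\bA$ is immaterial since only $|\bE_{22}|$ enters. Absorbing the mean term by taking $c$ large is also what the paper's corresponding step implicitly needs (with its trace bound $\sqrt{2}h_2\kappa(\bSig)$ it requires $c\ge 2\sqrt{2}$), so the argument is fine for this proposition as stated, but such a $c$ conflicts with the $c<1/2$ that Proposition~\ref{prop:E1-dist-given-X} needs when the same $\delta^\ast$ is used in Theorem~\ref{thm:error-pi1-pi2}.
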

\begin{proof}
Observe that:
    \begin{equation}
\begin{split}
\bE_{22} & = \left\| \bP_{\hatPi_1, X}^\perp {\bSig}^{-1/2} \bW^\ast \right\|_2^2
- \left\| \bP_{\hatPi_1, X}^\perp {\bSig}^{-1/2} \hatPi_2 {\bPi}_2^\top \bW^\ast \right\|_2^2\\
& =  \bZ^\top \bP_{\hatPi_1, X}^\perp \bZ 
- \bZ^\top {\bSig}^{1/2} {\bPi}_2 \hatPi_2^\top {\bSig}^{-1/2} \bP_{\hatPi_1, X}^\perp {\bSig}^{-1/2} \hatPi_2 {\bPi}_2^\top {\bSig}^{1/2} \bZ \\
& = \bZ^\top (\bP_{\hatPi_1, X}^\perp - \bM^\top \bP_{\hatPi_1, X}^\perp \bM)\bZ
\end{split}
\end{equation}
where $\bM = {\bSig}^{-1/2} \hatPi_2 {\bPi}_2^\top {\bSig}^{1/2}$, and $ \bZ= {\bSig}^{-1/2}\bW^\ast \sim N(0, \bI_n)$. We analyze the quadratic form:
\begin{equation}\label{eq:E22}
\bZ^\top \bA \bZ \coloneqq \bZ^\top (\bP_{\hatPi_1, X}^\perp - \bM^\top \bP_{\hatPi_1, X}^\perp \bM)\bZ
\end{equation}

The Hanson-Wright inequality (\ref{lem:hanson-wright}) gives the following tail bound for any $t^\ast \geq 0$:
\begin{equation}
\Prob \left( \left| \bZ^\top \bA \bZ - \tr(\bA) \right| > t^\ast \right)
\leq \exp \left( - c_{22} \, \min \left( \frac{{t^\ast}^2}{\|\bA\|_F^2}, \, \frac{t^\ast}{\|\bA\|_2} \right) \right),
\end{equation}
Then note that we have the following, where the second inequality follows from triangle inequality and third one from Hanson-Wright (as long as $t^\ast > |\tr(\bA)|$):
\begin{eqnarray*}
    \Prob \left(|\bZ^{\top} \bA \bZ|  > t^\ast \right) & = & \Prob \left(|\bZ^{\top} \bA \bZ| - \left| \tr(\bA) \right|  >t^\ast - \left| \tr (\bA) \right|\right) \\
    & \leq & \Prob \left( \left| \bZ^{\top} \bA Z - \tr(\bA) \right| > t^\ast - \left| \tr (\bA) \right| \right) \\
    & \leq &  \exp \left( - c_{22} \, \min \left\{ \frac{(t^\ast - |tr(\bA)|)^2}{\|\bA\|_F^2}, \, \frac{t^\ast - |tr(\bA)|}{\|\bA\|_2} \right\} \right).
\end{eqnarray*}
By Lemma \ref{lem:specA}, we have 
$\|\bA\|_{2} \leq \kappa(\bSig) \leq \sqrt{2}\kappa(\bSig)$, $\|\bA\|_{F} \leq \sqrt{\textrm{rank}(A)}\|\bA\|_{2} \leq \sqrt{2h_2} \kappa({\bSig})$ and $|\tr(\bA)| \leq \sqrt{2}h_2\kappa(\bSig)$. Using the above results:
\begin{eqnarray*}
    \Prob \left(  |\bZ^{\top} \bA \bZ|  > t^\ast \right)
    & \leq &  \exp \left( - c_{22} \, \min \left\{ \frac{(t^\ast-\sqrt{2}h_2\kappa(\bSig))^2}{2h_2\kappa^2({\bSig})}, \, \frac{t^\ast -\sqrt{2}h_2\kappa(\bSig)}{\sqrt{2}\kappa({\bSig})} \right\} \right).
\end{eqnarray*}
Thus plugging in $t^\ast = {\beta^2\delta^\ast}/{2}$ for $\delta^\ast = c\left\|\bT_{\bPi_1, \bPi_2} \right\|^2$, and conditioning on $\left\|\bT_{\bPi_1, \bPi_2} \right\|^2> t^\ast$ for some $t^\ast>0$, we have:
\begin{eqnarray*}
    t^\ast - \sqrt{2}h_2\kappa(\bSig)
    & = & \beta^2 \cdot \frac{c}{2}\left\|\bT_{\bPi_1, \bPi_2} \right\|^2 - \sqrt{2}h_2\kappa(\bSig)\\
    & > & \sqrt{2}c^\ast\left[\beta^2 t^\ast - h_2\kappa(\bSig)\right]\\
    & = & \sqrt{2}c^\ast\left[{\kappa(\bSig)\ \snr}\cdot \lambda_{\max}(\bSig) t^\ast - h_2\kappa(\bSig)\right]\\
    & = & \sqrt{2}c^\ast\kappa(\bSig)\ \snr\left[\lambda_{\max}(\bSig) t^\ast - \dfrac{h_2}{\snr}\right]
\end{eqnarray*}
Hence for $\lambda_{\max}(\bSig) t^\ast \geq \frac{h_2}{\snr}$, and conditioned on $\left\|\bT_{\bPi_1, \bPi_2} \right\|^2> t^\ast$, we have:
{
\small
\begin{eqnarray*}
    \Prob\left(|\bE_{22}| > \beta^2\delta^\ast/2\right)\leq  \exp \left( - c^\ast_{22} \, \snr \, \min \left\{\frac{\snr}{h_2}\left(\lambda_{\max}(\bSig) t^\ast - \dfrac{h_2}{\snr}\right)^2, \, \left(\lambda_{\max}(\bSig) t^\ast - \dfrac{h_2}{\snr}\right) \right\}\right)
\end{eqnarray*}}
\end{proof}

\begin{proposition}\label{prop:prob-T-less-t}
    For $\bT_{\bPi_1, \bPi_2} = \bP_{\hatPi_1, X}^\perp {\bSig}^{-1/2} \hatPi_2 \bPi_2^\top \bPi_1 \bX$, under the assumption $\bX \sim \calN(\bzero,\bI_n)$ and $d_H\left(\hatPi_2\bPi_2^\top\bPi_1 \hatPi_1^\top,\bI_n\right) = h_{12}$, for any $0 \leq  \lambda_{\max}(\bSig) t^\ast \leq h_{12}$ , we have:
    \begin{eqnarray}\label{eq:prob-T-less-t}
        \Prob\left(\left\|\bT_{\bPi_1, \bPi_2} \right\|^2 < t^\ast\right) 
        & \leq &  6 \exp\left(-\frac{h_{12}}{10}\left[\log \frac{h_{12}}{\lambda_{\max}(\bSig)t^\ast} + \dfrac{\lambda_{\max}(\bSig)t^\ast}{h_{12}} -1\right]\right)
    \end{eqnarray}
\end{proposition}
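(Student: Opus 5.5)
The plan is to strip away the whitening by $\bSig^{-1/2}$ at the cost of a factor $\lambda_{\max}(\bSig)$, reduce to a statement about i.i.d.\ Gaussians and a single permutation with $h_{12}$ non-fixed points, and then invoke (or re-derive) the small-ball lemma of \citet{pananjady2018} for $\|\bP_X^\perp \bPi \bX\|^2$.

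\textbf{Step 1 (removing $\bSig$).} Set $\bQ = \hatPi_2\bPi_2^\top\bPi_1$, $\bu = \hatPi_1\bX$ and $\bm{v}=\bQ\bX$. Since $\bP_{\hatPi_1,X}$ projects onto the span of $\bSig^{-1/2}\bu$, we can write
\[
\|\bT_{\bPi_1,\bPi_2}\|^2=\min_{c\in\R}\|\bSig^{-1/2}(\bm{v}-c\bu)\|^2 .
\]
Using $\|\bSig^{-1/2}\bm{a}\|^2\ge \|\bm{a}\|^2/\lambda_{\max}(\bSig)$, this gives
\[
\|\bT_{\bPi_1,\bPi_2}\|^2 \ge \lambda_{\max}(\bSig)^{-1}\min_c\|\bm{v}-c\bu\|^2 = \lambda_{\max}(\bSig)^{-1}\|\bP_{\bu}^\perp \bm{v}\|^2 .
\]
Hence $\Prob(\|\bT\|^2<t^\ast)\le \Prob(\|\bP_{\bu}^\perp\bm{v}\|^2<\lambda_{\max}(\bSig)t^\ast)$.

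\textbf{Step 2 (reduction to one permutation).} Left-multiply by the orthogonal matrix $\hatPi_1^\top$. This gives $\|\bP_{\bu}^\perp\bm{v}\|^2=\|\bP_{\bX}^\perp \bPi'\bX\|^2$ with $\bPi'=\hatPi_1^\top\bQ$. Now $\bPi'$ is conjugate to $\hatPi_2\bPi_2^\top\bPi_1\hatPi_1^\top$, so its number of non-fixed points is also $h_{12}$. Since $\bX\sim\calN(\bzero,\bI_n)$, it therefore suffices to show, for a fixed permutation $\bPi'$ with $h_{12}$ non-fixed points and any $0\le s\le h_{12}$,
\[
\Prob\left(\|\bP_X^\perp\bPi'\bX\|^2<s\right)\le 6\exp\left(-\tfrac{h_{12}}{10}\left[\log\tfrac{h_{12}}{s}+\tfrac{s}{h_{12}}-1\right]\right).
\]
Applying this with $s=\lambda_{\max}(\bSig)t^\ast$ yields \eqref{eq:prob-T-less-t}.

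\textbf{Step 3 (the small-ball bound).} This is exactly the lemma of \citet{pananjady2018}, which I would cite. If a self-contained argument is wanted, I would proceed as follows.
\begin{itemize}
\item Write $\|\bP_X^\perp\bPi'\bX\|^2=\min_c\|\bPi'\bX-c\bX\|^2$ and restrict the sum to the $h_{12}$ moved coordinates, which only decreases it.
\item Decompose $\bPi'$ into cycles and select a collection of roughly $h_{12}/3$ moved indices $i$ such that the pairs $(X_i, X_{\pi'(i)})$ are mutually disjoint. This makes the terms $(X_{\pi'(i)}-cX_i)^2$ independent for each fixed $c$.
\item For each fixed $c$, every term is Gaussian with variance $1+c^2\ge 1$. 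The lower tail of the resulting $\chi^2_{h_{12}/3}$-type variable gives a bound of the form $\exp(-\tfrac{h}{2}[\log\tfrac{h}{s}+\tfrac{s}{h}-1])$, since the bracket is the usual chi-square lower-tail rate.
\item Handle the minimum over $c$ by a union over the three (or so) index groups and a case split on $|c|$: for large $|c|$ the term $c^2\|X\|^2$ dominates, and for moderate $c$ one uses the fixed-$c$ bound. The constants $6$ and $1/10$ absorb the splitting and the loss from $h_{12}\to h_{12}/3$.
\end{itemize}

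\textbf{Main obstacle.} I expect the difficulty to lie in Step 3, specifically in controlling the data-dependent projection coefficient $c=\langle \bX,\bPi'\bX\rangle/\|\bX\|^2$ uniformly, so that the fixed-$c$ chi-square lower tail transfers to the minimum. If a direct $\epsilon$-net over $c$ turns out to be lossy, I would defer to the published lemma for this step. Steps 1--2 are routine linear algebra.
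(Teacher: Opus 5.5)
Your proposal follows essentially the paper's route: the paper likewise lower-bounds $\|\bT_{\bPi_1,\bPi_2}\|^2$ by $\lambda_{\max}(\bSig)^{-1}\bigl(\|\bX_{12}\|^2-\langle\bX_1,\bX_{12}\rangle^2/\|\bX_1\|^2\bigr)$, with $\bX_1=\hatPi_1\bX\sim\calN(\bzero,\bI_n)$ and $\bX_{12}=\hatPi_{12}\bX_1$ (equivalent to your conjugation step), and then invokes Lemma 4 of \cite{pananjady2017linear}. The step worth adopting is how the paper removes the data-dependent coefficient you flag as the main obstacle: since $\|\bX_{12}\|=\|\bX_1\|$, Cauchy--Schwarz gives $\|\bX_{12}\|^2-\langle\bX_1,\bX_{12}\rangle^2/\|\bX_1\|^2\ge\|\bX_1\|^2-|\langle\bX_1,\bX_{12}\rangle|=\tfrac12\min\{\|\bX_{12}-\bX_1\|^2,\|\bX_{12}+\bX_1\|^2\}$, so only the two fixed values $c=\pm1$ remain, and a union bound over them replaces any case split or net over $c$.
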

\begin{proof}
Before simplifying $\left\|\bT_{\bPi_1, \bPi_2} \right\|^2$, we define two quantities and a norm:
$$
\begin{aligned}
    \bX_{1}\coloneqq & \hatPi_1\bX = {\bSig}^{1/2}\tilX_{\hatPi_1}\\
 \bX_{12}\coloneqq&\hatPi_2\bPi_2^\top\bPi_1\bX =\hatPi_2\bPi_2^\top\bPi_1 \hatPi_1^\top\bX_{1} \coloneqq \hatPi_{12}\bX_1\\
    \langle a,b\rangle_{\Siginv} \coloneqq& a^\top\Siginv b
\end{aligned}
$$
So notice one fact that $\|\bX_{12}\|^2_2 = \|\bX_1\|^2_2 = \|\bX\|^2_2$.
Using these, and the fact that $\bM = {\bSig}^{-1/2} \hatPi_2 \bPi_2^\top {\bSig}^{1/2}$ we make few simplifications:
$$
\begin{aligned}
    \tilX^\top_{\bPi_1}\bM^\top \bM\tilX_{\bPi_1} 
    = & (\bX^\top\bPi_1^\top{\bSig}^{-1/2})({\bSig}^{1/2} \bPi_2 \hatPi_2^\top{\bSig}^{-1/2}) ({\bSig}^{-1/2} \hatPi_2\bPi_2^\top{\bSig}^{1/2}){\bSig}^{-1/2}\bPi_1\bX\\
    = & \bX^\top\bPi_1^\top\bPi_2 \hatPi_2^\top\Siginv \hatPi_2\bPi_2^\top\bPi_1\bX
    =  ||\bX_{12}||^2_{\Siginv}\\
    \tilX_{\bPi_1}^\top \bM^\top\tilX_{\hatPi_1}
    = & (\bX^\top\bPi_1^\top{\bSig}^{-1/2})({\bSig}^{1/2} \bPi_2 \hatPi_2^\top{\bSig}^{-1/2}){\bSig}^{-1/2}\hatPi_1\bX\\
    = & \bX^\top\bPi_1^\top\bPi_2 \hatPi_2^\top\Siginv\hatPi_1\bX
    = \langle \bX_{1}, \bX_{12}\rangle_{\Siginv}\\
    \tilX_{\hatPi_1}^\top\tilX_{\hatPi_1}
    = & \bX^\top\hatPi_1^\top{\bSig}^{-1/2}{\bSig}^{-1/2}\hatPi_1\bX =  ||\bX_1||^2_{\Siginv}
\end{aligned}
$$
Thus, using these notations, we have:
\begin{equation}
    \begin{split}
        \left\|\bT_{\bPi_1, \bPi_2} \right\|^2 = & \bT_{\bPi_1, \bPi_2}^\top \bT_{\bPi_1, \bPi_2}\\
        = & \tilX^\top_{\bPi_1}\bM^\top\left(\bI_n - \bP_{\hatPi_1}\right)\bM\tilX_{\bPi_1}\\
        = & \tilX^\top_{\bPi_1}\bM^\top \bM\tilX_{\bPi_1} - \tilX^\top_{\bPi_1}\bM^\top \frac{\widetilde{\bX}_{\hatPi_1}\widetilde{\bX}_{\hatPi_1}^\top}{\widetilde{\bX}_{\hatPi_1}^\top \widetilde{\bX}_{\hatPi_1}}M\tilX_{\bPi_1}\\
        = &||\bX_{12}||^2_{\Siginv} - \frac{\langle \bX_{1}, \bX_{12}\rangle^2_{\Siginv}}{||\bX_1||^2_{\Siginv}}\\
    \end{split}
\end{equation}
Now, using Lemma \ref{lem:T-bound} and equality of norms, we have:
\begin{eqnarray*}
    \left\|\bT_{\bPi_1, \bPi_2} \right\|^2 & \geq & \lambda_{\min}(\bSig^{-1})\left(\|\bX_{12}\|_2^2 - \frac{\langle \bX_{1}, \bX_{12}\rangle^2}{||\bX_1||^2}\right)\\
    & \geq & \dfrac{1}{\lambda_{\max}(\bSig)}\left(\|\bX_{12}\|_2^2 - \frac{\left|\langle \bX_{1}, \bX_{12}\rangle\right| \cdot \|\bX_{12}\|\cdot \|\bX_1\|}{||\bX_1||^2}\right)\\
    & = & \dfrac{1}{2\lambda_{\max}(\bSig)}\left(2\|\bX_{12}\|_2^2 -2\left|\langle \bX_{1}, \bX_{12}\rangle\right|\right)\\
    & = & \dfrac{1}{2\lambda_{\max}(\bSig)}\min \left\{\|\bX_{12} - \bX_1\|^2_2,\|\bX_{12} + \bX_1\|^2_2 \right\}
\end{eqnarray*}
Thus for some $t^\ast > 0$, we have:
\begin{eqnarray*}
    \Prob\left(\left\|\bT_{\bPi_1, \bPi_2} \right\|^2 < t^\ast\right) & \leq & \Prob\left(\dfrac{1}{2\lambda_{\max}(\bSig)}\min \left\{\|\bX_{12} - \bX_1\|^2_2,\|\bX_{12} + \bX_1\|^2_2 \right\} < t^\ast\right)\\
    & \leq & \Prob\left(\|\bX_{12} - \bX_1\|^2_2 < 2\lambda_{\max}(\bSig)t^\ast\right) + \Prob\left(\|\bX_{12} + \bX_1\|^2_2 < 2\lambda_{\max}(\bSig)t^\ast\right)
\end{eqnarray*}
As $\bX \sim N(\bzero,\bI_n)$, thus $\bX_1, \bX_{12} \sim N(\bzero,\bI_n)$, with $d_H(\hatPi_{12},\bI_n) = h_{12}$ and $\bX_{12} = \hatPi_{12}\bX_1$, following Lemma 4 of  \cite{pananjady2017linear}, and defining $t_0 \coloneqq \lambda_{\max}(\bSig)t^\ast$, we have for any $0 < t_0 < h_{12} \iff 0 < t^\ast < \nicefrac{h_{12}}{\lambda_{\max}(\bSig)}$, assuming $h_{12} \geq 3$,
\begin{eqnarray*}
    \Prob\left(\left\|\bT_{\bPi_1, \bPi_2} \right\|^2 < t^\ast\right) \leq 6 \exp\left(-\frac{h_{12}}{10}\left[\log \frac{h_{12}}{\lambda_{\max}(\bSig)t^\ast} + \dfrac{\lambda_{\max}(\bSig)t^\ast}{h_{12}} -1\right]\right)
\end{eqnarray*}
\end{proof}

\begin{proposition}\label{prop:choice-t}
    For $\snr = \Omega(K^\alpha)$ with $\alpha >1$ and $k_2,k_{12} \geq 2$, with $B \geq B_\ast(K,\alpha) = \frac{\alpha\log K}{K^\alpha - K}$, we have
    $$
    t = k_{12}\dfrac{h_2 + \log \snr}{\snr} \in \left[\dfrac{h_2}{\snr},h_{12}\right]
    $$
    With this choice of $t$, we have the following bounds for $K \geq 4$:
    \begin{itemize}
        \item $ P_i \coloneq \sum \exp(-c_i \ \snr \ t) = \bigo\left(K^4 \snr ^{-c_i}e^{-2c_iB}\right)$
        \item $ P_{22} \coloneq \sum\exp \left( - c_{22} \snr \, \min \left\{\dfrac{\snr}{h_2}\left(t^\ast - \dfrac{h_2}{\snr}\right)^2, \, \left(t^\ast - \dfrac{h_2}{\snr}\right) \right\}\right) = \bigo\left(K^4 \snr ^{-c_i}e^{-2c_iB}\right)$
        \item $P_0 \coloneq \sum\exp\left(-\frac{h_{12}}{10}\left[\log \frac{h_{12}}{t} + \dfrac{t}{h_{12}} -1\right]\right) = \bigo\left(K^2 e^{-c_0 B}\right)$
    \end{itemize}
    \vspace{2mm}
    where the sum is over all pairs $(\hatpi_1, \hatpi_2 ) \in \calQ_K$.
\end{proposition}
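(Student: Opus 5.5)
Throughout I use the shorthand $S=\snr$ and $L=\log S$, and I write $t$ for the rescaled threshold $\lambda_{\max}(\bSig)t^\ast$, which is how Propositions \ref{prop:E22-dist-cond-X} and \ref{prop:prob-T-less-t} use it.

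\emph{Step 1: reduce the sums to Hamming-distance counts.} The block structure makes all Hamming distances multiples of $B$: $h_2=k_2B$ and $h_{12}=k_{12}B$, with $k_2,k_{12}\le K$. I will bound each sum over $\calQ_K$ by grouping terms according to $(k_2,k_{12})$. The number of pairs $(\hatpi_1,\hatpi_2)$ with prescribed within-block distances is at most $\binom{K}{k_2}k_2!\binom{K}{k_{12}}k_{12}!\le K^{k_2+k_{12}}$. Each sum therefore becomes a double series $\sum_{k_2,k_{12}\ge 2}K^{k_2+k_{12}}\,(\text{term})$. The goal is to show each term beats this counting factor geometrically, so that the series is dominated by its first term $k_2=k_{12}=2$. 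That first term is what produces the $K^4$ (respectively $K^2$) prefactors.

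\emph{Step 2: check the interval membership.} The lower bound $t\ge h_2/S$ is immediate, since $k_{12}\ge 1$ and $L>0$. For the upper bound $t\le h_{12}=k_{12}B$, I need $(k_2B+L)/S\le B$, i.e.\ $B(S-k_2)\ge L$. Since $k_2\le K$ and $S\gtrsim K^\alpha$, it suffices that $B(K^\alpha-K)\ge\alpha\log K$, up to constants hidden in $\Omega$. This is exactly the condition $B\ge B_\ast$. I will note honestly that the comparison of $L$ with $\alpha\log K$ needs $S\asymp K^\alpha$, or else absorbing constants.

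\emph{Step 3: bound $P_i$ and $P_{22}$.} For $P_i$, substituting gives
$$\exp(-c_iSt)=\exp(-c_ik_{12}(k_2B+L))=S^{-c_ik_{12}}e^{-c_ik_{12}k_2B}.$$
Multiplying by $K^{k_2+k_{12}}$ and summing geometrically over $k_2,k_{12}\ge2$, the series is dominated by its first term provided $Ke^{-c_iB}$ and $KS^{-c_i}$ are less than $1/2$. This holds for large $B$ and for $S\gtrsim K^\alpha$, possibly after adjusting constants. The leading term is $K^4S^{-2c_i}e^{-4c_iB}$, which is within the claimed $\bigo(K^4S^{-c_i}e^{-2c_iB})$.

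For $P_{22}$, write $t-h_2/S=\big((k_{12}-1)h_2+k_{12}L\big)/S\ge h_2/S+L/S$, using $k_{12}\ge2$. Then the linear branch of the minimum gives $S(t-h_2/S)\ge h_2+L$. The quadratic branch gives
$$\frac{S^2}{h_2}\Big(t-\frac{h_2}{S}\Big)^2\ge\frac{(h_2+L)^2}{h_2}\ge h_2+L.$$
So the exponent is at least $c_{22}(k_2B+L)$, and the same geometric summation as for $P_i$ applies.

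\emph{Step 4: bound $P_0$.} I expect this to be the main obstacle. Set $g(u)=\log u+1/u-1$, which is increasing for $u>1$. The exponent of the $P_0$ term is $-(h_{12}/10)\,g(h_{12}/t)$. Here
$$\frac{h_{12}}{t}=\frac{BS}{k_2B+L}\ge \frac{S}{K+L/B},$$
and since $S\gtrsim K^\alpha$ with $\alpha>1$, this ratio stays bounded below by a constant exceeding $1$ for $K\ge4$. Consequently $g(h_{12}/t)\ge c>0$ uniformly in the indices. This yields a term bound of $e^{-ck_{12}B/10}$, which is uniform in $k_2$.

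The difficulty is the counting factor. With $k_2$ unrestricted, the counting factor over $k_2$ contributes up to $K!$ rather than a geometric series. My plan is therefore to count only over $\hatpi_{12}$ with $d_H=k_{12}$, giving at most $K^{k_{12}}$ choices. The sum over $\hatpi_2$ I will either absorb into a constant multiplicative factor, since $K$ is fixed, or control by using the $\log$ term more carefully to gain a factor $K^{-k_2}$. Either way the result is $\sum_{k_{12}\ge2}K^{k_{12}}e^{-c_0k_{12}B}=\bigo(K^2e^{-2c_0B})$, which gives the stated bound once $B$ is large enough that $Ke^{-c_0B}<1/2$. The requirement $K\ge4$ is what guarantees the strict inequality $g>0$ holds with room.
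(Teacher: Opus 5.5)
Your architecture is the paper's: group the sum over $\calQ_K$ by $(k_2,k_{12})$, count by $K^{k_2+k_{12}}$, and make the exponent beat the count geometrically. Your $P_i$ computation also matches it.

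Two smaller remarks before the real problem. First, your hedge in Step 2 about needing $S\asymp K^\alpha$ is unnecessary. The map $x\mapsto \log x/(x-K)$ is decreasing on $(K,\infty)$, so $S\ge K^\alpha$ gives $L/(S-K)\le \alpha\log K/(K^\alpha-K)=B_\ast\le B$ for every larger $S$; this is how the paper closes it, reading $\Omega(K^\alpha)$ as $S\ge K^\alpha$. Second, in $P_i$ the condition $KS^{-c_i}<1/2$ is superfluous, since the outer ratio is $KS^{-c_i}e^{-2c_iB}\le Ke^{-2c_iB}$.

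\textbf{The gap in $P_{22}$.} You bound $S(t-h_2/S)=(k_{12}-1)k_2B+k_{12}L$ below by $k_2B+L$, which discards the factor $k_{12}$. Your exponent $c_{22}(k_2B+L)$ is then constant in $k_{12}$, and the remaining sum $\sum_{k_{12}=2}^{K}K^{k_{12}}\asymp K^{K}$ is absorbed by nothing. So ``the same geometric summation as for $P_i$'' does not apply: that summation relied on the $P_i$ exponent being $c_ik_{12}(k_2B+L)$. You must keep the growth in $k_{12}$. Set $R:=(k_{12}-1)k_2B+k_{12}L$. Then $R\ge (k_{12}-1)(k_2B+L)\ge \max\{h_2,\tfrac{k_{12}}{2}(k_2B+L)\}$, so $R^2/h_2\ge R$. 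Hence the minimum is the linear branch and the exponent is exactly $c_{22}R$. Your $P_i$ summation, with $c_i$ replaced by $c_{22}/2$, now goes through. The paper instead sums $e^{-c_{22}R}$ directly, with inner ratio $Ke^{-c_{22}(k_{12}-1)B}$ and outer ratio $KS^{-c_{22}}e^{-2c_{22}B}$.

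\textbf{$P_0$ is unfinished.} You leave it as an either/or, and only the second branch proves the stated $K^2$ form. Absorbing the sum over $\hatpi_2$ costs a factor of order $K!$, so it yields only $\bigo(K!\,K^{2}e^{-2c_0B})$, in which the $K^2$ is decorative. Your diagnosis is correct, and the paper's argument does not resolve it. It asserts $K^{k_2}r_{k_2}^2\le (Ke^{-Bc_\delta/5})^{k_2}$ with $r_{k_2}=Ke^{-B\phi(y_{k_2})/10}$. But $r_{k_2}$ does not decay in $k_2$ (indeed $y_{k_2}$ decreases in $k_2$), so the left side is only bounded by $K^{k_2+2}e^{-Bc_\delta/5}$.

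To finish your second branch, split the exponent into two halves. Suppose $\phi(y_{k_2})\ge c_\delta$ and $B\ge 10(K+1)\log K/c_\delta$. Then for $k_{12}\ge 2$ we have $e^{-k_{12}Bc_\delta/20}\le K^{-(K+1)}$, hence $\sum_{k_2=2}^{K}K^{k_2}e^{-k_{12}Bc_\delta/20}\le 1$. What remains is $\sum_{k_{12}\ge 2}(Ke^{-Bc_\delta/20})^{k_{12}}=\bigo(K^2e^{-Bc_\delta/10})$. The $K$-dependent threshold on $B$ is harmless for fixed $K$ and $B\to\infty$.

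Finally, the uniform bound $y_{k_2}\ge 1+\delta$ follows neither from $K\ge 4$ nor from $B\ge B_\ast$. The hypothesis $B\ge B_\ast$ only gives $y_{k_2}\ge 1$, with equality at $S=K^\alpha$, $k_2=K$, $B=B_\ast$. So you need $B$ a constant factor above $B_\ast$. For instance, $B\ge 2B_\ast$ gives $y_{k_2}\ge 2K^\alpha/(K^\alpha+K)>1$, using that $y_{k_2}$ increases in $S$.
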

\begin{proof}
    Let us start off by mentioning the fact that the block size $K \geq 2$ and the number of blocks $B \geq 1$, and we have $h_2 = k_2B$ and $h_{12} = k_{12}B$ with $2 \leq k_2,k_{12}\leq K$.

    \textbf{Lower bound:} Let us first see why the choice of $t$ lies in the given interval. Notice that as $k_{12}\geq 2$ with $\frac{\log \snr}{\snr} > 0$ since $\snr > 1$, we have the following chain of inequality:
    \begin{eqnarray*}
        \dfrac{h_2}{\snr} \leq \dfrac{h_2 + \log \snr}{\snr} \leq k_{12}\dfrac{h_2 + \log \snr}{\snr}=t
    \end{eqnarray*}
\textbf{Upper bound:} Since $h_2=k_2B\le KB$ and the expression $\frac{k_{12}\big(k_2B+\log\snr\big)}{\snr}$ is increasing in $k_2$, the worst case is $k_2=K$. Thus, it suffices to show
$$
\frac{k_{12}\big(KB+\log\snr\big)}{\snr}\leq k_{12}B
\quad\Longleftrightarrow\quad
\frac{\log\snr}{\snr-K}\leq B,
$$
This is a valid lower bound for $B$ as  $\snr>K$ (true because $\alpha>1$ implies $K^{\alpha}>K$). Define the function 
$f(x)\coloneqq\frac{\log x}{x-K}$ for $x>K$.
A direct derivative computation gives
$$
f'(x)=\frac{1-\tfrac{K}{x}-\log x}{(x-K)^2}.
$$
Let $g(x)\coloneq\log x+\tfrac{K}{x}-1$. Then $g'(x)=\tfrac{1}{x}-\tfrac{K}{x^2}=\tfrac{x-K}{x^2}>0$ for $x>K$, and 
$g(k)=\log K>0$. Hence $g(x)>0$ for all $x>K$, i.e.\ $1-\tfrac{K}{x}-\log x<0$, so $f'(x)<0$. 
Therefore $f$ is strictly decreasing on $(K,\infty)$.

By monotonicity and the assumption $\snr\geq K^{\alpha}$,
$$
\frac{\log\snr}{\snr-K}=f(\snr)\le f(K^{\alpha})
=\frac{\log(K^{\alpha})}{K^{\alpha}-K}
=\frac{\alpha\log K}{K^{\alpha}-K}
\leq B.
$$
This proves $\frac{\log\snr}{\snr-K}\leq B$ and hence 
$\frac{k_{12}(h_2+\log\snr)}{\snr}\leq k_{12}B$ for all $2\leq k_{12},k_2\leq K$.

Because $f$ is decreasing, 
$$
\sup_{x\ge K^{\alpha}} f(x) = f(K^{\alpha})=\frac{\alpha\log K}{K^{\alpha}-K}\eqqcolon B_\ast(K,\alpha)
$$
Thus $B\geq B_\ast(K,\alpha)$ is also \emph{necessary} to guarantee the inequality uniformly for all $\snr\geq K^{\alpha}$.

\textbf{Bound on $P_i$:}
\begin{eqnarray*}
    P_i = \sum_{(\hatpi_1, \hatpi_2 ) \in \calQ_k}\exp\left(-c_i\snr \,t\right) 
    & \leq &\sum_{k_2=2}^K\sum_{k_{12}=2}^K K^{k_2+k_{12}}\exp\left(-c_i k_{12}  (k_2 B + \log \snr)\right) \\
    & = &\sum_{k_{12}=2}^{K} K^{k_{12}} e^{-c_i k_{12}\log\snr}
    \left(\sum_{k_2=2}^{K} e^{-k_2(c_i k_{12}B-\log K)}\right)\\
    & = & \sum_{k_{12}=2}^{K} K^{k_{12}} e^{-c_i k_{12}\log\snr}
\left(\sum_{k_2=2}^{K} a_{k_{12}}^{\,k_2}\right),
\quad a_{k_{12}}\coloneqq Ke^{-c_i k_{12}B}. 
\end{eqnarray*}
Assuming  {$B>\tfrac{\log K}{2c_i}$} so that $a_{k_{12}} \leq K e^{-2c_i B}<1$ for all $k_{12}\geq 2$. Then the inner finite geometric sum admits $\sum_{k_2=2}^{K} a_{k_{12}}^{k_2}
= \frac{a_{k_{12}}^{2}\,(1-a_{k_{12}}^{K-1})}{1-a_{k_{12}}}
\leq \frac{a_{k_{12}}^{2}}{1-a_{k_{12}}}$.
This implies that
$$
\sum_{k_{12}=2}^{K} K^{k_{12}} e^{-c_i k_{12}\log\snr}
\left(\sum_{k_2=2}^{K} e^{-k_2(c_i k_{12}B-\log K)}\right)
\leq
\sum_{k_{12}=2}^{K}
\frac{K^{k_{12}} e^{-c_i k_{12}\log\mathrm{SNR}} a_{k_{12}}^{2}}{1-a_{k_{12}}}.
$$
Since $a_{k_{12}}=K e^{-c_i k_{12}B}$ decreases in $k_{12}$,
$\frac{1}{1-a_{k_{12}}}\le \frac{1}{1-K e^{-2c_i B}}$. Also $a_{k_{12}}^{2}=K^{2}e^{-2c_i k_{12}B}$ implies that 
$$
\sum_{k_{12}=2}^{K}
\frac{K^{k_{12}} e^{-c_i k_{12}\log\mathrm{SNR}} a_{k_{12}}^{2}}{1-a_{k_{12}}}
\leq \frac{K^{2}}{1-K e^{-2c_i B}}
\sum_{k_{12}=2}^{K}
K^{k_{12}} e^{-c_i k_{12}\log\mathrm{SNR}}\, e^{-2c_i k_{12}B}.
$$
Set $q\coloneqq Ke^{-(c_i\log\snr+2c_i B)}
=\frac{K}{\snr^{c_i} e^{2c_i B}}$. Then the remaining sum is geometric: $\sum_{k_{12}=2}^{K} q^{k_{12}} \le \sum_{j=2}^{\infty} q^{j}=\frac{q^{2}}{1-q}$, provided $q<1 \iff c_i(\log\snr+2B)>\log K$. This implies that we have the final bound on $P_i$ as:
$$
P_i \leq \frac{K^2}{1- K e^{-2c_i B}}\cdot
\frac{\big(\tfrac{K}{\mathrm{SNR}^{c_i} e^{2c_i B}}\big)^{\!2}}
{1-\tfrac{K}{\mathrm{SNR}^{c_i} e^{2c_i B}}} = \bigo\left(K^4 \snr ^{-c_i}e^{-2c_iB}\right)
$$

\textbf{Bound on $P_{22}$:}
First observe that:
\begin{eqnarray*}
    R & \coloneq & \snr\left(t - \dfrac{h_2}{\snr}\right) \\
    & = & \snr\left(k_{12}\dfrac{h_2 + \log \snr}{\snr} - \dfrac{h_2}{\snr}\right)\\
    & = & (k_{12} - 1)k_2B + k_{12}\log \snr\\
    & \geq & (k_{12} - 1)(k_2B + \log \snr)\\
    & \geq & k_2B \geq 1\\
\end{eqnarray*}
This implies that: $\snr \cdot \min \left\{\frac{\snr}{h_2}\left(t - \dfrac{h_2}{\snr}\right)^2, \, \left(t - \dfrac{h_2}{\snr}\right) \right\} =  \min \left\{\dfrac{R^2}{k_2B}, R\right\} = R$. Hence, we have:
\begin{eqnarray*}
    P_{22} & = & \sum_{(\hatpi_1, \hatpi_2 ) \in \calQ_K}\exp \left( - c^\ast_{22} \snr \, \min \left\{\dfrac{\snr}{h_2}\left(t - \dfrac{h_2}{\snr}\right)^2, \, \left(t - \dfrac{h_2}{\snr}\right) \right\}\right)\\
    & \leq & \sum_{k_2 = 2}^K\sum_{k_{12}=2}^K K^{k_2 + k_{12}}\exp\left(-c^\ast_{22}R\right) \notag\\
    & = &\sum_{k_2 = 2}^K \sum_{k_{12}=2}^K K^{k_2 + k_{12}}
    \exp\!\left(-c^\ast_{22}\big[(k_{12}-1)k_2 B + k_{12}\log \snr\big]\right)\\
    & = & \sum_{k_{12}=2}^K \sum_{k_2 = 2}^K
    \Big(K^{k_{12}} e^{-c^\ast_{22} k_{12}\log\snr}\Big)
    \Big(K^{k_2} e^{-c^\ast_{22} (k_{12}-1)k_2 B}\Big)\\
    & = & \sum_{k_{12}=2}^K
    \Big(K^{k_{12}} e^{-c^\ast_{22} k_{12}\log\snr}\Big)
    \sum_{k_2=2}^K \Big(Ke^{-c^\ast_{22} (k_{12}-1)B}\Big)^{k_2}
\end{eqnarray*}

Assume $Ke^{-c^\ast_{22}B}<1 \iff B > \nicefrac{\log K}{c^\ast_{22}}$ so that for every \(k_{12}\ge2\) the ratio $a_{k_{12}}\coloneqq Ke^{-c^\ast_{22} (k_{12}-1)B}$ satisfies $0<a_{k_{12}}<1$. Then the inner finite geometric sum obeys
$$
\sum_{k_2=2}^K a_{k_{12}}^{k_2}
= \frac{a_{k_{12}}^{2}\bigl(1-a_{k_{12}}^{k-1}\bigr)}{1-a_{k_{12}}}
\leq \frac{a_{k_{12}}^{2}}{1-a_{k_{12}}}
\leq \frac{K^2 e^{-2c^\ast_{22} (k_{12}-1)B}}{1- K e^{-c^\ast_{22}B}}.
$$
Hence
$$
\begin{aligned}
P_{22}
&\le \frac{1}{1- Ke^{-c^\ast_{22}B}}
\sum_{k_{12}=2}^K
K^{k_{12}} e^{-c^\ast_{22} k_{12}\log\snr}\, \big(K^2 e^{-2c^\ast_{22} (k_{12}-1)B}\big)\\
&= \frac{K^2 e^{2c^\ast_{22}B}}{1- K e^{-c^\ast_{22}B}}
\sum_{k_{12}=2}^K
\Big(K\cdot\snr^{-c^\ast_{22}} e^{-2c^\ast_{22}B}\Big)^{k_{12}}.
\end{aligned}
$$
Set $q\coloneqq K\cdot\snr^{-c^\ast_{22}} e^{-2c^\ast_{22}B}$. Assuming {$B > \nicefrac{\log K}{c_{22}^\ast}$} implies $q< 1$.
Then $\sum_{k_{12}=2}^K q^{k_{12}} \leq\sum_{j=2}^{\infty} q^{j}
= \frac{q^2}{1-q}$, so
{
\small
$$
P_{22} \leq 
\frac{K^2 e^{2c^\ast_{22}B}}{1- K e^{-c^\ast_{22}B}}\cdot
\frac{\big(K\snr^{-c^\ast_{22}} e^{-2c^\ast_{22}B}\big)^{2}}{1- K\snr^{-c^\ast_{22}} e^{-2c^\ast_{22}B}}
=
\frac{K^{4}e^{-2c^\ast_{22}B}\snr^{-2c^\ast_{22}}}
{\bigl(1 - K e^{-c^\ast_{22}B}\bigr)\Bigl(1 - \tfrac{K}{\snr^{c^\ast_{22}}e^{2c^\ast_{22}B}}\Bigr)}
= \bigo\left(K^4 \snr ^{-c^\ast_{22}}e^{-2c^\ast_{22}B}\right)
$$}
\textbf{Bound on $P_0$:}
$$
P_0 \leq \sum_{k_2=2}^{K}\sum_{k_{12}=2}^{K}
K^{k_2+k_{12}}
\exp\left(
-\frac{k_{12}B}{10}\Big[\log\frac{k_{12}B}{t}+\frac{t}{k_{12}B}-1\Big]
\right)
$$
Let us define 
$$
y_{k_2}\coloneqq \frac{h_{12}}{t} = \frac{k_{12}B}{k_{12}(k_2B+\log\snr)/\snr}
=\frac{B\snr}{k_2B+\log\snr}
$$
Using this function $\phi(y) \coloneqq \log y+\frac{1}{y}-1\ge 0$ and the notation $r_{k_2} \coloneqq Ke^{-\frac{B}{10}\phi(y_{k_2})}$, we can then simplify the upper bound on $P_0$ as:
$$
P_0 \leq \sum_{k_2=2}^{K}K^{k_2}\sum_{k_{12}=2}^{K}
\Big(Ke^{-\frac{B}{10}\phi(y_{k_2})}\Big)^{k_{12}}
=\sum_{k_2=2}^{K}K^{k_2}
\frac{r_{k_2}^{2}\big(1-r_{k_2}^{K-1}\big)}{1-r_{k_2}}
$$
Next we can assume that there exists $\delta \in (0,1)$ such that for all $k_2 \in [2,K]$, we have $y_{k_2} \geq 1 + \delta$, which tells us that $\phi(y_{k_2})\ge c_\delta\coloneqq \phi(1+\delta)
=\log(1+\delta)+\frac{1}{1+\delta}-1>0$. This implies that $r_{k_2}\leq K e^{-\frac{B}{10}c_\delta}$ and thus $K^{k_2}r_{k_2}^2\leq \big(Ke^{-\frac{B}{5}c_\delta}\big)^{k_2}$. Using this chain on inequalities, we have:
\begin{equation}
    \begin{split}
        P_0 \leq  \sum_{k_2=2}^{K}K^{k_2}\,
        \frac{r_{k_2}^{\,2}\big(1-r_{k_2}^{\,k-1}\big)}{1-r_{k_2}}
        \leq & \dfrac{1}{1 - Ke^{-\frac{B}{10}c_\delta}}\sum_{k_2=2}^K K^{k_2}r_{k_2}^2\\ 
        \leq & \dfrac{1}{1 - Ke^{-\frac{B}{10}c_\delta}}\sum_{k_2=2}^k \big(Ke^{-\frac{B}{5}c_\delta}\big)^{k_2}\\
        = & \dfrac{1}{1 - Ke^{-\frac{B}{10}c_\delta}}\frac{\big(Ke^{-\tfrac{B}{5}c_\delta}\big)^2\big(1-\big(Ke^{-\tfrac{B}{5}c_\delta}\big)^{K-1}\big)}
        {1-Ke^{-\tfrac{B}{5}c_\delta}}\\
        \leq &  \dfrac{\big(Ke^{-\tfrac{B}{5}c_\delta}\big)^2}{\left(1 - Ke^{-\frac{B}{10}c_\delta}\right)\left(1-Ke^{-\tfrac{B}{5}c_\delta}\right)}\\
        = & \bigo\left(K^2 e^{-c_0 B}\right)
    \end{split}
\end{equation}
The last line follows from the fact that if
$B\geq(1+\varepsilon)\max\!\Big\{\frac{10}{c_\delta},\frac{5}{c_\delta}\Big\}\log K$, then $Ke^{-\frac{B}{10}c_\delta}\leq K^{-\varepsilon}, Ke^{-\frac{B}{5}c_\delta}\leq K^{-\varepsilon}$, and so the denominators are bounded away from zero. This completes the proof of this proposition.
\end{proof}

\subsection{Proof of Theorem \ref{thm:error-pi1-pi2}}\label{thm:error-pi1-pi2-proof}

\begin{proof}
    Here we follow the proof technique of \cite{pananjady2017linear}. Recall that the transformed DGP that we are working on is $\bPi_2 \bY  =  \bPi_1 \bX \beta + \bW^\ast$, where $\bW^\ast \sim \calN_n(\bzero, \bSig)$ for $\bSig = \sigma^2\bR(\phi) + \tau^2\bI_n$. Similar to thier Theorem 1, we define
    \begin{equation*}
        \left\{ \left( \hatPi_{1, \texttt{ML}}, \hatPi_{2, \texttt{ML}} \right) \neq \left( \bPi_1, \bPi_2 \right) \right\} = \bigcup_{(\hatPi_1, \hatPi_2 ) \in \calQ_n } \left\{ \Delta \left( (\hatPi_1, \hatPi_2 ), \left( \bPi_1, \bPi_2 \right) \right) \leq 0 \right\}
    \end{equation*}
    where $\calQ_n \coloneqq \mathcal{P}_n \times \mathcal{P}_n \setminus \left( \bPi_1, \bPi_2 \right)$ and
    \begin{equation*}
        \Delta \left( (\hatPi_1, \hatPi_2 ), \left( \bPi_1, \bPi_2 \right) \right) \coloneqq \left\|\bP_{\hatPi_1, X}^\perp \widetilde{\bY}_{\hatPi_2}\right\|_2^2 - \left\|\bP_{\bPi_1, X}^\perp \widetilde{\bY}_{\bPi_2}\right\|_2^2.
    \end{equation*}    
    One thing to mention here is that $\mathcal{P}_n$ is not a space of $n!$ many permutation matrix, but it is essentially a subspace which contains block diagonal permutation matrices with the same block diagonal elements od dimension $K \times K$, and thus we safely replace $\mathcal{P}_n$ by $\mathcal{P}_K$ where $K$ is the fixed size of each block and similarly $\calQ_n = \mathcal{P}_n \times \mathcal{P}_n \setminus \left( \bPi_1, \bPi_2 \right)$ by $\calQ_k \coloneqq \mathcal{P}_K \times \mathcal{P}_k \setminus \left( \bpi_1, \bpi_2 \right)$ since we have the one-to-one relationship $\bPi_u = \bdiag(\bpi_u)$ and $\hatPi_u = \bdiag(\hatpi_u)$ for $u = 1,2$.
    Therefore, we get the following bound on the probability
    \begin{eqnarray}\label{eq:hatpi_neq_pi_ub}
        \mathbb{P} \left( \left( \hatPi_{1, \texttt{ML}}, \hatPi_{2, \texttt{ML}} \right) \neq \left( \bPi_1, \bPi_2 \right) \right) & \leq & \sum_{(\hatpi_1, \hatpi_2 ) \in \calQ_k} \mathbb{P} \left( \Delta \left( (\hatPi_1, \hatPi_2 ), \left( \bPi_1, \bPi_2 \right) \right) \leq 0  \right) \notag \\
        & = & \sum_{(\hatpi_1, \hatpi_2 ) \in \calQ_k} \mathbb{P} \left( \left\|\bP_{\hatPi_1, X}^\perp \widetilde{\bY}_{\hatPi_2}\right\|_2^2 - \left\|\bP_{\bPi_1, X}^\perp \widetilde{\bY}_{\bPi_2}\right\|_2^2 \leq 0  \right).
    \end{eqnarray}
    To get a bound on RHS of \eqref{eq:hatpi_neq_pi_ub} we decompose $\left\|\bP_{\hatPi_1, X}^\perp \widetilde{\bY}_{\hatPi_2}\right\|_2^2 - \left\|\bP_{\bPi_1, X}^\perp \widetilde{\bY}_{\bPi_2}\right\|_2^2$ as follows:
    {\footnotesize
    \begin{equation}\label{eq:error-partition}
        \begin{split}
            & \left\|\bP_{\hatPi_1, X}^\perp \widetilde{\bY}_{\hatPi_2}\right\|_2^2 - \left\|\bP_{\bPi_1, X}^\perp \widetilde{\bY}_{\bPi_2}\right\|_2^2\\
            = & \underbrace{\left(\left\|\bP_{\hatPi_1, X}^\perp \widetilde{\bY}_{\hatPi_2}\right\|_2^2 - \left\|\bP_{\hatPi_1, X}^\perp {\bSig}^{-1/2} \hatPi_2 {\bPi}_2^\top \bW^\ast\right\|_2^2\right)}_{\bE_1} 
         - \underbrace{\left( \left\|\bP_{\bPi_1, X}^\perp \widetilde{\bY}_{\bPi_2}\right\|_2^2 - \left\|\bP_{\hatPi_1, X}^\perp {\bSig}^{-1/2} \hatPi_2 {\bPi}_2^\top \bW^\ast\right\|_2^2\right)}_{\bE_{2}}\\
         = & \bE_1 - \underbrace{\left[\underbrace{
        \left( \left\| \bP_{{\bPi}_1, X}^\perp {\bSig}^{-1/2} \bW^\ast \right\|_2^2 
        - \left\| \bP_{\hatPi_1, X}^\perp {\bSig}^{-1/2} \bW^\ast \right\|_2^2 \right)}_{\bE_{21}} 
        - \underbrace{
        \left( \left\| \bP_{\hatPi_1, X}^\perp {\bSig}^{-1/2} \bW^\ast \right\|_2^2 
        - \left\| \bP_{\hatPi_1, X}^\perp {\bSig}^{-1/2} \hatPi_2 {\bPi}_2^\top \bW^\ast \right\|_2^2 \right)}_{\bE_{22}}\right]}_{\bE_2}
        \end{split}
    \end{equation}}
Note that the last error partition in Equation \ref{eq:error-partition} is possible because:
$$
\bP_{\bPi_1, X}^\perp \widetilde{\bY}_{\bPi_2} = \bP_{\bPi_1, X}^\perp \left(\bPi_1 \bX \beta + \bW^\ast\right) =  \bP_{\bPi_1, X}^\perp\bW^\ast.
$$
Let us define a set $\calH_K := \left\{ \left( \hatpi_1, \hatpi_2 \right) \in \calQ_K \mid  \hatpi_1 \bpi_1^\top =  \hatpi_2 \bpi_2^\top \right\}$. We see that, $\left|\calH_K\right| = K!$, since, for any choice of $\hatpi_2$, we will get a unique $\hatpi_1$, such that  $(\hatpi_1, \hatpi_2 ) \in \calH_K$, and for such an element, we have $\hatPi_1 \bPi_1^\top =  \hatPi_2 \bPi_2^\top$, which implies $d_H\left(\hatPi_{12},\bI_n\right) = 0$ where $\hatPi_{12}\coloneq \hatPi_2\bPi_2^\top\bPi_1 \hatPi_1^\top = \bdiag\left(\hatpi_2\bpi_2^\top\bpi_1 \hatpi_1^\top\right)$ and so we can define $\hatpi_{12} \coloneq \hatpi_2\bpi_2^\top\bpi_1 \hatpi_1^\top$. 

Next, note the fact that there is a bijection $(\hatpi_1,\hatpi_2) \mapsto (\hatpi_{12},\hatpi_2)$ as $\bpi_1,\bpi_2$ are assumed to be fixed constant permutation matrices, and thus $(\hatpi_{1},\hatpi_2) \in \calQ_K$ is equivalent to $(\hatpi_{12},\hatpi_2) \in \mathcal{P}_K \times \mathcal{P}_K \setminus \left( \bI_K, \bpi_2 \right) \eqqcolon \calQ^\ast_K$. Also, note that under $(\hatpi_{1},\hatpi_2) \in \calH_K$, we have $\hatpi_{12} = \bI_K$, and thus $(\hatpi_{1},\hatpi_2) \in \calH_K$ is equivalent to $(\hatpi_{12}, \hatpi_2) \in \calH^\ast_K \coloneq \left\{(\hatpi_{12},\hatpi_2) \in \calQ_K^\ast : \hatpi_{12} = \bI_K\right\}$ with $\left|\calH^\ast_K\right| = K!$. Now we break down the analysis into two parts for (1) $ \left( \hatpi_1, \hatpi_2 \right) \in \calH_K^c$ and (2) $ \left( \hatpi_1, \hatpi_2 \right) \in \calH_K$.

\textbf{Case 1 :} $ \left( \hatpi_1, \hatpi_2 \right) \in {\calH^\ast_K}^c$. First we find a bound on $\Prob\left[\Delta \left( (\hatPi_1, \hatPi_2 ), \left( \bPi_1, \bPi_2 \right) \right) \leq 0\mid (\hatpi_1, \hatpi_2 ) \in \calH_K^c \right]$. One thing to note here is that, we are not really considering the $(\hatpi_1, \hatpi_2 )$ as random variables, and just considering them as elements in $\calQ_K$. For $t_0 \geq 0$ we obtain:
    {
    \footnotesize
    \begin{eqnarray*}
        \Prob\left[\Delta \left( (\hatPi_1, \hatPi_2), \left( \bPi_1, \bPi_2 \right) \right) \leq 0 \mid (\hatpi_1, \hatpi_2 ) \in \calH_K^c \right]
        & = & \Prob\left(\bE_1 - (\bE_{21} - \bE_{22}) \leq 0 \right) \\
        & \leq & \Prob\left(\bE_1 - (\bE_{21} - \bE_{22}) < \beta^2\delta^\ast \right) \\
        & = & \Prob\left(\bE_1 - (\bE_{21} - \bE_{22}) < \beta^2\delta^\ast , \left\|\bT_{\bPi_1, \bPi_2} \right\|^2 >  t^\ast\right) \\
        && + \Prob\left(\bE_1 - (\bE_{21} - \bE_{22}) < \beta^2\delta^\ast , \left\|\bT_{\bPi_1, \bPi_2} \right\|^2 <
         t^\ast\right)
    \end{eqnarray*}}
    where $\bT_{\bPi_1, \bPi_2} \coloneqq \bP_{\hatPi_1, X}^\perp {\bSig}^{-1/2} \hatPi_2 \bPi_2^\top \bPi_1 \bX$ and $\delta^\ast = c\left\|\bT_{\bPi_1, \bPi_2} \right\|^2$ for some constant $c>0$. Notice that we can simplify  $\bT_{\bPi_1, \bPi_2} = \bP_{\hatPi_1, X}^\perp {\bSig}^{-1/2}\hatPi_{12}\hatPi_1\bX$. Note that for $\left( \hatpi_1, \hatpi_2 \right) \in \calH_K^c$, we have $d_H\left(\hatPi_{12},\bI_n\right)>0$ and thus $\bT_{\bPi_1, \bPi_2} \neq \bzero$ because if the Hamming distance is 0 which implies $\hatPi_{12} = \bI_n$, then since $\bP_{\hatPi_1, X}^\perp$ denotes the projection matrix on the column space orthogonal to $\bSig^{-1/2}\hatPi_1\bX$, we would have $\bT_{\bPi_1, \bPi_2}=\bzero$. Thus for $\left( \hatpi_1, \hatpi_2 \right) \in \calH_K^c$, the choice of $\delta^\ast$ is valid as it will be positive a.s. Hence, the RHS of the above inequality can be further bounded as follows:
    {
    \footnotesize
    \begin{eqnarray*}
        \Prob\left(\Delta \left( (\hatPi_1, \hatPi_2 ), \left( \bPi_1, \bPi_2 \right) \right) \leq 0 \mid (\hatpi_1, \hatpi_2 ) \in \calH_K^c \right)
        & \leq & \Prob\left(\bE_1 - (\bE_{21} - \bE_{22}) < \beta^2\delta^\ast \ {\Big|} \ \left\|\bT_{\bPi_1, \bPi_2} \right\|^2 > t^\ast\right) \\
        && + \Prob\left(\left\|\bT_{\bPi_1, \bPi_2} \right\|^2 <
        \ t^\ast\right)\\
        & \leq & \Prob\left(\bE_1 < 2\beta^2\delta^\ast\ {\Big|} \ \left\|\bT_{\bPi_1, \bPi_2} \right\|^2 > t^\ast\right)\\
        && + \Prob\left(\bE_{21} - \bE_{22} > \beta^2\delta^\ast \ {\Big|} \ \left\|\bT_{\bPi_1, \bPi_2} \right\|^2 > t^\ast\right)\\
        && + \Prob\left(\left\|\bT_{\bPi_1, \bPi_2} \right\|^2 <
        \ t^\ast\right) \\
        & \leq & \Prob\left(\bE_1 < 2\beta^2\delta^\ast\ {\Big|} \ \left\|\bT_{\bPi_1, \bPi_2} \right\|^2 > t^\ast\right)\\
        && + \Prob\left(|\bE_{21}| > {\beta^2\delta^\ast}/{2} \ {\Big|} \ \left\|\bT_{\bPi_1, \bPi_2} \right\|^2 > t^\ast\right)\\
        && + \Prob\left(|\bE_{22}| > {\beta^2\delta^\ast}/{2} \ {\Big|} \ \left\|\bT_{\bPi_1, \bPi_2} \right\|^2 > t^\ast\right)\\
        && + \Prob\left(\left\|\bT_{\bPi_1, \bPi_2} \right\|^2 <
        t^\ast\right).
    \end{eqnarray*}}
    Let us denote $d_H(\hatpi_2,\bpi_2) = k_2$ which implies $d_H\left(\hatPi_2,\bPi_2\right)=d_H\left(\hatPi_2\bPi_2^\top,\bI_n\right) \coloneqq h_2 = k_2B$.
    Similarly, for $d_H\left(\hatpi_1 \bpi_1^\top ,  \hatpi_2 \bpi_2^\top\right) = k_{12}$, we have $d_H\left(\hatPi_{12},\bI_n\right) \coloneq h_{12}= k_{12}B$. Observe the fact that $2 \leq k_2 \leq K$, and under this case we have $2 \leq k_{12}\leq K$.
    
    Now we set $t^\ast = \nicefrac{t}{\lambda_{\max}(\bSig)}$, then for a choice of $t \in \left[\dfrac{h_2}{\snr},h_{12}\right]$, from Propositions \ref{prop:E1-dist-given-X}, \ref{prop:E21-dist-given-X}, \ref{prop:E22-dist-cond-X},  and \ref{prop:prob-T-less-t},  with the above inequality we get
    \begin{eqnarray*}
        &&\Prob\left[\Delta \left( (\hatPi_1, \hatPi_2 ), \left( \bPi_1, \bPi_2 \right) \right) \leq 0 \mid (\hatpi_1, \hatpi_2 ) \in \calH_K^c\right]\\
        & \leq & \exp\left(-c_1^\ast\dfrac{\beta^2}{\kappa(\bSig)}\cdot \dfrac{t}{\lambda_{\max}(\bSig)}\right)  \\
        && + \left(-c^\ast_{21}\beta^2\cdot \dfrac{t}{\lambda_{\max}(\bSig)}\right)\\
        && +  \exp \left( - c^\ast_{22} \, \snr \, \min \left\{\frac{\snr}{h_2}\left(t - \dfrac{h_2}{\snr}\right)^2, \, \left(t - \dfrac{h_2}{\snr}\right) \right\}\right)\\
        && + 6 \exp\left(-\frac{h_{12}}{10}\left[\log \frac{h_{12}}{t} + \dfrac{t}{h_{12}} -1\right]\right)\\
        & = & \exp\left(-c^\ast_1\snr \ t\right)\\
        && + \exp\left(-c^\ast_{21} \kappa(\bSig) \cdot \snr \ t\right)\\
        && +  \exp \left( - c^\ast_{22} \, \snr \, \min \left\{\frac{\snr}{h_2}\left(t - \dfrac{h_2}{\snr}\right)^2, \, \left(t - \dfrac{h_2}{\snr}\right) \right\}\right)\\
        && + 6 \exp\left(-\frac{h_{12}}{10}\left[\log \frac{h_{12}}{t} + \dfrac{t}{h_{12}} -1\right]\right).
    \end{eqnarray*}

A valid choice for $t$ for sufficiently large $B$ will be $t = k_{12}\dfrac{h_2 + \log \snr}{\snr}$ where $h_2 = k_2B$. For more details refer to Proposition \ref{prop:choice-t}. 

\textbf{Case 2 :} $(\hatpi_1, \hatpi_2 ) \in \calH^\ast_K$. Under this case, we have $\bE_1 = 0$. Thus, we have:
$$
\begin{aligned}
\left\{\Delta \left( (\hatPi_1, \hatPi_2 ), \left( \bPi_1, \bPi_2 \right) \right) \leq 0 \mid (\hatpi_1, \hatpi_2 ) \in \calH_K \right\} & = \{\bE_{22} - \bE_{21} >0 \}\\
& \subseteq \left\{\left|\bE_{22}\right| > \frac {t'}2\right\} \cup \left\{\left|\bE_{21}\right| > \frac {t'}2\right\}
\end{aligned}
$$
Now, if we see the proof of Proposition \ref{prop:E21-dist-given-X}, then we can see that for some $t'>0$, we have $ \Prob(|\bE_{21}|> \nicefrac{t'}{2})\leq c'\exp\left(-\frac{c_{21}}{2\sqrt{2}}t'\right)$ independent of the distribution of $\bX$. Similarly following the proof of Proposition \ref{prop:E22-dist-cond-X}, gives us the following unconditional concentration for some $t^\ast > \sqrt{2}h_2\kappa(\bSig)$:
\begin{equation*}
        \Prob(|\bE_{22}|> t^\ast)\leq \exp \left( - c_{22} \, \min \left\{ \frac{(t^\ast-\sqrt{2}h_2\kappa(\bSig))^2}{2h_2\kappa^2({\bSig})}, \, \frac{t^\ast-\sqrt{2}h_2\kappa(\bSig)}{\sqrt{2}\kappa(\bSig)} \right\} \right)
\end{equation*}

Let us choose $ t' = 4\sqrt{2}h_2 \kappa(\boldsymbol{\Sigma})$, which gives us $t'/2 - \sqrt{2}h_2\kappa(\bSig) = 2\sqrt{2}h_2 \kappa(\boldsymbol{\Sigma}) - \sqrt{2}h_2\kappa(\bSig) = \sqrt{2}h_2\kappa(\bSig) $. Then as $k_2 \geq 2$, the following concentration inequality holds:
\begin{equation}
    \mathbb{P}\left( |\bE_{22}| > \frac{t'}{2} \right)
    \leq  \exp\left( - c_{22}\min\left\{ h_2, h_2 \right\} \right) = \exp\left( -c_{22}k_2B \right).
\end{equation}

Similarly for this choice of $t'>0$, we have:
\begin{equation}
    \mathbb{P}\left( |\bE_{21}| > \frac{t'}{2} \right)
    \leq  \exp\left( -c^\ast_{21}k_2B \right).
\end{equation}
for a constant $c_{21}^\ast > 0$. Thus for the specific $t'$ combining the above two concentrations, we can say that:
\begin{eqnarray*}
    \Prob\left[\Delta \left( (\hatPi_1, \hatPi_2 ), \left( \bPi_1, \bPi_2 \right) \right) \leq 0 \mid (\hatpi_1, \hatpi_2 ) \in \calH_K \right] & \leq & \Prob\left(\left|\bE_{22}\right| > \frac {t'}2\right) + \Prob\left(\left|\bE_{21}\right| > \frac {t'}2\right)\\
    & \leq & \exp\left( -c^\ast k_2B \right).
\end{eqnarray*}
where $c^\ast$ is dependent on $c_{21}^\ast, c_{22}$. Also note that:
\begin{equation}\label{eq:hk-star-bound}
    \sum_{k_2=2}^K K^{k_2}  e^{-c^\ast k_2B} = \sum_{k_2=2}^K \left(Ke^{-c^\ast B}\right)^{k_2} \leq \frac{\big(K e^{-c^\ast B}\big)^{2}}{1-K e^{-c^\ast B}} = \bigo\left(K^2 e^{-2c^\ast B}\right)
\end{equation}
where the last equality follows given $B>\tfrac{\log k}{c^\ast}$ which will imply $1-K e^{-c^\ast B}\ge 1-K^{-\varepsilon}$.

Thus, using the equivalence between the sets $\calQ_K$ and $\calQ^\ast_K$, using Proposition \ref{prop:choice-t} and Equation \ref{eq:hk-star-bound}, we can say that:
\begin{eqnarray*}
    \mathbb{P} \left( \left( \hatPi_{1, \texttt{ML}}, \hatPi_{2, \texttt{ML}} \right) \neq \left( \bPi_1, \bPi_2 \right) \right) 
    & \leq & \sum_{(\hatpi_1, \hatpi_2 ) \in \calQ_K} \mathbb{P} \left( \Delta \left( (\hatPi_1, \hatPi_2 ), \left( \bPi_1, \bPi_2 \right) \right) \leq 0  \right) \notag \\
    & = & \sum_{(\hatpi_{12}, \hatpi_2 ) \in \calQ^\ast_K} \mathbb{P} \left( \Delta \left( (\hatPi_1, \hatPi_2 ), \left( \bPi_1, \bPi_2 \right) \right) \leq 0  \right) \notag \\
    & = & \sum_{(\hatpi_1, \hatpi_2 ) \in \calH^\ast_K} \mathbb{P} \left( \cdots \leq 0  \right) \\
    && + \sum_{(\hatpi_{12}, \hatpi_2 ) \in \calH^{*^c}_K} \mathbb{P} \left( \cdots  \leq 0 \right) \notag \\
    & = & \bigo\left(K^{2} e^{-2c^\ast B}\right) \\
    && + \bigo\left(K^4 \snr ^{-c_1}e^{-2c_1B}\right) + \bigo\left(K^2 e^{-c_0 B}\right)\\
    & = & \bigo\left(K^4 \snr ^{-c^\ast_1}e^{-2c^\ast_1B}\right) + \bigo\left(K^2 e^{-c^\ast_2 B}\right)
\end{eqnarray*}
for appropriate choices of $c_1^\ast,c_2^\ast>0$. This completes the proof of the theorem.
\end{proof}
\newpage

\subsection{Proof of Theorem \ref{thm:error-betahat}}\label{thm:error-betahat-proof}

\begin{proof}
    First recall the DGP that, $\widetilde{\bY}_{{\bPi}_2} = \widetilde{\bX}_{{\bPi}_1}\beta + \beps$, and observe that  $\widetilde{\bY}_{\hatPi_2}= \bM_2 \widetilde{\bY}_{{\bPi}_2}, \widetilde{\bX}_{\hatPi_1}= \bM_1 \widetilde{\bX}_{{\bPi}_1}$, where $\bM_2= \bSigma^{-{1}/{2}}\hatPi_2 \bPi_2^\top \bSigma^{{1}/{2}}$ and $\bM_1 = \bSigma^{-{1}/{2}}\hatPi_1 \bPi_1^\top \bSigma^{{1}/{2}}$. 
Let us look at the bias in estimating $\beta$:
{
\footnotesize
\begin{eqnarray}\label{eq:betahat-bias}
        \hat{\beta} & = &\left(\widetilde{\bX}_{\hatPi_1}^\top  \widetilde{\bX}_{\hatPi_1}\right)^{-1} \widetilde{\bX}_{\hatPi_1}^\top \widetilde{\bY}_{\hatPi_2} \notag\\
    & = &\left(\widetilde{\bX}_{\hatPi_1}^\top  \widetilde{\bX}_{\hatPi_1}\right)^{-1} \widetilde{\bX}_{\hatPi_1}^\top \bM_2 \widetilde{\bY}_{{\bPi}_2} \notag\\
    & = & \left(\widetilde{\bX}_{\hatPi_1}^\top  \widetilde{\bX}_{\hatPi_1}\right)^{-1} \widetilde{\bX}_{\hatPi_1}^\top \left(\bM_2 \widetilde{\bX}_{{\bPi}_1}\beta + \bM_2 \beps\right) \notag\\ 
    & = & \left(\widetilde{\bX}_{\hatPi_1}^\top  \widetilde{\bX}_{\hatPi_1}\right)^{-1} \widetilde{\bX}_{\hatPi_1}^\top \bM_1 \widetilde{\bX}_{{\bPi}_1} \beta + \left(\widetilde{\bX}_{\hatPi_1}^\top  \widetilde{\bX}_{\hatPi_1}\right)^{-1} \widetilde{\bX}_{\hatPi_1}^\top (\bM_2 -\bM_1)\widetilde{\bX}_{{\bPi}_1}\beta \notag\\
    && + \left(\widetilde{\bX}_{\hatPi_1}^\top  \widetilde{\bX}_{\hatPi_1}\right)^{-1} \widetilde{\bX}_{\hatPi_1}^\top \bM_2 \beps\notag\\
    & = & \beta + \beta \left(\widetilde{\bX}_{\hatPi_1}^\top  \widetilde{\bX}_{\hatPi_1}\right)^{-1} \widetilde{\bX}_{\hatPi_1}^\top (\bM_2 -\bM_1)\widetilde{\bX}_{{\bPi}_1} + \left(\widetilde{\bX}_{\hatPi_1}^\top  \widetilde{\bX}_{\hatPi_1}\right)^{-1}
    \widetilde{\bX}_{\hatPi_1}^\top \bM_2 \beps\notag\\
    \implies \hat{\beta} - \beta & = & \beta \left(\widetilde{\bX}_{\hatPi_1}^\top  \widetilde{\bX}_{\hatPi_1}\right)^{-1} \widetilde{\bX}_{\hatPi_1}^\top \left(\bM_2\bM_1^{-1} -\bI\right)\widetilde{\bX}_{\hatPi_1} + \left(\widetilde{\bX}_{\hatPi_1}^\top  \widetilde{\bX}_{\hatPi_1}\right)^{-1} \widetilde{\bX}_{\hatPi_1}^\top \bM_2 \beps\notag\\
& = & \beta \left(\widetilde{\bX}_{\hatPi_1}^\top  \widetilde{\bX}_{\hatPi_1}\right)^{-1} \widetilde{\bX}_{\hatPi_1}^\top \left(\bSigma^{-{1}/{2}}\hatPi_{12} \bSigma^{{1}/{2}} -\bI\right)\widetilde{\bX}_{\hatPi_1} + \left(\widetilde{\bX}_{\hatPi_1}^\top  \widetilde{\bX}_{\hatPi_1}\right)^{-1} \widetilde{\bX}_{\hatPi_1}^\top \bM_2 \beps\notag\\
& = & \beta \left(\bX^\top {\hatPi_1}^\top \bSigma^{-1} {\hatPi_1} \bX\right)^{-1} \bX^\top {\hatPi_1}^\top \bSigma^{-1} \left(\hatPi_{12}- \bI\right) {\hatPi_1} \bX + \left(\widetilde{\bX}_{\hatPi_1}^\top  \widetilde{\bX}_{\hatPi_1}\right)^{-1} \widetilde{\bX}_{\hatPi_1}^\top \bM_2 \beps\notag\\
\implies \norm{\hat{\beta} - \beta}_2 & \leq & \norm{\beta}_2\cdot \norm{\dfrac{\bX^\top {\hatPi_1}^\top \bSig^{-1} \left(\hatPi_{12} - \bI\right) {\hatPi_1} \bX}{\bX^\top {\hatPi_1}^\top \bSig^{-1} {\hatPi_1} \bX}}_2 + \norm{\dfrac{\widetilde{\bX}_{\hatPi_1}^\top \bM_2 \beps}{\widetilde{\bX}_{\hatPi_1}^\top  \widetilde{\bX}_{\hatPi_1}}}_2\notag\\
& \leq & \norm{\beta}_2\cdot \dfrac{\norm{\bSigma^{-1} \left(\hatPi_{12}- \bI\right)}_2}{\lambda_{\min}(\Siginv)} + \norm{\dfrac{\widetilde{\bX}_{\hatPi_1}^\top \bM_2 \beps}{\widetilde{\bX}_{\hatPi_1}^\top  \widetilde{\bX}_{\hatPi_1}}}_2\notag\\
& \leq & \norm{\beta}_2\cdot \kappa(\bSig)\norm{\hatPi_{12}- \bI}_2 + \dfrac{1}{\sqrt{n}}\norm{\dfrac{\widetilde{\bX}_{\hatPi_1}^\top \bM_2 \beps/\sqrt{n}}{\widetilde{\bX}_{\hatPi_1}^\top  \widetilde{\bX}_{\hatPi_1}/n}}_2
\end{eqnarray}}

Recall that $\hatPi_{12} = \hatPi_2 \bPi_2^\top \bPi_1 \hatPi_1^\top$. We are interested in the event $\left\{\hatPi_{12} \ne \bI\right\}$. Observe that the following is true:
\begin{equation}
\label{eq:selection}
\left\{\hatPi_{12} \ne \bI\right\} \subseteq \underset{(\hatPi_1, \hatPi_2) \in \calQ_K:\hatPi_{12}\neq \bI}{\bigcup}\left\{\left\| \bP_{\hatPi_1, X}^\perp \widetilde{\bY}_{{\hatPi_2}}\right\|_2^2 
<
\left\| \bP_{\hatPi_1, X}^\perp \tilY_{\hatPi_1 \bPi_1^\top \bPi_2} \right\|_2^2\right\},
\end{equation}
So, $$\Prob\left(\hatPi_{12} \ne \bI\right) = \sum_{(\hatPi_1, \hatPi_2) \in \calQ_K:\hatPi_{12}\neq \bI} \Prob\left(\left\| \bP_{\hatPi_1, X}^\perp \tilY_{{\hatPi_2}}\right\|_2^2 
< \left\| \bP_{\hatPi_1, X}^\perp \tilY_{\hatPi_1 \bPi_1^\top \bPi_2} \right\|_2^2\right).$$
Now define, $\bM_1 = \bSigma^{-1/2}\hatPi_1 \bPi_1^\top \bSigma^{1/2}$,  $\bM_2 = \bSigma^{-1/2}\hatPi_2 \bPi_2^\top \bSigma^{1/2}$, and observe that $\bM_1 =\bSigma^{-1/2} \hatPi_{12}^\top \bSigma^{1/2} \bM_2$ and $\tilY_{\hatPi_2} = \tilX_{\hatPi_2\bPi_2^\top \bPi_1} \beta + \bM_2 \beps$ with $\tilY_{\hatPi_1 \bPi_1^\top \bPi_2} = \tilX_{\hatPi_1} \beta + \bM_1 \beps.$

Then, 
\begin{eqnarray*}
\left\| \bP_{\hatPi_1, X}^\perp \tilY_{\hatPi_2}\right\|_2^2 -
\left\| \bP_{\hatPi_1, X}^\perp \tilY_{\hatPi_1 \bPi_1^\top \bPi_2} \right\|_2^2 & = &\norm{\bP_{\hatPi_1, X}^\perp \tilX_{\hatPi_2\bPi_2^\top \bPi_1} \beta}_2^2 + 2 \beta \left\langle \bP_{\hatPi_1, X}^\perp \tilX_{\hatPi_2\bPi_2^\top \bPi_1} , \bM_2 \beps\right\rangle \\
&& -\left(\norm{ \bP_{\hatPi_1, X}^\perp \bM_1 \beps}_2^2 - \norm{ \bP_{\hatPi_1, X}^\perp \bM_2 \beps}_2^2\right) \\
& = &\underbrace{\norm{\bP_{\hatPi_1, X}^\perp \tilX_{\hatPi_2\bPi_2^\top \bPi_1} \beta}_2^2 + 2 \beta \left\langle \bP_{\hatPi_1, X}^\perp \tilX_{\hatPi_2\bPi_2^\top \bPi_1} , \bM_2 \beps\right\rangle}_{\bE_1} \\
&& - \underbrace{\beps^\top \left(\bM_1^\top \bP_{\hatPi_1, X}^\perp  \bM_1 - \bM_2^\top \bP_{\hatPi_1, X}^\perp  \bM_2\right)\beps}_{\bE_4} 
\end{eqnarray*}

Let us define, $\bA_4 =\bM_1^\top \bP_{\hatPi_1, X}^\perp  \bM_1 - \bM_2^\top \bP_{\hatPi_1, X}^\perp  \bM_2 = \left(\bM_1^\top \bM_1 - \bM_2^\top \bM_2\right)- \left(\bM_1^\top \bu\bu^\top \bM_1 - \bM_2^\top \bu\bu^\top \bM_2\right)$, for some $\bu \in \R^n$ with $\norm{\bu}_2=1$. By Von-Neumann Lemma \ref{lem:von-neuman}, we can say:
\begin{align}
    &\left( \tr\left(\bM_1^\top \bM_1 \right) - \tr\left(\bM_2^\top \bM_2\right) \right) 
    - \left( \lambda_{\max}\left(\bM_1 \bM_1^\top\right) - \lambda_{\min}\left(\bM_2 \bM_2^\top\right) \right) 
    \leq \tr(\bA_4) \notag \\
    &\leq \left( \tr\left(\bM_1^\top \bM_1 \right) - \tr\left(\bM_2^\top \bM_2\right) \right) 
    - \left( \lambda_{\min}\left(\bM_1 \bM_1^\top\right) - \lambda_{\max}\left(\bM_2 \bM_2^\top\right) \right)
\end{align}
This means that
\begin{eqnarray*}
    \tr(\bA_4) \in \left( \tr\left(\bM_1^\top \bM_1 \right) - \tr\left(\bM_2^\top \bM_2\right) \right) 
    \pm \left( \kappa(\bSigma) - \frac{1}{\kappa(\bSigma)} \right)
\end{eqnarray*}

$\bA_4$ can be written as: 
\begin{align}
\bA_4 &= \bM_1^\top \bP_{\hatPi_1, X}^\perp \bM_1 - \bM_2^\top \bP_{\hatPi_1, X}^\perp \bM_2 \notag \\
&= \bM_2^\top \left( \bSig^{1/2} \hatPi_{12} \bSig^{-1/2} \bP_{\hatPi_1, X}^\perp \bSig^{-1/2} \hatPi_{12}^\top \bSig^{1/2} - \bP_{\hatPi_1, X}^\perp \right) \bM_2 \notag \\
&= \bM_2^\top \bSig^{1/2} \left( \hatPi_{12} \bSig^{-1/2} \bP_{\hatPi_1, X}^\perp \bSig^{-1/2} \hatPi_{12}^\top - \bSig^{-1/2} \bP_{\hatPi_1, X}^\perp \bSig^{-1/2} \right) \bSig^{1/2} \bM_2 \notag\\
&= \bM_2^\top \bSigma^{1/2} \left( \hatPi_{12} \widetilde{\bSigma^{-1}} \hatPi_{12}^\top - \widetilde{\bSigma^{-1}} \right)\bSigma^{1/2} \bM_2 \notag\\
& = \bM_2^\top \bSigma^{1/2} \underbrace{\left( \left(\hatPi_{12} - \bI\right) \widetilde{\bSig^{-1}}  \hatPi_{12}^\top + \widetilde{\bSig^{-1}} \left(\hatPi_{12}^\top - \bI\right) \right)}_{\eqqcolon \bA_6;\textrm{ rank}(\bA_6)\leq 2h_{12}} \bSig^{1/2} \bM_2 \notag
\end{align}
where we define:
\begin{equation}
\widetilde{\bSigma^{-1}} := \bSig^{-1/2} \bP_{\hatPi_1, X}^\perp \bSig^{-1/2}.
\end{equation}
Hence, $\textrm{rank}(\bA_6) \leq 2h_{12}$, $\norm{\bA_6}_2 \leq \frac{2}{\lambda_{\min}(\bSigma)}$. We also have:
$$\norm{\bA_6}_F \leq \norm{(\hatPi_{12} - I)}_F \norm{\widetilde{\bSigma^{-1}}  \hatPi_{12}^\top}_2 + \norm{\widetilde{\bSigma^{-1}}}_2 \norm{(\hatPi_{12}^\top - I)}_F= 2\frac{\sqrt{2h_{12}}}{\lambda_{\min}(\bSigma)}$$
Using these results, we have $\textrm{rank}(\bA_4) \leq 2h_{12}$ and $\norm{\bA_4}_2 \leq 2\kappa(\bSigma)$.  
Finally, again by Lemma \ref{lem:von-neuman} we have:
\begin{eqnarray*}
    \tr(\bA_4)& = &\tr\left(\bA_6 \bSig^{1/2} \bM_2 \bM_2^\top \bSig^{1/2}\right)\\
    \implies |\tr(\bA_4)| & \leq & \sigma_{\max}\left({\bSig^{1/2} \bM_2 \bM_2^\top \bSigma^{1/2}}\right)\sum_{i=1}^{\textrm{rank}(\bA_6)}\sigma_i(\bA_6) \\
    & \leq & \lambda_{\max}({\bSigma})\sqrt{2h_{12}} \norm{\bA_6}_F \\
    & \leq & 4h_{12} \kappa(\bSigma)
\end{eqnarray*}
\newpage

Following the proof of Proposition \ref{prop:E22-dist-cond-X}, we have:
\begin{eqnarray*}
    \mathbb{P} \left(  |\bE_4|  > t^\ast \right)
    & \leq &  \exp \left( - \dfrac{c_{4}}{2} \, \min \left\{ \frac{(t^\ast-4h_{12} \kappa(\bSig))^2}{8h_{12}\kappa^2({\bSig})}, \, \frac{t^\ast -4h_{12} \kappa(\bSigma)}{2\kappa({\bSig})} \right\} \right).
\end{eqnarray*}
Thus plugging in $t^\ast = \beta^2\delta_0$ with $\delta_0 \coloneq 4\left\|\bT_{\bPi_1, \bPi_2} \right\|^2 $, and conditioning on $\left\|\bT_{\bPi_1, \bPi_2} \right\|^2> \nicefrac{t}{\lambda_{\max}(\bSig)}$ for some $t>0$, we have:
\begin{eqnarray*}
    t^\ast - 4h_{12}\kappa(\bSig)
    & = & \beta^2 \cdot 4\left\|\bT_{\bPi_1, \bPi_2} \right\|^2 - 4h_{12}\kappa(\bSig)\\
    & > & 4\left[\beta^2 \dfrac{t}{\lambda_{\max}(\bSig)} - h_{12}\kappa(\bSig)\right]\\
    & = & 4\left[{\kappa(\bSig)\ \snr}\ t - h_{12}\kappa(\bSig)\right]\\
    & = & 4\kappa(\bSig)\ \snr\left[t - \dfrac{h_{12}}{\snr}\right]\\
\end{eqnarray*}
Hence for $t \geq \frac{h_{12}}{\snr}$, we have the following conditional probability:
\begin{eqnarray*}
    \Prob\left(|\bE_4| > \beta^2\delta_0\right)\leq  \exp \left( - c_{4} \, \snr \, \min \left\{\frac{\snr}{h_{12}}\left(t - \dfrac{h_{12}}{\snr}\right)^2, \, \left(t - \dfrac{h_{12}}{\snr}\right) \right\}\right)
\end{eqnarray*}
Thus, we have:
\begin{eqnarray*}
    \Prob\left(\left\| \bP_{\hatPi_1, X}^\perp \tilY_{{\hatPi_2}}\right\|_2^2 
< \left\| \bP_{\hatPi_1, X}^\perp \tilY_{\hatPi_1 \bPi_1^\top \bPi_2} \right\|_2^2\right) & = & \Prob\left(\left\| \bP_{\hatPi_1, X}^\perp \tilY_{{\hatPi_2}}\right\|_2^2 
- \left\| \bP_{\hatPi_1, X}^\perp \tilY_{\hatPi_1 \bPi_1^\top \bPi_2} \right\|_2^2\leq 0\right)\\
& = & \Prob\left(\bE_1 - \bE_4 \leq \beta^2\delta_0\right)\\
& = & \Prob\left(\bE_1 - \bE_4 < \beta^2\delta_0 , \left\|\bT_{\bPi_1, \bPi_2} \right\|^2 >  t_0\right) \\
        && + \Prob\left(\bE_1 - \bE_4 < \beta^2\delta_0 , \left\|\bT_{\bPi_1, \bPi_2} \right\|^2 <
         t_0\right)\\
        & \leq & \Prob\left(\bE_1 - \bE_4 < \beta^2\delta_0 \ {\Big|} \ \left\|\bT_{\bPi_1, \bPi_2} \right\|^2 > t_0\right) \\
        && + \Prob\left(\left\|\bT_{\bPi_1, \bPi_2} \right\|^2 <
        \ t_0\right)\\
        & \leq & \Prob\left(\bE_1 < 2\beta^2\delta_0\ {\Big|} \ \left\|\bT_{\bPi_1, \bPi_2} \right\|^2 > t_0\right)\\
        && + \Prob\left(\bE_4 > \beta^2\delta_0 \ {\Big|} \ \left\|\bT_{\bPi_1, \bPi_2} \right\|^2 > t_0\right)\\     
         && + \Prob\left(\left\|\bT_{\bPi_1, \bPi_2} \right\|^2 <
        \ t_0\right)\\
\end{eqnarray*}
where we choose $\delta_0 = 4\left\|\bT_{\bPi_1, \bPi_2} \right\|^2$ and $t_0 = \nicefrac{t}{\lambda_{\max}(\bSig)}$ like in proof of Theorem \ref{thm:error-pi1-pi2}. Now for a choice of $t \in \left[\dfrac{h_{12}}{\snr},h_{12}\right]$, from Propositions \ref{prop:E1-dist-given-X},\ref{prop:E22-dist-cond-X} and \ref{prop:prob-T-less-t}, with the above inequality we get:

\begin{eqnarray*}
    &&\Prob\left(\left\| \bP_{\hatPi_1, X}^\perp \tilY_{{\hatPi_2}}\right\|_2^2 
- \left\| \bP_{\hatPi_1, X}^\perp \tilY_{\hatPi_1 \bPi_1^\top \bPi_2} \right\|_2^2 \leq 0\right) \\
& \leq & \exp\left(-c_1\dfrac{\beta^2}{\kappa^{2}(\bSig)}\cdot \dfrac{t}{\lambda_{\max}(\bSig)}\right)  \\
&& + \exp \left( - c_{4} \, \snr \, \min \left\{\frac{\snr}{h_{12}}\left(t - \dfrac{h_{12}}{\snr}\right)^2, \, \left(t - \dfrac{h_{12}}{\snr}\right) \right\}\right)\\
&& + 6 \exp\left(-\frac{h_{12}}{10}\left[\log \frac{h_{12}}{t} + \dfrac{t}{h_{12}} -1\right]\right)\\
        & = & \exp\left(-c_1\snr \ t\right)\\
        && +  \exp \left( - c_{4} \, \snr \, \min \left\{\frac{\snr}{h_{12}}\left(t - \dfrac{h_{12}}{\snr}\right)^2, \, \left(t - \dfrac{h_{12}}{\snr}\right) \right\}\right)\\
        && + 6 \exp\left(-\frac{h_{12}}{10}\left[\log \frac{h_{12}}{t} + \dfrac{t}{h_{12}} -1\right]\right)\\
\end{eqnarray*}
For {$\log \snr > 1$}, a valid choice for $t$ will be $t = h_{12}\dfrac{\log \snr}{\snr}$ where $h_{12} = k_{12}B$ since this $t$ will lie in the interval $\left[\dfrac{h_{12}}{\snr},h_{12}\right]$. For this choice of $t$, we can say that:
\begin{eqnarray*}
    &&\Prob\left(\left\| \bP_{\hatPi_1, X}^\perp \tilY_{{\hatPi_2}}\right\|_2^2 
- \left\| \bP_{\hatPi_1, X}^\perp \tilY_{\hatPi_1 \bPi_1^\top \bPi_2} \right\|_2^2 \leq 0\right) \\
& \leq & \exp\left(-c_1h_{12}\log \snr\right)\\
        && +  \exp \left( - c_{4} \, h_{12} \, \min \left\{\left[\log\left(\dfrac{\snr}{e}\right)\right]^2, \, \log\left(\dfrac{\snr}{e}\right) \right\}\right)\\
        && 6 \exp\left(-\frac{h_{12}}{10}\left[\log \frac{\snr}{\log \snr} + \dfrac{\log \snr}{\snr} -1\right]\right)\\
& \overset{(\ast)}{\leq} &   \exp\left(-c_1h_{12}\log \snr\right) \\
&& +  \exp \left( - c_{4} \, h_{12} \, \min \left\{\left[\log\left(\dfrac{\snr}{e}\right)\right]^2, \, \log\left(\dfrac{\snr}{e}\right) \right\}\right) \\
        &&+ 6 \exp \left( - c_0 \,  h_{12}\ \log \snr \right)\\
         & \leq & \exp(-c^\ast h_{12} \log \snr)  + \exp \left( - c_{4} \, h_{12} \, \min \left\{\left[\log\left(\dfrac{\snr}{e}\right)\right]^2, \, \log\left(\dfrac{\snr}{e}\right) \right\}\right)
\end{eqnarray*}
It can be verified easily that for $\snr > 1$, $\log\left(\frac{\snr}{\log \snr}\right)+\frac{\log \snr}{\snr} - 1 > \frac{\log \snr}{4}$. Observing the fact that $\min\{x,x^2\} \geq \frac{x^2}{1+x}$, defining $\phi(\snr)\coloneq \frac{(\log\snr-1)^2}{\log \snr}$,we can further upper bound the above by:
\begin{eqnarray*}
    \Prob\left(\left\| \bP_{\hatPi_1, X}^\perp \tilY_{{\hatPi_2}}\right\|_2^2 
- \left\| \bP_{\hatPi_1, X}^\perp \tilY_{\hatPi_1 \bPi_1^\top \bPi_2} \right\|_2^2 \leq 0\right) \leq \exp(-c^\ast h_{12} \log \snr) + \exp(-c_4 h_{12} \phi(\snr))
\end{eqnarray*}
Now, notice that for $\log \snr > 1$, we have:
\begin{eqnarray*}
    \log \snr - \phi(\snr) & = & \log\snr - \left(\log \snr + \frac{1}{\log \snr}-2\right)\\
    & = & 2 - \frac{1}{\log \snr} > 0
\end{eqnarray*}
Hence, using the last two bonds, for $B > \frac{(1+\gamma)\log K}{c_0^\ast\phi(\snr)}$ where $\gamma > 0$, denoting $r = c_0^\ast B \phi(\snr) - \log K$, we can say for a suitable choice of constant $c_1>0$:
\begin{eqnarray*}
    \Prob\left(\hatPi_{12} \ne \bI\right) &=& \sum_{(\hatPi_1, \hatPi_2) \in \calQ_K:\hatPi_{12}\neq \bI} \Prob\left(\left\| \bP_{\hatPi_1, X}^\perp \tilY_{{\hatPi_2}}\right\|_2^2 
< \left\| \bP_{\hatPi_1, X}^\perp \tilY_{\hatPi_1 \bPi_1^\top \bPi_2} \right\|_2^2\right)\\
&\leq & \sum_{(\hatPi_1, \hatPi_2) \in \calQ_K:\hatPi_{12}\neq \bI} e^{-c_0^\ast h_{12}\phi( \snr)}\\
& \leq & \sum_{k_{12}=2}^K K^{k_{12}} e^{-c_0^\ast k_{12}B\phi(\snr)}\\
& = & \sum_{k_{12} = 2}^K e^{-rk_{12}}\leq \frac{\exp(-2r)}{1 - \exp(-r)} \leq  c_1 ^2 e^{-2B \phi(\snr)}
\end{eqnarray*}

Now let us talk about the noise term in the upper bound of the bias term \eqref{eq:betahat-bias}. Note that we can write $\norm{\frac{\widetilde{\bX}_{\hatPi_1}^\top \bM_2 \beps/\sqrt{n}}{\widetilde{\bX}_{\hatPi_1}^\top  \widetilde{\bX}_{\hatPi_1}/n}}_2 = \|\bA\beps\|_2$ where $\beps \sim \calN(\bzero,\bI)$ and $\bA \coloneq \left(\frac{\widetilde{\bX}_{\hatPi_1}^\top  \widetilde{\bX}_{\hatPi_1}}{n}\right)^{-1}\frac{\widetilde{\bX}_{\hatPi_1}^\top \bM_2}{\sqrt{n}}$ where $\widetilde{\bX}_{\hatPi_1} = {\bSig}^{- \nicefrac{1}{2}}\hatPi_1 \bX$. Next we define:
\begin{eqnarray*}
    \bGam \coloneq \bA^\top\bA = \left(\frac{\widetilde{\bX}_{\hatPi_1}^\top  \widetilde{\bX}_{\hatPi_1}}{n}\right)^{-2}\dfrac{\bM_2^\top\widetilde{\bX}_{\hatPi_1}\widetilde{\bX}_{\hatPi_1}^\top \bM_2}{n}
\end{eqnarray*}
and note that: 
\begin{eqnarray*}
    \tr(\bGam) = \sqrt{\tr(\bGam^2)} = \|\bGam\|_2 & = & n\cdot \dfrac{\widetilde{\bX}_{\hatPi_1}^\top \bM_2\bM_2^\top\widetilde{\bX}_{\hatPi_1}}{\left(\widetilde{\bX}_{\hatPi_1}^\top  \widetilde{\bX}_{\hatPi_1}\right)^{2}}\\
    & \leq & n\cdot\dfrac{\left\|\bM_2\bM_2^\top\right\|_2}{\bX^\top\hatPi_1^\top\bSig^{-1}\hatPi_1\bX}\\
    & \leq & \dfrac{\lambda_{\max}(\bSig)\cdot\kappa(\bSig)}{\|\bX/\sqrt{n}\|^2_2}
\end{eqnarray*}
Thus invoking Lemma \ref{lem:concen-gaus-qf} for a fixed value of $\bX$, and choosing $t = f(n) > 1$ we have:
    {\small
    \begin{eqnarray*}
        \Prob\left(\|\bA\beps\|_2> \dfrac{\sqrt{\lambda_{\max}(\bSig)\cdot\kappa(\bSig)}}{||\bX/\sqrt{n}||_2}\sqrt{5t}\ \Bigg|\ \bX\right)
         & \leq  & \pr\left(||\bA\beps||^2_2> \dfrac{\lambda_{\max}(\bSig)\cdot\kappa(\bSig)}{||\bX/\sqrt{n}||^2_2}5t\ \Bigg|\ \bX\right)\\
        & \leq & \pr\left(||\bA\beps||^2_2> \dfrac{\lambda_{\max}(\bSig)\cdot\kappa(\bSig)}{||\bX/\sqrt{n}||^2_2}(1 + 2\sqrt{t} + 2t)\, \Bigg|\, \bX\right)\\
        & \leq & \pr\left(||\bA\beps||^2_2> \tr(\bGam) + 2\sqrt{\tr(\bGam^2)t} + 2||\bGam||_2t\,\Bigg|\,\bX\right)\\
        & \leq & \exp(-t)
    \end{eqnarray*}}
    This implies that:
    $$
    \Prob\left(\norm{\frac{\widetilde{\bX}_{\hatPi_1}^\top \bM_2 \beps/\sqrt{n}}{\widetilde{\bX}_{\hatPi_1}^\top  \widetilde{\bX}_{\hatPi_1}/n}}_2\geq \dfrac{\sqrt{\lambda_{\max}(\bSig)\cdot\kappa(\bSig)}}{||\bX/\sqrt{n}||_2}\sqrt{5f(n)}\ \Bigg|\ \bX\right) \leq \exp(-f(n))
    $$
Next assuming the entries of $\bX$ are iid $\calN(0,1)$ random variables, using Lemma \ref{lem:concen-chisd}, we have:
$$
\pr\left(\left|\left|\bX/\sqrt
        n\right|\right|_2< 1 - \sqrt{\dfrac{g(n)}{n}}\right) \leq \exp(-g(n))
$$
Thus, combining the results, we have:
{
\scriptsize
$$
\begin{aligned}
    & \pr\left(\left\{\left|\left|\bX/\sqrt
        n\right|\right|_2< 1 - \sqrt{\dfrac{g(n)}{n}}\right\}\cup\left\{\norm{\frac{\widetilde{\bX}_{\hatPi_1}^\top \bM_2 \beps/\sqrt{n}}{\widetilde{\bX}_{\hatPi_1}^\top  \widetilde{\bX}_{\hatPi_1}/n}}_2\geq \sqrt{\lambda_{\max}(\bSig)\cdot\kappa(\bSig)}\dfrac{\sqrt{5f(n)}}{1 - \sqrt{\dfrac{g(n)}{n}}}\right\}\right)\\
        \leq & \pr\left(\left\{\left|\left|\bX/\sqrt
        n\right|\right|_2< 1 - \sqrt{\dfrac{g(n)}{n}}\right\}\cup\left\{\norm{\frac{\widetilde{\bX}_{\hatPi_1}^\top \bM_2 \beps/\sqrt{n}}{\widetilde{\bX}_{\hatPi_1}^\top  \widetilde{\bX}_{\hatPi_1}/n}}_2\geq \sqrt{\lambda_{\max}(\bSig)\cdot\kappa(\bSig)}\dfrac{\sqrt{5f(n)}}{\left|\left|\bX/\sqrt
        n\right|\right|_2},\left|\left|\bX/\sqrt
        n\right|\right|_2\geq  1 - \sqrt{\dfrac{g(n)}{n}}\right\}\right)\\
        \leq & \pr\left(\left|\left|\bX/\sqrt
        n\right|\right|_2< 1 - \sqrt{\dfrac{g(n)}{n}}\right) + \pr\left(\norm{\frac{\widetilde{\bX}_{\hatPi_1}^\top \bM_2 \beps/\sqrt{n}}{\widetilde{\bX}_{\hatPi_1}^\top  \widetilde{\bX}_{\hatPi_1}/n}}_2\geq \sqrt{\lambda_{\max}(\bSig)\cdot\kappa(\bSig)}\dfrac{\sqrt{5f(n)}}{\left|\left|\bX/\sqrt
        n\right|\right|_2}\Bigg| \bX\right)\\
        \leq & \exp(-g(n)) + \exp(-f(n)) \\
        \leq & 2\exp(- f(n)\wedge g(n)) 
\end{aligned}
$$}
Thus, we have:
\begin{eqnarray*}
 \left|\hat{\beta} - \beta\right| & \leq & \norm{\beta}_2\cdot \kappa(\bSig)\norm{\hatPi_{12}- \bI}_2 + \dfrac{1}{\sqrt{n}}\norm{\dfrac{\widetilde{\bX}_{\hatPi_1}^\top \bM_2 \beps/\sqrt{n}}{\widetilde{\bX}_{\hatPi_1}^\top  \widetilde{\bX}_{\hatPi_1}/n}}_2\\
 & \leq & 0 + \dfrac{1}{\sqrt{n}}\sqrt{\lambda_{\max}(\bSig)\cdot\kappa(\bSig)}\dfrac{\sqrt{5f(n)}}{1 - \sqrt{\dfrac{g(n)}{n}}}\\
 & = & \sqrt{5\lambda_{\max}(\bSig)\cdot\kappa(\bSig)}\cdot \dfrac{\sqrt{\dfrac{f(n)}{n}}}{1 - \sqrt{\dfrac{g(n)}{n}}}
\end{eqnarray*}
with at least probability $p$, where:
\begin{eqnarray*}
    p \coloneq 1 - c_1 K^2 \exp({-B \phi(\snr)}) - 2\exp(- f(n)\wedge g(n)) 
\end{eqnarray*}
A valid choice of $f(n)$ and $g(n)$ can be $n^\delta$ for some $\delta < 1$, which gives us the concentration bound:
\begin{eqnarray*}
    \sqrt{5\lambda_{\max}(\bSig)\cdot\kappa(\bSig)}\cdot \dfrac{n^{-(1 - \delta)/2}}{1 - n^{-(1 - \delta)/2}}
\end{eqnarray*}
with probability $p = 1 - c_1 K^2 e^{-2B \phi(\snr)} - 2\exp(- n^\delta)$, where $n = KB$.
\end{proof}

\newpage
\subsection{Supporting Propositions and Lemmas}
\begin{lemma} \label{lem:specA}
The matrix $\bA$ in \eqref{eq:E22} for $h_2 = \textrm{rank }(\hatPi_2 {\bPi}_2^\top)$ has the following properties:
\begin{enumerate}
    \item[(a)] $-\kappa({\bSig}) \leq \lambda(\bA) \leq 1 \implies \norm{\bA}_2 \leq \kappa({\bSig})$
    \item[(b)] The following inequality holds for $\tr(\bA)$:
    \begin{equation*}
    \begin{split}
        &-\sqrt{2}h_2\kappa(\bSig)\\
        \leq & -h_2[\sqrt{2}\kappa(\bSig)-1] - \left(1 - \frac{1}{\kappa(\bSig)}\right) \leq \tr(\bA) \leq \min\left\{(n-1)\left( 1 - \sqrt[n-1]{ \nicefrac{1}{\kappa \left( \bSig\right)}} \right), \kappa(\bSig) - 1\right\}
    \end{split}
    \end{equation*} 
    \item[(c)] $|\tr(\bA)| \leq \sqrt{2}h_2\kappa(\bSig)$ 
\end{enumerate} 
\end{lemma}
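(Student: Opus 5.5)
The plan is to work with the explicit form $\bA = \bP^\perp - \bM^\top \bP^\perp \bM$. Here $\bP^\perp = \bI_n - \bu\bu^\top$ is the projector orthogonal to the unit vector $\bu \propto \widetilde{\bX}_{\hatPi_1}$, and $\bM = \bSig^{-1/2}\bQ\bSig^{1/2}$ with $\bQ \coloneqq \hatPi_2\bPi_2^\top$ a permutation matrix satisfying $d_H(\bQ,\bI_n)=h_2$. Two facts will be used throughout. First, $\|\bM\|_2 \le \|\bSig^{-1/2}\|_2\|\bSig^{1/2}\|_2 = \sqrt{\kappa(\bSig)}$, so every eigenvalue of $\bM\bM^\top$ and of $\bM^\top\bM$ lies in $[1/\kappa(\bSig),\kappa(\bSig)]$. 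Second, $|\det \bM| = 1$.

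\textbf{Part (a).} Since $\bM^\top\bP^\perp\bM \succeq 0$, we get $\bA \preceq \bP^\perp \preceq \bI$, hence $\lambda(\bA)\le 1$. Since $\bP^\perp \succeq 0$ and $\bP^\perp \preceq \bI$, we get $\bA \succeq -\bM^\top\bP^\perp\bM \succeq -\bM^\top\bM \succeq -\kappa(\bSig)\bI$. Together these give $\|\bA\|_2\le\kappa(\bSig)$.

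\textbf{Part (b).} I will compute the trace using cyclicity:
\[
\tr(\bA) = (n-1) - \tr(\bM\bM^\top) + \bu^\top \bM\bM^\top \bu .
\]

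For the upper bound there are two routes.
\begin{itemize}
\item The eigenvalues of $\bM\bM^\top$ have product $1$, so by AM--GM $\tr(\bM\bM^\top)\ge n$. Combined with $\bu^\top\bM\bM^\top\bu\le\kappa(\bSig)$, this gives $\tr(\bA)\le \kappa(\bSig)-1$.
\item Alternatively, $\tr(\bP^\perp\bM\bM^\top)$ is at least the sum of the $n-1$ smallest eigenvalues of $\bM\bM^\top$. Their product is at least $1/\lambda_{\max}(\bM\bM^\top)\ge 1/\kappa(\bSig)$, so AM--GM gives $\tr(\bP^\perp\bM\bM^\top)\ge (n-1)\kappa(\bSig)^{-1/(n-1)}$. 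This yields the other term in the minimum.
\end{itemize}

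For the lower bound, write $\tr(\bM\bM^\top) - n = \tr\bigl(\bSig^{-1}(\bQ\bSig\bQ^\top-\bSig)\bigr)$ and split
\[
\bQ\bSig\bQ^\top-\bSig = (\bQ-\bI)\bSig\bQ^\top + \bSig(\bQ^\top-\bI).
\]
The second piece contributes $\tr(\bQ^\top)-n = -h_2$. The first piece contributes $\tr\bigl(\bSig^{-1}(\bQ-\bI)\bSig\bQ^\top\bigr)$. The matrix $\bQ-\bI$ is supported on the $h_2$ moved rows and columns and has $\|\bQ-\bI\|_F=\sqrt{2h_2}$, so I will bound this term by von Neumann's trace inequality (Lemma \ref{lem:von-neuman}) or by Cauchy--Schwarz restricted to the moved coordinates. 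The target is $\sqrt2\,h_2\kappa(\bSig)$, which gives $\tr(\bM\bM^\top)-n \le h_2(\sqrt2\kappa(\bSig)-1)$.

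Adding $\bu^\top\bM\bM^\top\bu \ge 1/\kappa(\bSig)$ then yields
\[
\tr(\bA)\ge -h_2\bigl[\sqrt2\kappa(\bSig)-1\bigr] - \bigl(1-1/\kappa(\bSig)\bigr).
\]
This is in turn $\ge -\sqrt2 h_2\kappa(\bSig)$, because $1-1/\kappa\le 1\le h_2$ when $h_2\ge1$.

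\textbf{Part (c).} This follows from (b). If $h_2=0$, then $\bM=\bI$, $\bA=\bzero$ and the bound is trivial. If $h_2\ge 1$, the upper bound $\kappa(\bSig)-1$ is at most $\sqrt2 h_2\kappa(\bSig)$, and the lower bound was already shown to be at least $-\sqrt2 h_2\kappa(\bSig)$.

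The main obstacle is the sharp constant $\sqrt2$ in bounding $\tr\bigl(\bSig^{-1}(\bQ-\bI)\bSig\bQ^\top\bigr)$. A crude nuclear-norm bound, using $\mathrm{rank}(\bQ-\bI)\le h_2$ and $\|\bQ-\bI\|_2\le 2$, only gives $2h_2\kappa(\bSig)$. My first attempt will be a Frobenius pairing of $\bQ-\bI$ against $\bSig\bQ^\top\bSig^{-1}$ restricted to the $h_2$ moved columns. If that fails, I will accept a larger universal constant, which only changes the constants downstream in Proposition \ref{prop:E22-dist-cond-X}.
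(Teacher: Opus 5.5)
Your proposal is correct and follows the paper's proof closely. Part (a) uses the same Loewner/Weyl comparison. Part (b) goes through the identity $\tr(\bA)=\tr(\bB)-\bu^\top\bB\bu$ with $\bB=\bI-\bM\bM^\top$, and splits $\tr(\bB)$ into $h_2$ plus a cross term exactly as the paper does. Part (c) is the same combination.

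The only real difference is in the two upper bounds. You use $\det(\bM\bM^\top)=1$, AM--GM, and the compression bound $\tr(\bP^\perp\bM\bM^\top)\ge\sum_{i\ge 2}\lambda_i(\bM\bM^\top)$. The paper instead goes through the lower von Neumann inequality, Cauchy interlacing for the rank-one downdate, and the telescoping product $\prod_i\lambda_{i+1}(\bSig)/\lambda_i(\bSig)$. Your route is shorter.

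The step you flag as the main obstacle is not one, so no fallback constant is needed. Write $\bQ=\hatPi_2\bPi_2^\top$ as you do. The paper gets $\sqrt2$ from von Neumann's inequality combined with $\|\bQ-\bI\|_*\le\sqrt{\operatorname{rank}(\bQ-\bI)}\,\|\bQ-\bI\|_F\le\sqrt{h_2}\cdot\sqrt{2h_2}$, i.e.\ square-root rank times Frobenius norm rather than rank times operator norm. Your restricted Frobenius pairing also works. Take $\mathbf{N}=\bSig\bQ^\top\bSig^{-1}$ and let $S$ be the set of $h_2$ moved indices; then $|\tr((\bQ-\bI)\mathbf{N})|\le\|\bQ-\bI\|_F\,\|\mathbf{N}_{SS}\|_F\le\sqrt{2h_2}\cdot\sqrt{h_2}\,\|\mathbf{N}\|_2\le\sqrt2\,h_2\,\kappa(\bSig)$.
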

\begin{proof}
Let us look at the parts separately:

{Part (a):}
Observe that, $\textrm{rank}(\bI_n - \bM) = \textrm{rank}({\bSig}^{-\frac12} (\bI_n- \hatPi_2 {\bPi}_2^\top ){\bSig}^{\frac12})=h_2$. Simplifying notations, let us write write $\bA= \bP^{\perp} - \bM^\top \bP^{\perp} \bM = (\bP^{\perp} - \bM^\top \bP^{\perp}) + (\bM^\top \bP^{\perp} - \bM^\top \bP^{\perp} \bM) = ((\bI_n- \bM^\top)\bP^{\perp} + \bM^\top \bP^{\perp}(\bI_n-\bM))$. Hence, $\textrm{rank}(\bA) \leq \textrm{rank}(\bI_n-\bM)^\top + \textrm{rank}(\bI_n-\bM) \leq 2h_2$.
$\bM^\top \bP\bM$ is a PSD of rank $(n-1)$, hence, $\lambda_{\min}(\bM^\top \bP\bM)=0$, moreover, $\lambda_{\max}(\bM^\top \bP\bM) = \norm{\bM^\top\bP\bM}_2 \leq \kappa({\bSig})$.

Observe that, by Weyl's inequality (\ref{lem:weyl-ineq}), 
\begin{equation*}
\begin{split}
\lambda_{\min}(\bP^{\perp}) - \lambda_{\max}(\bM^\top \bP\bM) & \leq \lambda(\bA) \leq \lambda_{\max}(\bP^{\perp}) - \lambda_{\min}(\bM^\top \bP\bM) \\
\implies - \lambda_{\max}(\bM^\top \bP\bM) & \leq \lambda(\bA) \leq 1 - \lambda_{\min}(\bM^\top \bP\bM) \\
\implies - \kappa({\bSig}) & \leq \lambda(\bA) \leq 1 
\end{split}
\end{equation*}
This completes the proof of part (a).

{Part (b):} We will start off by showing $\tr(\bA) \leq (n - 1) \left( 1 - \sqrt[n-1]{ \nicefrac{1}{\kappa \left( {\bSig} \right)}} \right)$. Trace of the matrix $\bA$ is given by:
    \begin{eqnarray*}
        \tr(\bA) & = & \tr \left(\bP^\perp \right) - \tr \left( \left({\bSig}^{-1/2} \hatPi_2 {\bPi}_2^\top {\bSig}^{1/2} \right)^\top \bP^\perp \left({\bSig}^{-1/2} \hatPi_2 {\bPi}_2^\top {\bSig}^{1/2} \right) \right).
    \end{eqnarray*}
    Here we use the spectral decomposition of ${\bSig} = \bQ \bLam \bQ^\top$ and define $\widetilde{{\bPi}} \coloneq \bQ^\top\hatPi_2 {\bPi}_2^\top \bQ$ which gives us: 
    \begin{eqnarray*}
       {\bSig}^{-1/2} \hatPi_2 {\bPi}_2^\top {\bSig}^{1/2} & = & (\bQ \bLam^{-1/2} \bQ^\top) \hatPi_2 {\bPi}_2^\top (\bQ \bLam^{1/2} \bQ^\top)\\
       & = & \bQ \bLam^{-1/2} (\bQ^\top \hatPi_2 {\bPi}_2^\top \bQ) \bLam^{1/2} \bQ^\top\\
       & = & \bQ\bLam^{-1/2} \widetilde{{\bPi}}\bLam^{1/2} \bQ^\top.
    \end{eqnarray*}
    Then we obtain:
    \begin{eqnarray*}
        \tr(\bA) & = & \tr \left(\bP^\perp \right) - \tr \left( \left( \bQ \bLam^{-1/2} \widetilde{{\bPi}} \bLam^{1/2} \bQ^\top \right)^\top \bP^\perp \left( \bQ \bLam^{-1/2} \widetilde{{\bPi}} \bLam^{1/2} \bQ^\top \right)\right).
    \end{eqnarray*}
    Here we also define $\bP^\ast \coloneqq \bQ^\top \bP^\perp \bQ$. Note that $\bP^\ast$ is also a projection matrix with rank $n-1$ as $\bP^\perp$ is a projection with rank $n-1$. Therefore, we get
    \begin{eqnarray}
        \tr(\bA) & = & \tr \left(\bP^\perp \right) - \tr \left( \left( \bLam^{-1/2} \widetilde{{\bPi}} \bLam^{1/2} \right)^\top \bP^\ast \left( \bLam^{-1/2} \widetilde{{\bPi}} \bLam^{1/2} \right)\right) \notag \\
        & = & \tr \left(\bP^\perp \right) - \tr \left( \left( \widetilde{{\bPi}} \bLam \widetilde{{\bPi}}^\top \right) \left( \bLam^{-1/2} \bP^\ast \bLam^{-1/2} \right) \right) \notag \\
        & = & (n - 1) - \tr \left( \left( \widetilde{{\bPi}} \bLam \widetilde{{\bPi}}^\top \right) \left( \bLam^{-1/2} \bP^\ast \bLam^{-1/2} \right) \right). \label{eq:upper_trA_eq1}
    \end{eqnarray}
    Here we will use Lemma \ref{lem:von-neuman} to get 
    \begin{eqnarray}
        \tr \left( \left( \widetilde{{\bPi}} \bLam \widetilde{{\bPi}}^\top \right) \left( \bLam^{-1/2} \bP^\ast \bLam^{-1/2} \right) \right) & \overset{\eqref{eq:trAB-lb}}{\geq} & \sum_{i = 1}^n \lambda_i \left( \widetilde{{\bPi}} \bLam \widetilde{{\bPi}}^\top \right) \lambda_{n - i + 1} \left( \bLam^{-1/2} \bP^\ast \bLam^{1/2} \right) \notag \\
        & = & \sum_{i = 1}^n \lambda_i \left( \bLam \right) \lambda_{n - i + 1} \left( \bLam^{-1/2} P \bLam^{1/2} \right). \label{eq:upper_trA_eq2}
    \end{eqnarray}
    The last line follows from the fact that $\bLam$ and $\widetilde{{\bPi}} \bLam \widetilde{{\bPi}}^\top$ have same eigenvalues as $\widetilde{{\bPi}}$ is orthogonal. As mentioned above, $\bP^\ast$ is a projection matrix with rank $n-1$. Thus we can write $\bP^\ast = \bI_n - \bu\bu^T$ for some $\| \bu \| = 1$. Thus, using the Lemma \ref{lem:interlacing-lemma}, for $i = 1, 2, \cdots, n-1$, we obtain
    \begin{eqnarray}
        \lambda_{n-i} \left( \bLam^{-1/2} \bP^\ast \bLam^{-1/2} \right) & = & \lambda_{n - i} \left( \bLam^{-1/2} \left( I - \bu\bu^\top \right) \bLam^{-1/2} \right) \notag \\
        & = & \lambda_{n- i} \left( \bLam^{-1} - \bLam^{-1/2} uu^\top \bLam^{-1/2} \right)  \notag \\
        & \overset{\eqref{eq:interlacing}}{\geq} & \lambda_{n - i + 1} \left( \bLam^{-1} - \bLam^{-1/2} \bu\bu^\top \bLam^{-1/2} + \bLam^{-1/2} \bu\bu^\top \bLam^{-1/2} \right) \notag \\
        & = & \lambda_{n - i + 1} \left( \bLam^{-1}  \right). \label{eq:upper_trA_eq3}
    \end{eqnarray}
    Moreover, as $\bP^\ast$ is a non-singular matrix, we have
    \begin{equation}\label{eq:upper_trA_eq4}
        \lambda_{n} \left( \bLam^{-1/2} \bP^\ast \bLam^{-1/2} \right) = 0.
    \end{equation}
    Therefore, from \eqref{eq:upper_trA_eq2} we get
    \begin{eqnarray}
        \tr \left( \left( \widetilde{{\bPi}} \bLam \widetilde{{\bPi}}^\top \right) \left( \bLam^{-1/2} P \bLam^{-1/2} \right) \right) & \geq & \sum_{i = 1}^n \lambda_i \left( \bLam \right) \lambda_{n - i + 1} \left( \bLam^{-1/2} P \bLam^{-1/2} \right) \notag \\
        & \overset{\eqref{eq:upper_trA_eq4}}{=} & \sum_{i = 2}^n \lambda_i \left( \bLam \right) \lambda_{n - i + 1} \left( \bLam^{-1/2} P \bLam^{-1/2} \right) \notag \\
        & = & \sum_{i = 1}^n \lambda_{i+1} \left( \bLam \right) \lambda_{n - i} \left( \bLam^{-1/2} P \bLam^{-1/2} \right) \notag \\
        & \overset{\eqref{eq:upper_trA_eq3}}{=} & \sum_{i = 1}^{n - 1} \lambda_{i+1} \left( \bLam \right) \lambda_{n - i} \left( \bLam^{-1} \right) \notag \\
        & = & \sum_{i = 1}^{n - 1} \frac{\lambda_{i+1}\left( \bLam \right)}{\lambda_i \left( \bLam \right)} \notag \\
        & \overset{\eqref{eq:AMGM}}{\geq} & (n - 1) \left( \prod_{i = 1}^{n - 1} \frac{\lambda_{i+1}\left( \bLam \right)}{\lambda_i \left( \bLam \right)} \right)^{\frac{1}{n-1}} \notag \\
         & = & (n - 1) \left( \frac{\lambda_{n}\left( \bLam \right)}{\lambda_1 \left( \bLam \right)} \right)^{\frac{1}{n-1}}. \label{eq:upper_trA_eq5}
    \end{eqnarray}
    Then combining \eqref{eq:upper_trA_eq1} and \eqref{eq:upper_trA_eq5}, we get
    \begin{eqnarray*}
        \tr(\bA) & \leq & (n - 1) \left( 1 - \left( \frac{\lambda_{n}\left( \bLam \right)}{\lambda_1 \left( \bLam \right)} \right)^{\frac{1}{n-1}} \right) \\
        & = & (n - 1) \left( 1 - \left( \frac{1}{\kappa \left( {\bSig} \right)} \right)^{\frac{1}{n-1}} \right).
    \end{eqnarray*}
    This completes the proof of this result.

Now, as defined, $\widetilde{\bX}_{\bPi_1} = {\bSig}^{-\frac{1}{2}}\bPi_1X$ with $\bP_{\bPi_1, X} = Proj\left(\widetilde{\bX}_{\bPi_1}\right) = \dfrac{\widetilde{\bX}_{\bPi_1}\widetilde{\bX}_{\bPi_1}^\top}{\widetilde{\bX}_{\bPi_1}^\top \widetilde{\bX}_{\bPi_1}}$. Here want to bound the trace of the matrix $\bA = \left(\bP^\perp - \bM^\top \bP^\perp \bM \right)$ where $\bM = {\bSig}^{-1/2} \hatPi_2 {\bPi}_2^\top {\bSig}^{1/2}$. Let us denote $\bX^\ast = \bPi_1\bX\sim N(\bzero,\bI_n)$, which means $\widetilde{\bX}_{\bPi_1} = {\bSig}^{-1/2}\bX^\ast \sim N(0, \Siginv)$. Thus, we have:
\begin{equation*}
    \begin{split}
        tr(\bA) = & \tr\left(\bP^\perp - \bM^\top \bP^\perp \bM \right)\\
        = & (n-1) - \tr(\bM^\top\bM) + \tr(\bM^\top\bP\bM)\\
        = & n-1 - \tr(\bM^\top\bM) +\dfrac{\tr\left(\bM^\top\widetilde{\bX}_{\bPi_1}\widetilde{\bX}_{\bPi_1}^\top \bM\right)}{\widetilde{\bX}_{\bPi_1}^\top \widetilde{\bX}_{\bPi_1}}\\
        = & n-1 - \tr(\bM^\top\bM)+ \dfrac{\widetilde{\bX}_{\bPi_1}^\top \bM \bM^\top\widetilde{\bX}_{\bPi_1}}{\widetilde{\bX}_{\bPi_1}^\top \widetilde{\bX}_{\bPi_1}}\\
        = & (n-1) - \tr(\bM\bM^\top)+ \dfrac{{\bX^\ast}^\top{\bSig}^{-1/2}\bM\bM^\top{\bSig}^{-1/2}\bX^\ast}{{\bX^\ast}^\top \Siginv\bX^\ast}\\
        = & \tr(\bI_n-\bM\bM^\top)- \dfrac{{\bX^\ast}^\top{\bSig}^{-1/2}(\bI_n -\bM\bM^\top){\bSig}^{-1/2}\bX^\ast}{{\bX^\ast}^\top \Siginv\bX^\ast}\\ 
        = & \tr(\bI_n - \bM\bM^\top) - \dfrac{\widetilde{\bX}_{\bPi_1}^\top(\bI_n - \bM\bM^\top)\widetilde{\bX}_{\bPi_1}}{\widetilde{\bX}_{\bPi_1}^\top \widetilde{\bX}_{\bPi_1}}\\
    \end{split}
\end{equation*}
Let use denote the matrix $\bB = \bI_n - \bM\bM^\top$, then w.p. 1, we have the following bounds:
\begin{itemize}
    \item $|\tr(\bA)| \geq \left|tr(\bB) - \dfrac{\widetilde{\bX}_{\bPi_1}^\top \bB\widetilde{\bX}_{\bPi_1}}{\widetilde{\bX}_{\bPi_1}^\top \widetilde{\bX}_{\bPi_1}}\right| \geq |\tr(\bB)| - \left|\dfrac{\widetilde{\bX}_{\bPi_1}^\top \bB\widetilde{\bX}_{\bPi_1}}{\widetilde{\bX}_{\bPi_1}^\top \widetilde{\bX}_{\bPi_1}}\right|\geq |\tr(\bB)| - |\lambda_{\max}(\bB)|$ 
    \item $\tr(\bB) - \lambda_{\max}(\bB)\leq \tr(\bA)\leq \tr(\bB) - \lambda_{\min}(\bB)$
\end{itemize}

Let us simplify $\bB$ in terms of $\bK = \bI_n - \hatPi_2{\bPi}_2^\top = \bI_n - \text{block-diag}(\bI_{n-h_2},{\bPi}_{h_2}) = \text{block-diag}(\bzero_{n-h_2},\bI_{h_2}-{\bPi}_{h_2})$ as:
$$
\begin{aligned}
    \bB = & \bI_n - \bM\bM^\top\\
    = & \bI_n - {\bSig}^{-1/2} \hatPi_2{\bPi}_2^\top  {\bSig}^{1/2}{\bSig}^{1/2} {\bPi}_2\hatPi^\top_2{\bSig}^{-1/2}\\
    = & {\bSig}^{-1/2}\left({\bSig} - \hatPi_2 {\bPi}_2^\top {\bSig} {\bPi}_2\hatPi^\top_2\right){\bSig}^{-1/2}\\
    = & {\bSig}^{-1/2}\left[{\bSig} - \begin{bmatrix}
        \bI_{n-h_2} & \bzero\\
        \bzero^\top & {\bPi}_{h_2}
    \end{bmatrix}\begin{bmatrix}
        {\bSig}_{n-h_2} & {\bSig}_{n-h_2,h_2}\\
        {\bSig}_{n-h_2,h_2}^{\top} & {\bSig}_{h_2}
    \end{bmatrix} \begin{bmatrix}
        \bI_{n-h_2} & \bzero^\top\\
        \bzero & {\bPi}^\top_{h_2}
    \end{bmatrix}\right]{\bSig}^{-1/2} \\
    = & {\bSig}^{-1/2}\underbrace{\begin{bmatrix}
        \bzero_{n-h_2} & {\bSig}_{n-h_2,h_2}^{*}(\bI_{h_2} - {\bPi}_{h_2})^{\top}\\
        (\bI_{h_2} - {\bPi}_{h_2}){\bSig}_{n-h_2,h_2}^{*^\top} & {\bSig}_{h_2} - {\bPi}_{h_2}{\bSig}_{h_2}{\bPi}_{h_2}^\top
    \end{bmatrix}}_{{\bSig}_{n,h_2}}{\bSig}^{-1/2}
\end{aligned}
$$
Let $\bB = \bI_n - \bM\bM^\top$ with $\bM = {\bSig}^{-1/2}\hatPi_2{\bPi}_2^\top{\bSig}^{1/2}.$

\textbf{Step 1: Analytical expression for $\bM\bM^\top$}: Set ${\bPi}_2^{\ast} = \hatPi_2{\bPi}_2^\top$ and diagonalize ${\bSig}$ as ${\bSig} = \bQ\bLam \bQ^\top$ with $\bQ$ orthogonal and $\bLam = \operatorname{diag}(\lambda_1,\dots,\lambda_n)$, $0<\lambda_n\le \dots \le \lambda_1$. Then
\begin{equation}
\bM\bM^\top = {\bSig}^{-1/2}{\bPi}_2^{\ast}{\bSig} {\bPi}_2^{\ast\top}{\bSig}^{-1/2}
= \bQ\bLam^{-1/2}\bQ^\top {\bPi}_2^{\ast} \bQ\bLam \bQ^\top {\bPi}_2^{\ast\top} \bQ \bLam^{-1/2}\bQ^\top.
\end{equation}
Define the orthogonal matrix $\widetilde{{\bPi}}=\bQ^\top{\bPi}_2^{\ast} \bQ$. Using $\bQ^\top \bQ=\bI_n$ we obtain $\bM\bM^\top = \bQ\bLam^{-1/2}\widetilde{{\bPi}}\bLam
\widetilde{{\bPi}}^\top \bLam^{-1/2}\bQ^\top$. Hence, by the cyclic property of trace, $\tr(\bM\bM^\top)=\tr\bigl(\widetilde{{\bPi}}\bLam \widetilde{{\bPi}}^\top \bLam^{-1}\bigr).$

\textbf{Step 2: Upper bound for $\tr (\bB)$}: Observe
\begin{equation*}
\tr (\bB) = n - \tr(\bM\bM^\top)
= n - \tr\bigl(\widetilde{{\bPi}}\bLam \widetilde{{\bPi}}^\top \bLam^{-1}\bigr).
\end{equation*}
Let $\bA_1=\widetilde{{\bPi}}\bLam \widetilde{{\bPi}}^\top$ and $\bA_2=\bLam^{-1}$; both are positive definite. By Von Neumann’s trace inequality (\ref{eq:trAB-lb}), $\tr(\bA_1\bA_2)\geq\sum_{i=1}^n {\bSig}_i(\bA_1){\bSig}_{n-i+1}(bA_2)$. Since $\bA_1$ has ordered (decreasing) eigenvalues $(\lambda_1,\ldots,\lambda_n)$ and $\bA_2$ has ordered eigenvalues $(1/\lambda_n,\ldots,1/\lambda_1)$, the right‑hand side equals $n$. Therefore $\tr(\bM\bM^\top)\geq n \Longrightarrow\quad \tr \bB\leq 0$. 

\textbf{Step 3: Lower bound for $\tr(\bB)$:} Write:
\begin{equation*}
\tr (\bB)
= \tr\bigl(\bI_n - \widetilde{{\bPi}}\bLam \widetilde{{\bPi}}^\top \bLam^{-1}\bigr)
= \tr\bigl(\bI_n -\bLam^{-1}\widetilde{{\bPi}}\bLam\bigr)
+ \tr\bigl(\bLam^{-1}\widetilde{{\bPi}}\bLam - \widetilde{{\bPi}}\bLam \widetilde{{\bPi}}^\top \bLam^{-1}\bigr).
\end{equation*}
The first term equals $\tr(\bI_n-\widetilde{{\bPi}})=h_2$, the number of points permuted by ${\bPi}_2^{\ast}$. For the second term, set $\widetilde{{\bPi}}= (\bI_n + \bDel)$. Using Von Neumann's trace inequality (\ref{lem:von-neuman}) again,
{
\small
\begin{equation*}
\bigl|\tr\bigl(-{\bDel}^\top\bLam^{-1}\widetilde{{\bPi}}\bLam\bigr)\bigr|
\leq {\sigma}_{\max}\bigl(\bLam^{-1}\widetilde{{\bPi}}\bLam\bigr)\sum_{i=1}^{h_2}{\sigma}_i(\bDel)
\leq {\sigma}_{\max}\bigl(\bLam^{-1}\widetilde{{\bPi}}\bLam\bigr) \sqrt{h_2}\left\|\bDel\right\|_F
=h_2\sqrt{2} \kappa({\bSig}),
\end{equation*}}
where $\kappa({\bSig})=\lambda_1/\lambda_n$ is the condition number of ${\bSig}$ and $\norm{\bDel}_F=\sqrt{2h_2}$ for a permutation of $h_2$ indices.
Combining, we have
\begin{equation}\label{eq:B-trace-bound}
-h[\sqrt{2}\kappa({\bSig})-1]=h-\sqrt{2}h\kappa({\bSig})
\leq \tr (\bB)
\leq 0
\end{equation}
\textbf{Step 4: Eigenvalue structure of $\bB$:} We know $\bM\bM^\top = \bQ\bLam^{-1/2}\widetilde{{\bPi}}\bLam \widetilde{{\bPi}}^\top \bLam^{-1/2} \bQ^\top$. Hence the eigenvalues of $\bM\bM^\top$ match those of $\bLam^{-1/2}\widetilde{{\bPi}}\bLam \widetilde{{\bPi}}^\top \bLam^{-1/2} \sim \widetilde{{\bPi}}\bLam \widetilde{{\bPi}}^\top \bLam^{-1}$.

Then, 
\begin{equation}\label{eq:B-eigen-bound}
\begin{split}
\lambda_{\min}(\widetilde{{\bPi}}\bLam \widetilde{{\bPi}}^\top)\lambda_{\min}(\bLam^{-1}) & \leq \lambda(\widetilde{{\bPi}}\bLam \widetilde{{\bPi}}^\top \bLam^{-1})  \leq \lambda_{\max}(\widetilde{{\bPi}}\bLam \widetilde{{\bPi}}^\top)\lambda_{\max}(\bLam^{-1}) \\
\dfrac{1}{\kappa({\bSig})} & \leq \lambda(\widetilde{{\bPi}}\bLam \widetilde{{\bPi}}^\top \bLam^{-1}) \leq \kappa({\bSig})\\
1- \kappa({\bSig}) &\leq \lambda(\bB) \leq 1 - \dfrac{1}{\kappa({\bSig})} 
\end{split}
\end{equation}

Combining the two bounds (\ref{eq:B-trace-bound}) and (\ref{eq:B-eigen-bound}), we have:
$$
\begin{aligned}
    \tr(\bB) - \lambda_{\max}(\bB)\leq & \tr(\bA)\leq \tr(\bB) - \lambda_{\min}(\bB)\\
    \implies -h[\sqrt{2}\kappa({\bSig})-1] - \left(1 - \frac{1}{\kappa({\bSig})}\right) \leq & \tr(\bA) \leq -(1 - \kappa({\bSig})) = \kappa({\bSig}
    ) - 1\\
   \implies  -h_2[\sqrt{2}\kappa({\bSig})-1] - h_2\left(1 - \frac{1}{\sqrt{2}\kappa({\bSig})}\right) \leq & \tr(\bA) \leq\kappa({\bSig}
    ) - 1 \\
    \implies h_2(1 - \sqrt{2}\kappa({\bSig})) + h_2\left(  \frac{1 - \sqrt{2}\kappa({\bSig})}{\sqrt{2}\kappa({\bSig})}\right) \leq & \tr(\bA) \leq -\kappa({\bSig}
    ) - 1\\
    \implies -\sqrt{2}h_2\kappa(\bSig)\leq  -h_2\frac{2\kappa^2({\bSig}) - 1}{\sqrt{2}\kappa({\bSig})}\leq & \tr(\bA) \leq \kappa({\bSig}
    ) - 1\\
\end{aligned}
$$
Combining the above inequality completes the proof.

{Part (c):}
Let us define some short forms $\kappa \coloneqq \kappa(\bSig)$, $A := \sqrt{2}h_2 \kappa$, with $B \coloneqq \min \left\{ (n-1)\left(1-\kappa^{-1/(n-1)}\right), \ \kappa-1 \right\}.$ We will show that $ A>B$.

First, we prove that $(n-1)\left(1-\kappa^{-1/(n-1)}\right) \leq \kappa-1$.  Set $x \coloneq \kappa^{1/(n-1)} \ge 1$. Then
\begin{equation}
(n-1)\left(1-\kappa^{-1/(n-1)}\right)
=
(n-1)\left(1-\frac{1}{x}\right)
=
\frac{(n-1)(x-1)}{x}
\le
(n-1)(x-1).
\end{equation}
On the other hand, by Bernoulli's inequality (\ref{lem:bernoulli-ineq}), $x^{n-1}-1 \ge (n-1)(x-1).$ Since $x^{n-1}=\kappa$, it follows that $
(n-1)(x-1) \le \kappa-1.$
Therefore, $(n-1)\left(1-\kappa^{-1/(n-1)}\right) \leq \kappa-1$. Hence the minimum is attained by the first term, and so $B = (n-1)\left(1-\kappa^{-1/(n-1)}\right)$. Now, since $\kappa \ge 1$ and $h_2 \ge 1$, we have $A = \sqrt{2}\, h_2 \kappa \ge \sqrt{2}\,\kappa > \kappa-1$. Also, from the previous step, $B \leq \kappa-1$. Combining these two inequalities yields $B \leq \kappa-1 < \sqrt{2}\, h_2 \kappa = A$. Thus,
\begin{equation*}
\sqrt{2}\, h_2 \kappa(\bSig)
>
\min \left\{ (n-1)\left(1-\kappa(\bSig)^{-1/(n-1)}\right), \ \kappa(\bSig)-1 \right\}.
\end{equation*}
This completes the proof.
\end{proof}

\newpage
\subsection{Technical Lemmas proved}

\begin{lemma}\label{lem:matrix-trace-bound}
Let $B\in\mathbb R^{n\times n}$ be symmetric positive–definite, and
let $X\in\mathbb R^{m\times n}$ be arbitrary.  Then
$$
  \lambda_{\min}(B)\lVert X\rVert_F^{2} \leq \tr(XBX^{\top}) \leq
  \lambda_{\max}(B)\,\lVert X\rVert_F^{2}
$$
\end{lemma}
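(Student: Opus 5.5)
The plan is to reduce the statement to a Rayleigh-quotient type identity and then use the spectral theorem for $B$. Write $x_1^\top,\dots,x_m^\top$ for the rows of $X$, so that $x_i\in\R^n$. Expanding the trace of the $m\times m$ matrix $XBX^\top$ gives
\begin{equation*}
\tr(XBX^\top)=\sum_{i=1}^m (XBX^\top)_{ii}=\sum_{i=1}^m x_i^\top B x_i .
\end{equation*}
The Frobenius norm splits along the same rows:
\begin{equation*}
\lVert X\rVert_F^2=\sum_{i=1}^m \lVert x_i\rVert_2^2 .
\end{equation*}
It therefore suffices to prove the single-vector inequality $\lambda_{\min}(B)\lVert v\rVert_2^2\le v^\top B v\le \lambda_{\max}(B)\lVert v\rVert_2^2$ for every $v\in\R^n$, and then sum it over $i=1,\dots,m$.

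For the single-vector inequality I will diagonalize $B$. Since $B$ is symmetric, it has a spectral decomposition $B=\bQ\bLam\bQ^\top$ with $\bQ$ orthogonal and $\bLam=\mathrm{diag}(\lambda_1,\dots,\lambda_n)$. Setting $w=\bQ^\top v$, we get
\begin{equation*}
v^\top B v=\sum_{j=1}^n \lambda_j w_j^2 .
\end{equation*}
Each $\lambda_j$ lies in $[\lambda_{\min}(B),\lambda_{\max}(B)]$ and each weight $w_j^2$ is nonnegative. Hence this sum lies between $\lambda_{\min}(B)\lVert w\rVert_2^2$ and $\lambda_{\max}(B)\lVert w\rVert_2^2$. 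Orthogonality of $\bQ$ gives $\lVert w\rVert_2=\lVert v\rVert_2$, which finishes the single-vector inequality. Positive definiteness of $B$ is not needed here; symmetry is enough.

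An equivalent route is to write $\tr(XBX^\top)=\tr(BX^\top X)$ and apply the von Neumann trace bound (Lemma \ref{lem:von-neuman}) to the pair $B$, $X^\top X$. Both are positive semidefinite, and $\tr(X^\top X)=\lVert X\rVert_F^2$. I prefer the row-wise argument above because it is elementary and self-contained.

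I do not expect any step to be a real obstacle. The one point to state carefully is the index bookkeeping, namely that the diagonal entries of $XBX^\top$ are exactly the quadratic forms of the rows of $X$.
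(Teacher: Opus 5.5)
Your argument is correct and is essentially the paper's proof. Both diagonalize $B=\bQ\bLam\bQ^\top$ and sandwich the nonnegatively weighted double sum $\sum_{i,k}\lambda_k\,(x_i^\top q_k)^2$ (with $q_k$ the columns of $\bQ$); the paper groups it by columns of $Y=X\bQ$, while you group it by rows of $X$, one Rayleigh quotient per row, and your remark that only symmetry of $B$ is needed applies equally to the paper's version.
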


\begin{proof}
Begin by noting that we can Write the Frobenius norm as a trace $\lVert X\rVert_F^{2}\;=\;\tr(XX^{\top})$. Because $B$ is symmetric positive–definite, it admits an orthogonal eigen decomposition $B = \bQ\bLam \bQ^{\top}$, where $\bLam=\operatorname{diag}(\lambda_1,\dots,\lambda_n),\;
0<\lambda_{\min}=\lambda_1\le\cdots\le\lambda_n=\lambda_{\max}.$ Insert this into the trace and use cyclicity $\tr(XBX^{\top})
  =\tr\bigl(X\bQ\bLam \bQ^{\top}X^{\top}\bigr)
  =\tr\bigl(\bLam\,Y^{\top}Y\bigr),$ for $Y\coloneqq X\bQ.$ Because $\bQ$ is orthogonal $(\bQ^{\top}\bQ=\bI)$ we have $YY^{\top}=X\bQ \bQ^{\top}X^{\top}=XX^{\top}$. Thus we have $\lVert Y\rVert_F^{2}=\tr(YY^{\top})=\tr(XX^{\top})=\lVert X\rVert_F^{2}$. Expanding the remaining trace column-wise, $\tr(XBX^{\top})=\sum_{k=1}^{n}\lambda_k\,\lVert Y_{*k}\rVert_2^{2}$,
where $Y_{*k}$ is the $k$-th column of $Y$.  Since every
$\lVert Y_{*k}\rVert_2^{2}\ge 0$,
\[
\lambda_{\min}\sum_{k}\lVert Y_{*k}\rVert_2^{2}
  \;\le\;
  \sum_{k}\lambda_k\,\lVert Y_{*k}\rVert_2^{2}
  \;\le\;
  \lambda_{\max}\sum_{k}\lVert Y_{*k}\rVert_2^{2}.
\]
Using $\sum_{k}\lVert Y_{*k}\rVert_2^{2}=\lVert Y\rVert_F^{2}=\lVert X\rVert_F^{2}$
gives the desired inequality.
\end{proof}

\begin{lemma}\label{lem:raleigh-bound}
Let $A \in \R^{n \times n}$  be any matrix (not necessarily symmetric), and let $B \in \R^{n \times n}$ be symmetric positive definite. We consider the generalized Rayleigh quotient $ R(\bx) \coloneq \frac{\bx^\top A \bx}{\bx^\top B \bx}$ for $\bx \neq 0$. Then we have the following inequality:
\begin{eqnarray*}
    \left| R(\bx) \right| \leq \norm{ B^{-1/2} A B^{-1/2}}_2 \leq \dfrac{\norm{A}_2}{\lambda_{\min}(B)}
\end{eqnarray*}
\end{lemma}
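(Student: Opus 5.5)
The plan is to reduce to the symmetric case by a change of variables and then bound the Rayleigh quotient of the reduced matrix by its spectral norm. Since $B$ is symmetric positive definite, it has a unique symmetric positive definite square root $B^{1/2}$, and $B^{-1/2}=(B^{1/2})^{-1}$ exists. For $\bx\neq 0$ I will set $\by \coloneq B^{1/2}\bx$, which is nonzero, so that $\bx = B^{-1/2}\by$. With this substitution the denominator becomes $\bx^\top B\bx = \by^\top\by = \|\by\|_2^2$, and the numerator becomes $\bx^\top A\bx = \by^\top (B^{-1/2}AB^{-1/2})\by$, using the symmetry of $B^{-1/2}$. Hence
$$
R(\bx) = \frac{\by^\top C\by}{\|\by\|_2^2},\qquad C\coloneq B^{-1/2}AB^{-1/2}.
$$

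For the first inequality, no symmetry of $A$ (and hence of $C$) is needed. By Cauchy--Schwarz and the definition of the operator norm,
$$
|\by^\top C\by| \leq \|\by\|_2\,\|C\by\|_2 \leq \|C\|_2\|\by\|_2^2,
$$
which gives $|R(\bx)|\leq \|B^{-1/2}AB^{-1/2}\|_2$ for every $\bx\neq 0$.

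For the second inequality, I will use submultiplicativity of the spectral norm:
$$
\|C\|_2\leq \|B^{-1/2}\|_2\,\|A\|_2\,\|B^{-1/2}\|_2 .
$$
Since the eigenvalues of $B^{-1/2}$ are $\lambda_i(B)^{-1/2}$, we have $\|B^{-1/2}\|_2=\lambda_{\min}(B)^{-1/2}$. Squaring this factor yields $\|A\|_2/\lambda_{\min}(B)$.

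The argument is elementary, and no step is a genuine obstacle. The only point needing care is that $A$ may be non-symmetric. This is why the first bound goes through Cauchy--Schwarz rather than through eigenvalues of $C$: the eigenvalues of a non-symmetric $C$ need not control the quadratic form.
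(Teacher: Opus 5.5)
Your proposal is correct and follows essentially the same route as the paper: the whitening substitution $\by = B^{1/2}\bx$, followed by Cauchy--Schwarz and the operator-norm bound. The only difference is that you explicitly justify the final step $\|B^{-1/2}AB^{-1/2}\|_2 \leq \|A\|_2/\lambda_{\min}(B)$ via submultiplicativity and $\|B^{-1/2}\|_2 = \lambda_{\min}(B)^{-1/2}$, which the paper asserts without comment.
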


\begin{proof}
Since $B \succ 0$, it has a unique symmetric positive definite square root $B^{1/2}$, with inverse $B^{-1/2}$. Define the change of variable $\by = B^{1/2} \bx \iff \quad \bx = B^{-1/2} \by$
Substituting into the Rayleigh quotient, we get:
\begin{align*}
\frac{\bx^\top A \bx}{\bx^\top B \bx} 
= \frac{(B^{-1/2} \by)^\top A (B^{-1/2} \by)}{(B^{-1/2} \by)^\top B (B^{-1/2} \by)} = \frac{\by^\top B^{-1/2} A B^{-1/2} \by}{\by^\top \by}
\end{align*}
Let $ \bM \coloneqq B^{-1/2} A B^{-1/2}$, then $\frac{\bx^\top A \bx}{\bx^\top B \bx} = \frac{\by^\top \bM \by}{\by^\top \by}$. This is the standard Rayleigh quotient of the matrix $\bM$ with respect to the nonzero vector $\by$. To bound it, we apply the Cauchy–Schwarz inequality:
$\left|\by^\top \bM \by\right| = \left|\langle \by \bM \by \rangle\right| \leq \|\by\| \cdot \|\bM \by\|$. Divide both sides by $\norm{\by}^2$ (since  $\by \neq 0$):
$$
\left| \frac{\by^\top \bM \by}{\by^\top \by} \right| \leq \frac{\|\bM \by\|}{\|\by\|} \leq \sup_{\mathbf{z} \neq 0} \frac{\|\bM \mathbf{z}\|}{\|\mathbf{z}\|} = \|\bM\|_2.
$$
Therefore,
$$
\left| \frac{\bx^\top A \bx}{\bx^\top B \bx} \right| 
= \left| \frac{\by^\top \bM \by}{\by^\top \by} \right| 
\leq \|M\|_2 = \|B^{-1/2} A B^{-1/2}\|_2 \leq \dfrac{\norm{A}_2}{\lambda_{\min}(B)}
$$
This completes the proof. 
\end{proof}

\begin{lemma}\label{lem:T-bound}
Let $\bSig \succ 0$ and define the weighted inner product $\langle a,b\rangle_{\Sigma^{-1}} \coloneq a^\top \bSig^{-1} b$  and norm $\|v\|_{\Sigma^{-1}}^2 \coloneqq v^\top \bSig^{-1} v$. Let $X_1\in\R^n$ with $X_1\neq 0$, let $P$ be a permutation matrix, and set $X_{12}:=PX_1$.
Define
$\|T_{\Pi_1,\Pi_2}\|^2
\coloneqq \|X_{12}\|_{\Sigma^{-1}}^2
   - \frac{\langle X_{12},X_1\rangle_{\Sigma^{-1}}^2}{\|X_1\|_{\Sigma^{-1}}^2}$. Then, we have the following property:
\begin{enumerate}
\item[(i)] {Projection (residual) form.} $\|T_{\Pi_1,\Pi_2}\|^2
= \big\|X_{12}-\operatorname{proj}^{(\Sigma^{-1})}_{X_{1}}\!\left(X_{12}\right)\big\|_{\Sigma^{-1}}^2$,
with
$\operatorname{proj}^{(\Sigma^{-1})}_{X_{1}}\!\left(X_{12}\right)
= \frac{\langle X_{12},X_{1}\rangle_{\Sigma^{-1}}}{\|X_{1}\|_{\Sigma^{-1}}^2}\,X_{1}.$
\item[(ii)] {Whitened (Euclidean) form.} With $u\coloneqq\bSig^{-1/2}X_{12}$ and $v\coloneqq\bSig^{-1/2}X_{1}$,
$$
\|T_{\Pi_1,\Pi_2}\|^2
= \min_{\alpha\in\R}\|\bSig^{-1/2}(PX_1-\alpha X_1)\|_2^2
= \|u\|_2^2 - \frac{(u^\top v)^2}{\|v\|_2^2}.
$$
\item[(iii)] {Spectral sandwich.}  
Let $\lambda_{\min}(\bSig^{-1})$ and $\lambda_{\max}(\bSig^{-1})$ be the extremal eigenvalues of $\bSig^{-1}$. Then
$$
\lambda_{\min}(\bSig^{-1})\,
\min_{\alpha\in\R}\|PX_1-\alpha X_1\|_2^2
\;\le\;
\|T_{\Pi_1,\Pi_2}\|^2
\;\le\;
\lambda_{\max}(\bSig^{-1})\,
\min_{\alpha\in\R}\|PX_1-\alpha X_1\|_2^2.
$$
Moreover, the inner minimization admits the explicit form
$$
\min_{\alpha\in\R}\|PX_1-\alpha X_1\|_2^2
= \|PX_1\|_2^2 - \frac{(X_1^\top P X_1)^2}{\|X_1\|_2^2}.
$$
\end{enumerate}
\end{lemma}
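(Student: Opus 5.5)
The plan is to work entirely in the whitened coordinates of part (ii) and then compare $\bSig^{-1}$-weighted quantities to Euclidean ones through Rayleigh-quotient bounds.

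For part (i), I will expand the squared $\bSig^{-1}$-norm of the residual $X_{12}-\alpha^\ast X_1$ with $\alpha^\ast=\langle X_{12},X_1\rangle_{\Sigma^{-1}}/\|X_1\|^2_{\Sigma^{-1}}$. This gives
\[
\|X_{12}\|^2_{\Sigma^{-1}}-2\alpha^\ast\langle X_{12},X_1\rangle_{\Sigma^{-1}}+(\alpha^\ast)^2\|X_1\|^2_{\Sigma^{-1}}.
\]
Substituting $\alpha^\ast$, the last two terms collapse to $-\langle X_{12},X_1\rangle^2_{\Sigma^{-1}}/\|X_1\|^2_{\Sigma^{-1}}$, which is the defining expression.

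For part (ii), observe that $\|v\|^2_{\Sigma^{-1}}=\|\bSig^{-1/2}v\|_2^2$ and $\langle a,b\rangle_{\Sigma^{-1}}=(\bSig^{-1/2}a)^\top(\bSig^{-1/2}b)$. The defining formula therefore equals $\|u\|_2^2-(u^\top v)^2/\|v\|_2^2$. The function $\alpha\mapsto\|u-\alpha v\|_2^2$ is a convex quadratic in $\alpha$. Its minimizer is $\alpha=u^\top v/\|v\|_2^2$, and the minimum equals the same expression. Since $u-\alpha v=\bSig^{-1/2}(PX_1-\alpha X_1)$, this gives the middle formula.

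For part (iii), I fix any $\alpha$ and apply $\lambda_{\min}(\bSig^{-1})\|w\|_2^2\le w^\top\bSig^{-1}w\le\lambda_{\max}(\bSig^{-1})\|w\|_2^2$ with $w=PX_1-\alpha X_1$. Lemma \ref{lem:matrix-trace-bound} with $m=1$ gives exactly this. For the lower bound, I take the minimizer $\alpha_\Sigma$ of the weighted problem. Then
\[
\|T\|^2=\|\bSig^{-1/2}(PX_1-\alpha_\Sigma X_1)\|^2\ge\lambda_{\min}(\bSig^{-1})\|PX_1-\alpha_\Sigma X_1\|^2\ge\lambda_{\min}(\bSig^{-1})\min_\alpha\|PX_1-\alpha X_1\|^2.
\]
For the upper bound, I take the Euclidean minimizer $\alpha_E$. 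Then
\[
\|T\|^2\le\|\bSig^{-1/2}(PX_1-\alpha_E X_1)\|^2\le\lambda_{\max}(\bSig^{-1})\min_\alpha\|PX_1-\alpha X_1\|^2.
\]
The explicit form of the Euclidean minimum follows from (ii) with $\bSig=\bI$.

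The main point to get right is the order of the minimum and the spectral bound in (iii). Each side must be evaluated at the appropriate minimizer, namely the weighted one for the lower bound and the Euclidean one for the upper bound. The rest is routine.
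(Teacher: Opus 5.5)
Your proposal is correct and follows the paper's proof essentially step for step: you minimize the convex quadratic in $\alpha$ to get (i)--(ii), then apply the pointwise Rayleigh--Ritz bound to $PX_1-\alpha X_1$ and pass to the minimum over $\alpha$ for (iii). Using the weighted minimizer for the lower bound and the Euclidean minimizer for the upper bound just spells out the paper's ``then minimizing over $\alpha$'' step, and citing Lemma~\ref{lem:matrix-trace-bound} with $m=1$ in place of Rayleigh--Ritz is only a cosmetic difference.
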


\begin{proof}
Define for $\alpha\in\R$ the residual $r(\alpha) \coloneqq PX_1 - \alpha X_1$.cThen by definition
\begin{eqnarray*}
f(\alpha) 
&\coloneqq& \|r(\alpha)\|_{\Sigma^{-1}}^2 \\
&=& (PX_1-\alpha X_1)^\top \bSigma^{-1}(PX_1-\alpha X_1) \\
&=& \|X_{12}\|_{\Sigma^{-1}}^2 
   - 2\alpha \langle X_{12},X_1\rangle_{\Sigma^{-1}}
   + \alpha^2 \|X_1\|_{\Sigma^{-1}}^2.
\end{eqnarray*}
This is a convex quadratic in $\alpha$, minimized at
$\alpha^\star = \frac{\langle X_{12},X_1\rangle_{\Sigma^{-1}}}{\|X_1\|_{\Sigma^{-1}}^2}$. Substituting $\alpha^\star$ gives
$$
\min_{\alpha\in\R} f(\alpha)
= \|X_{12}\|_{\Sigma^{-1}}^2
- \frac{\langle X_{12},X_1\rangle_{\Sigma^{-1}}^2}{\|X_1\|_{\Sigma^{-1}}^2},
$$
which proves the identity in the statement and shows that 
$\|T_{\Pi_1,\Pi_2}\|^2$ is exactly the squared residual after projecting
$X_{12}$ onto the span of $X_1$ under the $\bSigma^{-1}$ inner product. 
This establishes part (i).

For (ii), note that $\|v\|_{\Sigma^{-1}} = \|\bSigma^{-1/2} v\|_2$. 
Setting $u = \bSigma^{-1/2}X_{12}$ and $v = \bSigma^{-1/2}X_1$,
we can rewrite $\|T_{\Pi_1,\Pi_2}\|^2
= \min_{\alpha\in\R}\|u - \alpha v\|_2^2
= \|u\|_2^2 - \frac{(u^\top v)^2}{\|v\|_2^2}$, which is the whitened (Euclidean) projection form.

Finally, for (iii), recall the Rayleigh--Ritz inequality: for all $z\in\R^n$,
$$
\lambda_{\min}(\bSigma^{-1})\|z\|_2^2
\leq z^\top \bSigma^{-1} z
\leq \lambda_{\max}(\bSigma^{-1})\|z\|_2^2.
$$
Applying this to $z=PX_1-\alpha X_1$ and then minimizing over $\alpha$ yields
$$
\lambda_{\min}(\bSigma^{-1}) \min_{\alpha}\|PX_1-\alpha X_1\|_2^2
\leq \|T_{\Pi_1,\Pi_2}\|^2
\leq \lambda_{\max}(\bSigma^{-1}) \min_{\alpha}\|PX_1-\alpha X_1\|_2^2.
$$
The inner minimization admits the explicit form
$$
\min_{\alpha\in\R}\|PX_1-\alpha X_1\|_2^2
= \|PX_1\|_2^2 - \frac{(X_1^\top P X_1)^2}{\|X_1\|_2^2},
$$
which completes the proof.
\end{proof}

\newpage
\subsection{Known Technical Lemmas}

\begin{lemma}[Bernoulli's inequality]\label{lem:bernoulli-ineq}
Let $r \ge 1$ and $x \ge -1$. Then $(1+x)^r \ge 1+rx$.
\end{lemma}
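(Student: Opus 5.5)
We prove Bernoulli's inequality $(1+x)^r \ge 1+rx$ for real $r\ge 1$ and $x\ge -1$. The plan is a convexity argument.

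Define $f(x) \coloneqq (1+x)^r - 1 - rx$ on $[-1,\infty)$. The base $1+x$ is nonnegative there, so $(1+x)^r$ is well defined for real $r\ge 1$. It is continuous on $[-1,\infty)$ and differentiable on $(-1,\infty)$.

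First compute the derivative:
$$
f'(x) = r\bigl((1+x)^{r-1} - 1\bigr).
$$
Because $r-1\ge 0$, the map $u\mapsto u^{r-1}$ is nondecreasing on $[0,\infty)$. Hence $f'(x)\le 0$ for $-1<x\le 0$, and $f'(x)\ge 0$ for $x\ge 0$.

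Next, by the mean value theorem, $f$ is nonincreasing on $[-1,0]$ and nondecreasing on $[0,\infty)$. Continuity at $x=-1$ handles the closed endpoint. So $f$ attains its global minimum at $x=0$, where $f(0)=0$. Therefore $f(x)\ge 0$ on the whole domain, which is exactly the claimed inequality.

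The only delicate point is the case $r=1$. There $f'\equiv 0$ and the inequality holds with equality, so the argument goes through unchanged.

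Alternatively, one can note that $u\mapsto u^r$ is convex on $[0,\infty)$ for $r\ge1$. It therefore lies above its tangent line at $u=1$, namely $u^r\ge 1+r(u-1)$. Substituting $u=1+x$ gives the same result.

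For the application in Lemma \ref{lem:specA} only the integer case $r=n-1$ with $x=\kappa^{1/(n-1)}-1\ge 0$ is needed. That case also follows directly from the binomial expansion, since every term is nonnegative.
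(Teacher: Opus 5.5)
Your proof is correct. The paper gives no proof of its own here: it lists Bernoulli's inequality among the ``Known Technical Lemmas'' and simply invokes it.

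Your monotonicity argument uses $f'(x)=r\bigl((1+x)^{r-1}-1\bigr)$, the sign change at $x=0$, and the mean value theorem. It handles real exponents $r\ge1$ properly, and it treats the closed endpoint $x=-1$ via continuity, where $f(-1)=r-1\ge0$. The tangent-line-to-a-convex-function version is the same idea in one line.

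Your closing observation is accurate and worth keeping. The only place the paper uses the lemma is part (c) of Lemma~\ref{lem:specA}. There it is applied as $x^{n-1}-1\ge(n-1)(x-1)$ with $x=\kappa(\bSig)^{1/(n-1)}\ge1$, that is, with an integer exponent and a nonnegative increment. So the elementary binomial-expansion argument already covers everything the paper actually needs, while your calculus proof establishes the lemma in the full generality in which it is stated.
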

\begin{lemma}[Hanson--Wright inequality]\label{lem:hanson-wright}
Let $X=(X_1,\dots,X_n)^\top$ be a random vector with independent, mean-zero, sub-Gaussian entries satisfying $\|X_i\|_{\psi_2} \leq K$ for $i=1,\dots,n.$ Let \(A\in\R^{n\times n}\) be a fixed matrix. Then for every $t>0$,
$$
\Pr\!\left(\left|X^\top A X-\mathbb{E}[X^\top A X]\right|\ge t\right)
\le
2\exp\left[
-c\min\left(
\frac{t^2}{K^4\|A\|_F^2},
\frac{t}{K^2\|A\|_2}
\right)
\right],
$$
where $c>0$ is an absolute constant, $\|A\|_F$ is the Frobenius norm, and $\|A\|_2$ is the operator norm.
\end{lemma}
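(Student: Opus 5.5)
We follow the Rudelson--Vershynin route. By rescaling $X \mapsto X/K$ we may assume $K=1$, and it suffices to bound the upper tail $\Pr(X^\top A X-\E X^\top AX\ge t)$; the lower tail follows by applying the same argument to $-A$, which yields the factor $2$. We decompose
\[
X^\top A X-\E X^\top AX=\sum_i a_{ii}(X_i^2-\E X_i^2)+\sum_{i\neq j}a_{ij}X_iX_j ,
\]
and bound each piece by $\exp[-c\min(t^2/\|A\|_F^2,\,t/\|A\|_2)]$ at level $t/2$.

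\textbf{Diagonal part.} Since $\|X_i\|_{\psi_2}\le 1$, the centered squares $X_i^2-\E X_i^2$ are independent, mean-zero, and sub-exponential with $\psi_1$-norm bounded by an absolute constant. Bernstein's inequality for sums of independent sub-exponential variables then gives a tail bound
\[
\exp\Bigl[-c\min\Bigl(\tfrac{t^2}{\sum_i a_{ii}^2},\,\tfrac{t}{\max_i|a_{ii}|}\Bigr)\Bigr].
\]
Because $\sum_i a_{ii}^2\le\|A\|_F^2$ and $\max_i|a_{ii}|\le\|A\|_2$, this is of the required form.

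\textbf{Off-diagonal part: reduction.} Let $S=\sum_{i\ne j}a_{ij}X_iX_j$. We aim for the moment-generating-function bound
\[
\E e^{\lambda S}\le \exp(C\lambda^2\|A\|_F^2)\qquad\text{for }|\lambda|\le c/\|A\|_2 ,
\]
after which Markov's inequality with an optimized $\lambda$ gives the claim. First, decoupling: take i.i.d.\ Bernoulli$(1/2)$ selectors $\delta_i$ and set $I=\{i:\delta_i=1\}$. Then $S=4\,\E_\delta\sum_{i\in I,\,j\notin I}a_{ij}X_iX_j$, so by Jensen $\E e^{\lambda S}\le \E_\delta\E_X\exp\bigl(4\lambda\sum_{i\in I,j\notin I}a_{ij}X_iX_j\bigr)$. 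For fixed $I$, the families $(X_i)_{i\in I}$ and $(X_j)_{j\notin I}$ are independent. Conditioning on $(X_j)_{j\notin I}$, the sum is a linear combination of independent sub-Gaussians, so its conditional MGF is at most $\exp\bigl(C\lambda^2\sum_{i\in I}(\sum_{j\notin I}a_{ij}X_j)^2\bigr)$.

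\textbf{Off-diagonal part: Gaussian comparison (main obstacle).} It remains to control $\E\exp(C\lambda^2\|A_{I,I^c}X_{I^c}\|^2)$. This is the step we expect to be hardest. We plan to replace $X_{I^c}$ by a standard Gaussian vector $g$: introduce an independent Gaussian $g'$ and use the identity $\E_{g'}\exp(\mu\langle g',v\rangle)=\exp(\mu^2\|v\|^2/2)$ in both directions. Combined with the sub-Gaussian MGF bound in the other variable, this shows that the quantity is at most $\E\exp(C'\lambda^2\|Bg\|^2)$ with $B=A_{I,I^c}$. Rotation invariance then gives the explicit product $\prod_k(1-2C'\lambda^2 s_k^2)^{-1/2}$ over the singular values $s_k$ of $B$. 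For $\lambda^2\le c/\|A\|_2^2$ every factor is at most $\exp(C''\lambda^2 s_k^2)$, and the product is therefore at most $\exp(C''\lambda^2\|B\|_F^2)\le\exp(C''\lambda^2\|A\|_F^2)$.

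\textbf{Conclusion.} Averaging over $\delta$ preserves this MGF bound. Applying Markov's inequality with $\lambda=\min\bigl(t/(C\|A\|_F^2),\,c/\|A\|_2\bigr)$ yields the off-diagonal tail. A union bound over the diagonal and off-diagonal pieces, followed by restoring the general $K$ (which replaces $t$ by $t/K^2$), gives the stated inequality with the $K^4$ and $K^2$ factors.
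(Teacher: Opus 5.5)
The paper does not prove this lemma. It appears under ``Known Technical Lemmas'' and is quoted from the literature without argument, so there is no in-paper proof to compare your route against.

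Your proposal is the standard Rudelson--Vershynin proof, and it is correct. Its steps are: reduction to $K=1$, Bernstein for the diagonal part, decoupling with Bernoulli selectors plus Jensen, the conditional sub-Gaussian MGF bound, Gaussian linearization, and the explicit chaos MGF $\prod_k(1-2C'\lambda^2 s_k^2)^{-1/2}$.

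Two pieces of constant bookkeeping need tidying.

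First, the choice of $\lambda$ in Markov's inequality. For the off-diagonal tail at level $s=t/2$, take $\lambda=\min\bigl(s/(2C\|A\|_F^2),\,c/\|A\|_2\bigr)$, where $C$ is the constant in your bound $\mathbb{E}e^{\lambda S}\le\exp(C\lambda^2\|A\|_F^2)$. With $\lambda=s/(C\|A\|_F^2)$, the exponent $-\lambda s+C\lambda^2\|A\|_F^2$ is zero, so you get no decay.

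Second, the prefactor. The union bound over the diagonal and off-diagonal pieces, combined with the two tails, gives a prefactor $4$, not $2$. You reduce it to $2$ by replacing $c$ with $c/2$. This works because $4e^{-x}\le 2e^{-x/2}$ for $x\ge 2\log 2$, while for $x<2\log 2$ we have $2e^{-x/2}>1$, so the bound is trivial there.
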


\begin{lemma}[Interlacing Theorem]\label{lem:interlacing-lemma}
Let $A \in \R^{n \times n}$ be square symmetric matrix and $y \in \R^n$, then $\forall i = 1,\cdots, n-1$ and $a \in \R$, we have:
\begin{equation}\label{eq:interlacing}
    \lambda_{n-i+1}(A +ayy^\top) \leq \lambda_{n-i}(A)
\end{equation}
where $\lambda_1(B) > \lambda_2(B) > \cdots > \lambda_{n}(B)$ are the ordered eigen values of a matrix $B$.
\end{lemma}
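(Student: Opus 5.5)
The approach is a direct Courant--Fischer (min--max) argument. The key observation is that the rank-one perturbation $ayy^\top$ vanishes on the hyperplane $y^\perp$. Restricting to a subspace that avoids $y$ therefore costs one dimension but removes the perturbation entirely. Since the perturbation is never evaluated, the sign of $a$ plays no role and no case split is needed.

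Set $k \coloneqq n-i$, so that $k$ ranges over $1,\dots,n-1$; the claim becomes $\lambda_{k+1}(A+ayy^\top)\le \lambda_k(A)$. Write $B\coloneqq A+ayy^\top$, which is symmetric. With eigenvalues in decreasing order, the min--max characterization gives
\begin{equation*}
\lambda_{k+1}(B)=\min_{\dim S = n-k}\ \max_{x\in S,\ \|x\|_2=1} x^\top B x .
\end{equation*}
It therefore suffices to exhibit one subspace $S$ of dimension $n-k$ on which $x^\top Bx\le \lambda_k(A)$ for every unit vector $x\in S$.

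To construct $S$, let $u_1,\dots,u_n$ be orthonormal eigenvectors of $A$ with $Au_j=\lambda_j(A)u_j$. Let $U\coloneqq\operatorname{span}\{u_k,\dots,u_n\}$, which has dimension $n-k+1$. Then $U\cap y^\perp$ has dimension at least $(n-k+1)+(n-1)-n=n-k$, and we take $S$ to be any $(n-k)$-dimensional subspace of it. If $y=0$, simply take $S\subset U$; the bound then follows from the usual ordering $\lambda_{k+1}(A)\le\lambda_k(A)$.

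For a unit vector $x\in S$, we have $y^\top x=0$, so $x^\top Bx = x^\top Ax + a(y^\top x)^2 = x^\top Ax$. Because $x\in U$, expanding $x=\sum_{j\ge k}c_ju_j$ with $\sum_j c_j^2=1$ gives $x^\top Ax=\sum_{j\ge k}\lambda_j(A)c_j^2\le \lambda_k(A)$. Taking the maximum over $x\in S$ and then the minimum over subspaces yields $\lambda_{k+1}(B)\le\lambda_k(A)$, which is exactly \eqref{eq:interlacing}.

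The only step needing care is the dimension count for $U\cap y^\perp$, together with the degenerate case $y=0$. Everything else is the standard min--max principle, so I do not expect a genuine obstacle. One remark: the lemma writes the eigenvalue ordering with strict inequalities, but the argument uses only the non-strict ordering and so covers repeated eigenvalues as well.
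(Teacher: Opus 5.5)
Your proof is correct. With $k=n-i$, Courant--Fischer gives $\lambda_{k+1}(B)=\min_{\dim S=n-k}\max_{x\in S,\|x\|_2=1}x^\top Bx$. Your test subspace $S\subseteq\operatorname{span}\{u_k,\dots,u_n\}\cap y^\perp$ has the right dimension, and since $x^\top(A+ayy^\top)x=x^\top Ax$ on $y^\perp$, you get $\lambda_{k+1}(A+ayy^\top)\le\lambda_k(A)$ for every $a\in\R$.

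The paper gives no proof of this statement. It lists it under ``Known Technical Lemmas'' and uses it only once, in the proof of Lemma~\ref{lem:specA}(b), with $a=1$ and $y=\bLam^{-1/2}\bu$. So your argument supplies what the paper takes for granted.

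The claim cannot be obtained from the form of Weyl's inequality the paper records (Lemma~\ref{lem:weyl-ineq}). That form compares $\lambda_i(A+B)$ only with $\lambda_i(A)$. It yields $\lambda_{k+1}(A+ayy^\top)\le\lambda_{k+1}(A)+\max(a,0)\|y\|_2^2$, which suffices for $a\le 0$ but not for $a>0$, the case the paper actually uses. Going through Weyl would need the general form $\lambda_{i+j-1}(A+B)\le\lambda_i(A)+\lambda_j(B)$ with $j=2$. Your min--max route is self-contained and needs no case split on the sign of $a$.

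Your remark on the ordering is also well placed. The strict chain $\lambda_1>\cdots>\lambda_n$ in the statement is unjustified: in the paper's own application the eigenvalues of $\bLam^{-1}$ can repeat, for instance in the iid case $\bSig=\tau^2\bI_n$ that the paper itself discusses. The non-strict version you prove is the one actually needed.
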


\begin{lemma}[Von Neumann's trace inequality]\label{lem:von-neuman}
 for any $n \times n$ complex matrices $\bA$ and $B$ with singular values
$\alpha_1 \geq \alpha_2 \geq \cdots \geq \alpha_n$ and $\beta_1 \geq \beta_2 \geq \cdots \geq \beta_n$ respectively, we have
\begin{equation*}
 |\tr(AB)| \leq \sum_{i=1}^{n} \alpha_i \beta_i
\end{equation*}
with equality if and only if $\bA$ and $B^{\dagger}$ share singular vectors.
\end{lemma}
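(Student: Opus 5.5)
This is the classical von Neumann inequality. The plan is to reduce it, via singular value decompositions, to a linear optimization over doubly stochastic matrices, and then treat the equality clause separately.

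\textbf{Reduction.} Write $\bA = \bu_A \,\mathrm{diag}(\alpha)\, \bV_A^{\ast}$ and $B = \bu_B\, \mathrm{diag}(\beta)\, \bV_B^{\ast}$ with unitary factors. By cyclicity of the trace,
\[
\tr(\bA B)=\tr\bigl(\mathrm{diag}(\alpha)\, C\, \mathrm{diag}(\beta)\, D\bigr),
\]
where $C \coloneqq \bV_A^\ast \bu_B$ and $D \coloneqq \bV_B^\ast \bu_A$ are unitary. Expanding gives $\tr(\bA B)=\sum_{i,j}\alpha_i\beta_j\, c_{ij} d_{ji}$.

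\textbf{Bound by a doubly stochastic average.} Apply the triangle inequality and then $|c_{ij}||d_{ji}|\le \tfrac12(|c_{ij}|^2+|d_{ji}|^2)$. This gives
\[
|\tr(\bA B)|\le \sum_{i,j}\alpha_i\beta_j\, s_{ij},
\qquad
s_{ij}\coloneqq \tfrac12\bigl(|c_{ij}|^2+|d_{ji}|^2\bigr).
\]
Since $C$ and $D$ are unitary, the rows and columns of $(|c_{ij}|^2)$ and of $(|d_{ji}|^2)$ each sum to one. Hence $S=(s_{ij})$ is doubly stochastic.

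\textbf{Maximization over the Birkhoff polytope.} The map $S\mapsto\sum_{i,j}\alpha_i\beta_j s_{ij}$ is linear. By Birkhoff's theorem its maximum over doubly stochastic matrices is attained at a permutation matrix $\sigma$. For a permutation, the value is $\sum_i \alpha_i \beta_{\sigma(i)}$. By the rearrangement inequality, with both sequences sorted decreasingly, this is at most $\sum_i\alpha_i\beta_i$.

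To avoid quoting Birkhoff, one can instead use Abel summation. Write $\alpha_i=\sum_{k\ge i}(\alpha_k-\alpha_{k+1})$ with $\alpha_{n+1}=0$, and similarly for $\beta$. The bound then reduces to $\sum_{i\le k,\,j\le l}s_{ij}\le\min(k,l)$, which is immediate from the row and column sums equalling one. This proves the inequality.

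\textbf{Equality clause (main obstacle).} The delicate part is the equality characterization, for two reasons.
\begin{itemize}
\item As literally stated, ``share singular vectors'' must be read as: there exist unitary $\bu,\bV$ with $\bA=\bu\,\mathrm{diag}(\alpha)\,\bV^\ast$ and $B^{\dagger}$ (up to a common phase of the trace) equal to $\bu\,\mathrm{diag}(\beta)\,\bV^\ast$, with the same decreasing ordering.
\item Repeated or zero singular values make the singular vectors non-unique, so the phrase only makes sense with this existential reading.
\end{itemize}
Sufficiency is a direct computation. For necessity I would track each inequality in the chain:
\begin{itemize}
\item equality in the triangle inequality forces the terms $c_{ij}d_{ji}$ to have a common phase;
\item equality in AM--GM forces $|c_{ij}|=|d_{ji}|$ whenever $\alpha_i\beta_j>0$;
\item equality in the Abel-summation step forces $\sum_{i\le k,\,j\le l}s_{ij}=\min(k,l)$ at every index where $\alpha_k>\alpha_{k+1}$ and $\beta_l>\beta_{l+1}$.
\end{itemize}
The last condition shows that $C$ and $D^\ast$ are block diagonal with respect to the groups of equal singular values. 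Absorbing these unitary blocks into the singular vector bases, which is allowed inside each eigenspace of equal singular values, then yields common singular vectors.

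I expect the bookkeeping for tied and zero singular values to be the only real difficulty. Since the paper only uses the inequality part, in \eqref{eq:trAB-lb}-type bounds and in bounding $|\tr(\bA_4)|$, I would prove the inequality in full and treat the equality clause as stated in this corrected form.
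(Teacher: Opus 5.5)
The paper gives no proof of this lemma. It is listed among the known technical lemmas, and only the inequality is used later.

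\textbf{Inequality part.} Your proof is correct and complete: the SVD reduction, the doubly stochastic matrix $S$, and then either Birkhoff plus rearrangement or Abel summation. Your existential, ordered, common-phase reading of ``share singular vectors'' is the right one, and sufficiency is indeed immediate.

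\textbf{Gap in necessity.} The step ``the last condition shows that $C$ and $D^\ast$ are block diagonal with respect to the groups of equal singular values'' is false whenever the tie groups of $\alpha$ and $\beta$ differ. Note also that $C$ depends on which SVDs you picked.

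For a counterexample, take $\alpha=(2,1,1)$, $\beta=(1,1,0)$, $A=U\,\mathrm{diag}(\alpha)V^\ast$ and $B=V\,\mathrm{diag}(\beta)U^\ast$, so that $\tr(AB)=3=\sum_i\alpha_i\beta_i$. Now choose the factors $U_A=U\,\mathrm{diag}(1,Q)$, $V_A=V\,\mathrm{diag}(1,Q)$, $U_B=V\,\mathrm{diag}(R,1)$, $V_B=U\,\mathrm{diag}(R,1)$, with $Q,R$ generic $2\times 2$ unitaries. These are legitimate SVDs, and they give $C=\mathrm{diag}(1,Q^\ast)\,\mathrm{diag}(R,1)$, which has $c_{12}\neq 0$ and $c_{23}\neq 0$. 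So $C$ is block diagonal neither for $\{1\},\{2,3\}$ nor for $\{1,2\},\{3\}$.

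What your Abel equalities actually give is only a staircase zero pattern. Let $k$ be an $\alpha$-breakpoint ($\alpha_k>\alpha_{k+1}$) and $l$ a $\beta$-breakpoint. Then $s_{ij}=0$, hence $c_{ij}=d_{ji}=0$:
\begin{itemize}
\item for $i\le k$, $j>l$ when $k\le l$;
\item for $j\le l$, $i>k$ when $l\le k$.
\end{itemize}
Genuine block splitting occurs only at common breakpoints.

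\textbf{Missing idea: a factorization of $C$, not block diagonality.} Read the zero pattern as a statement about flags.
\begin{itemize}
\item With $E_m=\mathrm{span}(e_1,\dots,e_m)$, it says $E_k\subseteq CE_l$ when $k\le l$ and $CE_l\subseteq E_k$ when $l\le k$. So all the $E_k$ and $CE_l$ are pairwise nested and form a single chain.
\item Take an orthonormal basis $F$ adapted to that chain. It satisfies $FE_k=E_k$ and $C^\ast FE_l=E_l$.
\item Hence $G_\alpha\coloneqq F$ is block diagonal for the tie groups of $\alpha$, $G_\beta\coloneqq C^\ast F$ is block diagonal for those of $\beta$, and $C=G_\alpha G_\beta^\ast$.
\end{itemize}

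Your triangle and AM--GM equalities give $d_{ji}=e^{i\theta}\overline{c_{ij}}$ whenever $\alpha_i\beta_j>0$. So $D=e^{i\theta}C^\ast$ when all singular values are positive.

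To finish, replace $(U_A,V_A)$ by $(U_AG_\alpha,V_AG_\alpha)$ and $(U_B,V_B)$ by $(U_BG_\beta,V_BG_\beta)$. This leaves $A$ and $B$ unchanged, because the $G$'s commute with the diagonal factors. It turns $C$ into $I$ and $D$ into $e^{i\theta}I$, i.e.\ $B^{\dagger}=e^{-i\theta}U_A\,\mathrm{diag}(\beta)V_A^\ast$.

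With zero singular values, $D=e^{i\theta}C^\ast$ holds only on the block where $\alpha_i\beta_j>0$. The complementary unitary block must then be absorbed into the null-space singular vectors. That is the bookkeeping you anticipated, but it has to be built on this factorization, not on block diagonality of $C$.
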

A simple corollary to this is the following result: 

\begin{lemma}
For Hermitian $n \times n$ positive semi-definite complex matrices 
$A$ and $B$, where the eigenvalues are sorted decreasingly
$
\alpha_1 \geq \alpha_2 \geq \cdots \geq \alpha_n$  and $\beta_1 \geq \beta_2 \geq \cdots \geq \beta_n$
we have
\begin{equation}\label{eq:trAB-lb}
    \sum_{i=1}^{n} \alpha_i \beta_{n - i + 1} \leq \tr(AB) \leq \sum_{i=1}^{n} \alpha_i \beta_i.   
\end{equation}
\end{lemma}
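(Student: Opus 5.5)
The plan is to derive both inequalities from Lemma~\ref{lem:von-neuman}: the upper bound directly, and the lower bound by applying the same inequality to a spectrally shifted copy of $B$.

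\textbf{Step 1: $\tr(AB)$ is real and nonnegative.} Since $A\succeq 0$, it has a PSD square root $A^{1/2}$. By cyclicity,
\[
\tr(AB)=\tr\bigl(A^{1/2}BA^{1/2}\bigr),
\]
and $A^{1/2}BA^{1/2}$ is Hermitian and PSD. Hence $\tr(AB)$ is real and nonnegative, so $|\tr(AB)|=\tr(AB)$.

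\textbf{Step 2: upper bound.} For a Hermitian PSD matrix the singular values coincide with the eigenvalues, both sorted decreasingly. Lemma~\ref{lem:von-neuman} therefore gives
\[
\tr(AB)=|\tr(AB)|\le \sum_{i=1}^n \alpha_i\beta_i .
\]

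\textbf{Step 3: lower bound.} Set $C\coloneqq \beta_1 I-B$. It is Hermitian and PSD, since every eigenvalue $\beta_1-\beta_j$ is nonnegative. Its eigenvalues sorted decreasingly are $\gamma_i=\beta_1-\beta_{n-i+1}$, because $j\mapsto \beta_1-\beta_j$ reverses the order. Steps 1 and 2 applied to the pair $(A,C)$ give
\[
\beta_1\tr(A)-\tr(AB)=\tr(AC)\le \sum_{i=1}^n\alpha_i\bigl(\beta_1-\beta_{n-i+1}\bigr).
\]
Using $\tr(A)=\sum_i\alpha_i$, the $\beta_1$ terms cancel, leaving $\tr(AB)\ge \sum_{i=1}^n\alpha_i\beta_{n-i+1}$.

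\textbf{Main obstacle.} The only delicate points are bookkeeping. First, one must check that $C$ stays PSD, which is why the shift is by $\beta_1$. Second, one must correctly reverse the ordering of the eigenvalues of $C$ so that they match the sorted input required by Lemma~\ref{lem:von-neuman}.

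\textbf{Alternative route.} Diagonalize $A=U\operatorname{diag}(\alpha)U^\ast$ and $B=V\operatorname{diag}(\beta)V^\ast$. Then $\tr(AB)=\sum_{i,j}\alpha_i\beta_j|w_{ij}|^2$ with $W=U^\ast V$ unitary, so $(|w_{ij}|^2)$ is doubly stochastic. A linear functional on the Birkhoff polytope is extremized at permutation matrices, and the rearrangement inequality then identifies the identity permutation as the maximizer and the reversal permutation as the minimizer. This route gives both bounds at once.
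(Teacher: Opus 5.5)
Your proof is correct and follows the route the paper indicates. The paper states this lemma without proof, calling it a ``simple corollary'' of Lemma~\ref{lem:von-neuman}, and your argument supplies the omitted details: you get the upper bound directly from von Neumann after noting $\tr(AB)=\tr(A^{1/2}BA^{1/2})\ge 0$ and that singular values of PSD matrices coincide with their eigenvalues, and you get the lower bound by applying the same bound to the PSD shift $\beta_1 I-B$, whose decreasingly sorted eigenvalues are $\beta_1-\beta_{n-i+1}$.
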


\begin{lemma}[Weyl's inequality]\label{lem:weyl-ineq}
Let $A,B \in \R^{n\times n}$ be symmetric matrices, and let
\begin{equation*}
\lambda_1(M) \ge \lambda_2(M) \ge \cdots \ge \lambda_n(M)
\end{equation*}
denote the ordered eigenvalues of a symmetric matrix $M$. Then for each $i=1,\dots,n$,
\begin{equation*}
\lambda_i(A) + \lambda_n(B) \leq 
\lambda_i(A+B) \leq
\lambda_i(A) + \lambda_1(B).
\end{equation*}
Equivalently,
\begin{equation*}
\left| \lambda_i(A+B)-\lambda_i(A) \right|
\le
\|B\|_2,
\qquad i=1,\dots,n.
\end{equation*}
\end{lemma}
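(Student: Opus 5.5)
We will prove Weyl's inequality from the Courant--Fischer min--max characterization of eigenvalues of a symmetric matrix. For symmetric $M\in\R^{n\times n}$ with eigenvalues $\lambda_1(M)\ge\cdots\ge\lambda_n(M)$,
\begin{equation*}
\lambda_i(M)=\max_{\dim V=i}\ \min_{x\in V,\ \|x\|_2=1} x^\top M x .
\end{equation*}
The plan is to compare the Rayleigh quotients of $A+B$ and $A$ pointwise, then pass the pointwise comparison through the max--min.

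The first step is the Rayleigh--Ritz sandwich, already used in the proof of Lemma~\ref{lem:T-bound}(iii): for every unit vector $x$,
\begin{equation*}
\lambda_n(B)\le x^\top B x\le \lambda_1(B).
\end{equation*}
Adding $x^\top A x$ gives
\begin{equation*}
x^\top A x+\lambda_n(B)\le x^\top (A+B)x\le x^\top A x+\lambda_1(B)
\end{equation*}
for all unit $x$.

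For the upper bound, fix any $i$-dimensional subspace $V$. Taking the minimum over unit $x\in V$ preserves the inequality, because the additive constant $\lambda_1(B)$ does not depend on $x$:
\begin{equation*}
\min_{x\in V,\ \|x\|_2=1} x^\top(A+B)x\le \min_{x\in V,\ \|x\|_2=1} x^\top A x+\lambda_1(B).
\end{equation*}
Maximizing over $V$ then yields $\lambda_i(A+B)\le\lambda_i(A)+\lambda_1(B)$. The lower bound $\lambda_i(A)+\lambda_n(B)\le\lambda_i(A+B)$ follows by the same argument with $\lambda_n(B)$ in place of $\lambda_1(B)$. Alternatively, one can apply the upper bound to the decomposition $A=(A+B)+(-B)$ and use $\lambda_1(-B)=-\lambda_n(B)$.

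For the equivalent form, combine the two bounds:
\begin{equation*}
\lambda_n(B)\le\lambda_i(A+B)-\lambda_i(A)\le\lambda_1(B).
\end{equation*}
Since $B$ is symmetric, $\|B\|_2=\max\{|\lambda_1(B)|,|\lambda_n(B)|\}$, so $|\lambda_i(A+B)-\lambda_i(A)|\le\|B\|_2$.

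The only nontrivial ingredient is the Courant--Fischer theorem itself, which we take as standard. If a self-contained argument is preferred, we would prove the needed direction by a dimension count. Let $U$ be the span of the top $i$ eigenvectors of $A+B$, and let $W$ be the span of the bottom $n-i+1$ eigenvectors of $A$. Then $\dim U+\dim W=n+1$, so $U\cap W$ contains a unit vector $x$. For this $x$, $x^\top(A+B)x\ge\lambda_i(A+B)$ and $x^\top A x\le\lambda_i(A)$. Hence
\begin{equation*}
\lambda_i(A+B)\le x^\top A x+x^\top B x\le\lambda_i(A)+\lambda_1(B).
\end{equation*}
Choosing the correct pair of complementary subspaces in this dimension count is the main point where care is needed.
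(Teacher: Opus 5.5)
Your proof is correct and complete. The pointwise Rayleigh-quotient sandwich, passed through the Courant--Fischer max--min, gives both two-sided bounds. Your dimension-count variant, pairing the top $i$ eigenvectors of $A+B$ with the bottom $n-i+1$ eigenvectors of $A$ and intersecting, is a valid self-contained substitute. The identity $\|B\|_2=\max\{|\lambda_1(B)|,|\lambda_n(B)|\}$ then yields the norm form.

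The paper has no proof to compare against. It lists this result among the ``Known Technical Lemmas'' and takes it as standard. Its only use is in Lemma~\ref{lem:specA}(a), in the extreme-eigenvalue form $\lambda_n(A)+\lambda_n(B)\le\lambda_i(A+B)\le\lambda_1(A)+\lambda_1(B)$, which follows immediately from your bounds.

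One small remark: you derive the norm form from the two-sided form, which is all the lemma requires. The reverse direction, which justifies the word ``equivalently,'' also holds. Apply the norm form to $B-cI_n$ with $c=\tfrac12\bigl(\lambda_1(B)+\lambda_n(B)\bigr)$ to recover $\lambda_n(B)\le\lambda_i(A+B)-\lambda_i(A)\le\lambda_1(B)$.
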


\begin{lemma}[AM--GM Inequality]
    Let $\alpha_1, \alpha_2, \dots, \alpha_n $ be positive real numbers. Then
    \begin{equation}\label{eq:AMGM}
        \frac{\alpha_1 + \alpha_2 + \dots + \alpha_n}{n} \geq \left(\alpha_1 \alpha_2 \dotsm \alpha_n \right)^{\tfrac1n}
    \end{equation}
    with equality if and only if 
    \(\alpha_1 = \alpha_2 = \dots = \alpha_n.\)
\end{lemma}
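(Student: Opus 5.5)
We prove the AM--GM inequality \eqref{eq:AMGM} through the strict concavity of the logarithm, which also gives the equality case. Since every $\alpha_i>0$, we may take logarithms. Because $\log$ is monotone increasing, the inequality \eqref{eq:AMGM} is equivalent to
\begin{equation*}
\log\left(\frac{1}{n}\sum_{i=1}^n \alpha_i\right) \;\geq\; \frac{1}{n}\sum_{i=1}^n \log \alpha_i .
\end{equation*}
This is exactly Jensen's inequality for the concave function $\log$ on $(0,\infty)$, using equal weights $1/n$.

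To keep the argument self-contained, we would avoid citing Jensen in full. Instead we use the tangent-line bound $\log x \leq x-1$ for all $x>0$, with equality if and only if $x=1$. This bound follows because $g(x)=x-1-\log x$ has $g'(x)=1-1/x$. Hence $g$ decreases on $(0,1)$, increases on $(1,\infty)$, and has its unique minimum $g(1)=0$.

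Set $A\coloneqq \frac{1}{n}\sum_i \alpha_i>0$ and apply the tangent-line bound to each ratio $x_i=\alpha_i/A$. Summing over $i$ gives
\begin{equation*}
\sum_{i=1}^n \log\frac{\alpha_i}{A} \;\leq\; \sum_{i=1}^n\left(\frac{\alpha_i}{A}-1\right) \;=\; \frac{nA}{A}-n \;=\; 0 .
\end{equation*}
Therefore $\sum_i \log \alpha_i \leq n\log A$. Exponentiating and taking the $n$-th root yields $(\alpha_1\cdots\alpha_n)^{1/n}\leq A$, which is \eqref{eq:AMGM}.

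For the equality case, note that the summed inequality is an equality if and only if each term is. Each term is an equality exactly when $\alpha_i/A=1$, that is, when $\alpha_i=A$ for every $i$, so all the $\alpha_i$ coincide. The converse is immediate: if all $\alpha_i$ are equal, both sides of \eqref{eq:AMGM} equal the common value.

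There is no genuine obstacle here. The only point needing care is the equality characterization, which depends on the \emph{strictness} of $\log x<x-1$ for $x\neq 1$; the sign analysis of $g'$ above establishes it. A fallback is Cauchy's forward--backward induction (prove the case $n=2^k$ by pairing terms, then pad with copies of the mean to reach general $n$). We prefer the tangent-line argument because it handles all $n$ and the equality case at once.
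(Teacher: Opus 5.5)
Your proof is correct. The bound $\log x \le x-1$, strict for $x\neq 1$, applied to $x_i=\alpha_i/A$ and summed gives $\sum_i \log\alpha_i \le n\log A$. Every summand has nonnegative slack $x_i-1-\log x_i$, so equality forces $x_i=1$ for all $i$, which handles the equality case too.

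There is nothing in the paper to compare against. The lemma is listed among the known technical lemmas without proof and is used only once, in the proof of Lemma~\ref{lem:specA}(b). There it is applied to the $n-1$ positive ratios $\lambda_{i+1}(\bSig)/\lambda_i(\bSig)$, whose product telescopes to $1/\kappa(\bSig)$, and only the inequality is needed, not the equality characterization.
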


\begin{lemma}[\cite{vershynin2020high}]\label{lem:concen-gaus-qf}
    Let $A$ be an $m \times n$ matrix, $\Gamma = A^\top A$, and $g \sim N(\bzero, \bI_n)$. Then for all $t > 0$,
    $$
    \pr(||Ag||^2_2 > \tr(\Gamma) + 2\sqrt{\tr(\Gamma^2)t} + 2||\Gamma||_2t)\leq \exp(-t)
    $$
\end{lemma}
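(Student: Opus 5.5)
Since $\Gamma = A^\top A$ is symmetric positive semidefinite, I will first reduce to a weighted sum of independent $\chi^2_1$ variables and then run a Chernoff (sub-gamma) argument in the style of Laurent and Massart.

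\textbf{Reduction.} Write $\Gamma = \bQ\bLam\bQ^\top$ with $\bQ$ orthogonal and $\bLam=\mathrm{diag}(\lambda_1,\dots,\lambda_n)$, $\lambda_i\ge 0$. Then
$$\|Ag\|_2^2 = g^\top \Gamma g = \sum_{i=1}^n \lambda_i z_i^2, \qquad z=\bQ^\top g\sim\calN(\bzero,\bI_n),$$
by rotation invariance of the standard Gaussian. In terms of the eigenvalues, $\tr(\Gamma)=\sum_i\lambda_i$, $\tr(\Gamma^2)=\sum_i\lambda_i^2=\|\lambda\|_2^2$ and $\|\Gamma\|_2=\max_i\lambda_i=\|\lambda\|_\infty$. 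So it suffices to bound the upper tail of the centered sum $S\coloneqq\sum_i\lambda_i(z_i^2-1)$.

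\textbf{Moment generating function bound.} For $0\le s<1/(2\|\lambda\|_\infty)$, independence gives
$$\log\E e^{sS}=\sum_i\Bigl(-\tfrac12\log(1-2s\lambda_i)-s\lambda_i\Bigr).$$
Using the elementary inequality $-\tfrac12\log(1-2u)-u\le \dfrac{u^2}{1-2u}$ for $0\le u<1/2$ (compare power series: the left side is $\sum_{k\ge2}(2u)^k/(2k)$, which is dominated termwise by $u^2\sum_{j\ge0}(2u)^j$), I will obtain
$$\log\E e^{sS}\le \frac{s^2\|\lambda\|_2^2}{1-2s\|\lambda\|_\infty}.$$
This says $S$ is sub-gamma on the right tail with variance factor $v=2\|\lambda\|_2^2$ and scale parameter $c=2\|\lambda\|_\infty$.

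\textbf{Tail inversion (main step).} The delicate part is converting this bound into exactly the stated deviation level. The standard sub-gamma inversion gives
$$\Prob\bigl(S>\sqrt{2vt}+ct\bigr)\le e^{-t}.$$
To verify it, apply Markov's inequality $\Prob(S>x)\le e^{-sx}\E e^{sS}$ at $x=\sqrt{2vt}+ct$. Rather than optimizing over $s$ exactly, I will plug in the explicit choice
$$s=\frac{\sqrt{2t/v}}{1+c\sqrt{2t/v}},$$
which satisfies $0\le s<1/c$. Routine algebra then shows that the exponent $sx-\dfrac{vs^2}{2(1-cs)}$ equals exactly $t$. Substituting $v$ and $c$ gives $\sqrt{2vt}=2\sqrt{\tr(\Gamma^2)\,t}$ and $ct=2\|\Gamma\|_2 t$. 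Adding back $\tr(\Gamma)$ recovers the displayed threshold $\tr(\Gamma)+2\sqrt{\tr(\Gamma^2)t}+2\|\Gamma\|_2t$, which completes the argument.
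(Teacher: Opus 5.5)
Your proof is correct. The pieces all check out:
\begin{itemize}
\item the reduction $\|Ag\|_2^2=\sum_i\lambda_i z_i^2$;
\item the termwise comparison $\sum_{k\ge2}(2u)^k/(2k)\le\sum_{k\ge2}2^{k-2}u^k$, valid since $2^{k-1}/k\le 2^{k-2}$ for $k\ge2$;
\item the choice $s=a/(1+ca)$ with $a=\sqrt{2t/v}$, for which $1-cs=1/(1+ca)$, $sx=t(2+ca)/(1+ca)$ and $vs^2/\bigl(2(1-cs)\bigr)=t/(1+ca)$, so the Chernoff exponent is exactly $t$;
\item $s<1/c=1/(2\|\Gamma\|_2)$, which keeps you in the range where the moment generating function bound holds.
\end{itemize}
This is the Laurent--Massart (Hsu--Kakade--Zhang) argument.

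The paper gives no proof of its own. The lemma is listed among the known technical lemmas and attributed to \cite{vershynin2020high}. The natural comparison is therefore with the only related tool the paper states, the Hanson--Wright inequality (Lemma~\ref{lem:hanson-wright}). That inequality covers general sub-Gaussian vectors, but only with an unspecified absolute constant $c$. Your use of the exact $\chi^2_1$ moment generating function is what delivers the explicit one-sided factors $2\sqrt{\tr(\Gamma^2)t}$ and $2\|\Gamma\|_2t$. The proof of Theorem~\ref{thm:error-betahat} needs these to obtain its $\sqrt{5\lambda_{\max}(\bSig)\kappa(\bSig)}$ constant.

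The one loose end is the degenerate case $\Gamma=0$, where $v=c=0$ and your $s$ is undefined. There $\|Ag\|_2^2=0$ equals the threshold, so the probability is $0$ and the claim is trivial.
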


\begin{lemma}[\cite{vershynin2020high}]\label{lem:concen-chisd}
    Let $g \sim N(\bzero, \bI_n)$. Then for all $t > 0$,
    $$
    \pr\left(||g/\sqrt{n}||_2 > 1 -\sqrt{2t/n}\right)\leq \exp(-t)
    $$
\end{lemma}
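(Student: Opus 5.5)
The plan is to test the statement exactly as worded before trying to prove it. The event $\{\|g/\sqrt{n}\|_2 > 1-\sqrt{2t/n}\}$ is an \emph{upper}-tail event whose threshold lies \emph{below} the typical value of $\|g/\sqrt{n}\|_2$. So the standard route through Gaussian concentration of the $1$-Lipschitz map $g\mapsto\|g\|_2$ can only give a bound of the form $e^{-t}$ on the complementary event $\{\|g\|_2 < \sqrt{n}-\sqrt{2t}\}$ (up to the small gap between $\E\|g\|_2$ and $\sqrt{n}$). It cannot give such a bound on the event written in the statement. I would therefore first check whether the statement can hold at all, by evaluating the left-hand side in regimes where it is explicit.

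\textbf{Step 1 (trivial regime).} Take any $t$ with $2t>n$. Then $1-\sqrt{2t/n}<0$, and since $\|g/\sqrt{n}\|_2\ge 0$, the event holds with probability one. The claimed bound would require $1\le e^{-t}$, which is false for every $t>0$. The concrete instance $n=1$, $t=1$ already violates the inequality.

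\textbf{Step 2 (nontrivial regime, to rule out a mere boundary artifact).} Fix $t=n/8$, so the threshold is $1-\sqrt{1/4}=1/2$. By the weak law of large numbers, $\|g\|_2^2/n\to 1$ in probability. Hence $\pr(\|g/\sqrt{n}\|_2>1/2)\to 1$, while $e^{-n/8}\to 0$, so the inequality fails for all large $n$. More generally, for any fixed $t>0$ the left side tends to $1$ as $n\to\infty$.

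\textbf{Conclusion.} The statement as worded is false, and the proposal is to record this counterexample rather than supply a proof. The main obstacle is that there is nothing to prove in the stated direction. The only salvageable content is the reversed inequality $\pr(\|g/\sqrt{n}\|_2<1-\sqrt{2t/n})\le e^{-t}$. That is exactly the form in which the lemma is \emph{used} in the proof of Theorem \ref{thm:error-betahat}, to control $\pr(\|\bX/\sqrt{n}\|_2<1-\sqrt{g(n)/n})$. For that use I would cite Lipschitz Gaussian concentration together with $\E\|g\|_2\ge\sqrt{n}-1/\sqrt{n}$, or Laurent--Massart's lower chi-square tail $\pr(\|g\|_2^2\le n-2\sqrt{nt})\le e^{-t}$ combined with $\sqrt{n-2\sqrt{nt}}\ge\sqrt{n}-\sqrt{2t}$. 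I would flag the direction of the inequality in Lemma \ref{lem:concen-chisd} as a typo to be corrected.
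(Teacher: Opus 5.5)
Your verdict is correct. As printed, the lemma bounds the probability of an event whose probability tends to one, and both of your counterexamples are valid. For $n=1$, $t=1$ the threshold $1-\sqrt2$ is negative, so the left side equals $1>e^{-1}$. For $t=n/8$ the left side tends to $1$ while $e^{-n/8}\to 0$.

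The paper gives no argument for this lemma; it is listed among the known technical lemmas with only a citation to Vershynin. So there is no proof to compare against. The inequality sign inside the probability is simply reversed, and the intended content is the lower-tail bound, which is how the lemma is invoked in the proof of Theorem~\ref{thm:error-betahat}. Flagging it as a typo is the right response.

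Your salvage remarks need two small corrections.

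\emph{Neither cited route gives the reversed statement for every $t>0$.} Lipschitz concentration yields the threshold $\sqrt n-1/\sqrt n-\sqrt{2t}$, not $\sqrt n-\sqrt{2t}$. The comparison $\sqrt{n-2\sqrt{nt}}\ge\sqrt n-\sqrt{2t}$ that you pair with Laurent--Massart holds only for $t\le(3-2\sqrt2)n$; it fails, for example, at $n=100$, $t=20$. This leaves the range $(3-2\sqrt2)n<t<n/2$ uncovered.

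\emph{The paper's use is not literally the reversed lemma.} It has threshold $1-\sqrt{g(n)/n}$, with no factor $2$, and bound $e^{-g(n)}$. The reversed lemma at $t=g(n)/2$ would give only $e^{-g(n)/2}$.

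A direct Chernoff bound fixes both points. Using $\E e^{-\lambda\|g\|_2^2}=(1+2\lambda)^{-n/2}$ with the optimal $\lambda$, and $\log(1-u)\le -u-u^2/2$, one gets for $u\in(0,1)$
\[
\pr\bigl(\|g\|_2^2\le n(1-u)^2\bigr)\le \exp\Bigl\{\tfrac n2\bigl[2u-u^2+2\log(1-u)\bigr]\Bigr\}\le e^{-nu^2}.
\]
Taking $u=\sqrt{2t/n}$ gives the corrected lemma with the stronger bound $e^{-2t}$; the case $2t\ge n$ is vacuous because the event is then empty. Taking $u=\sqrt{g(n)/n}$ gives exactly the inequality displayed in the paper's proof.
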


\newpage
\section{Covariance Parameters Estimates}\label{suppsec:variance-estimates}
\begin{figure}[!htbp]
    \centering
    \begin{subfigure}{0.9\textwidth}
        \centering
        \includegraphics[width=\textwidth]{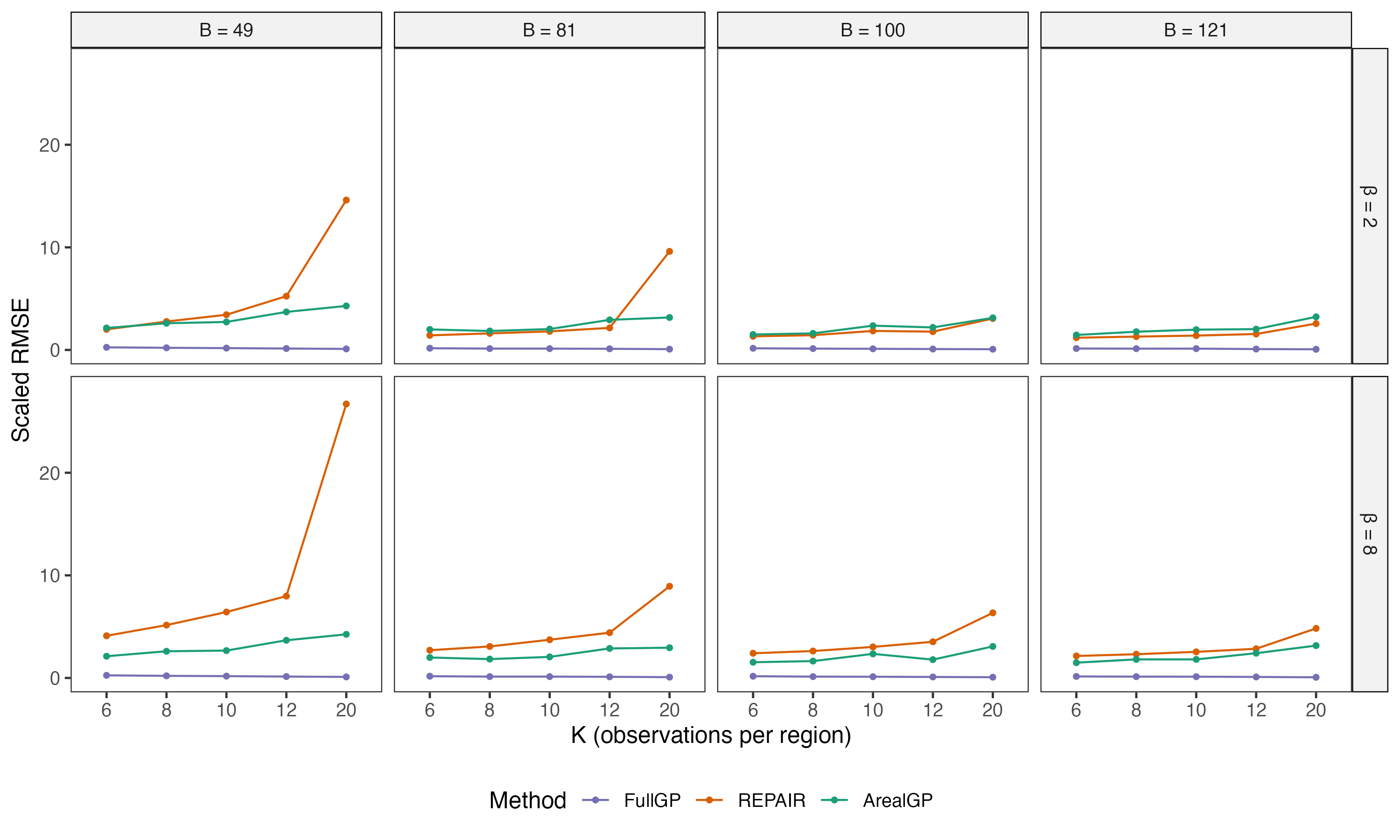}
        \caption{Scaled RMSE of $\hat{\tau}^2$.}
        \label{fig:tausq_rmse_lines}
    \end{subfigure}
    
    \vspace{0.5em}
    
    \begin{subfigure}{0.9\textwidth}
        \centering
        \includegraphics[width=\textwidth]{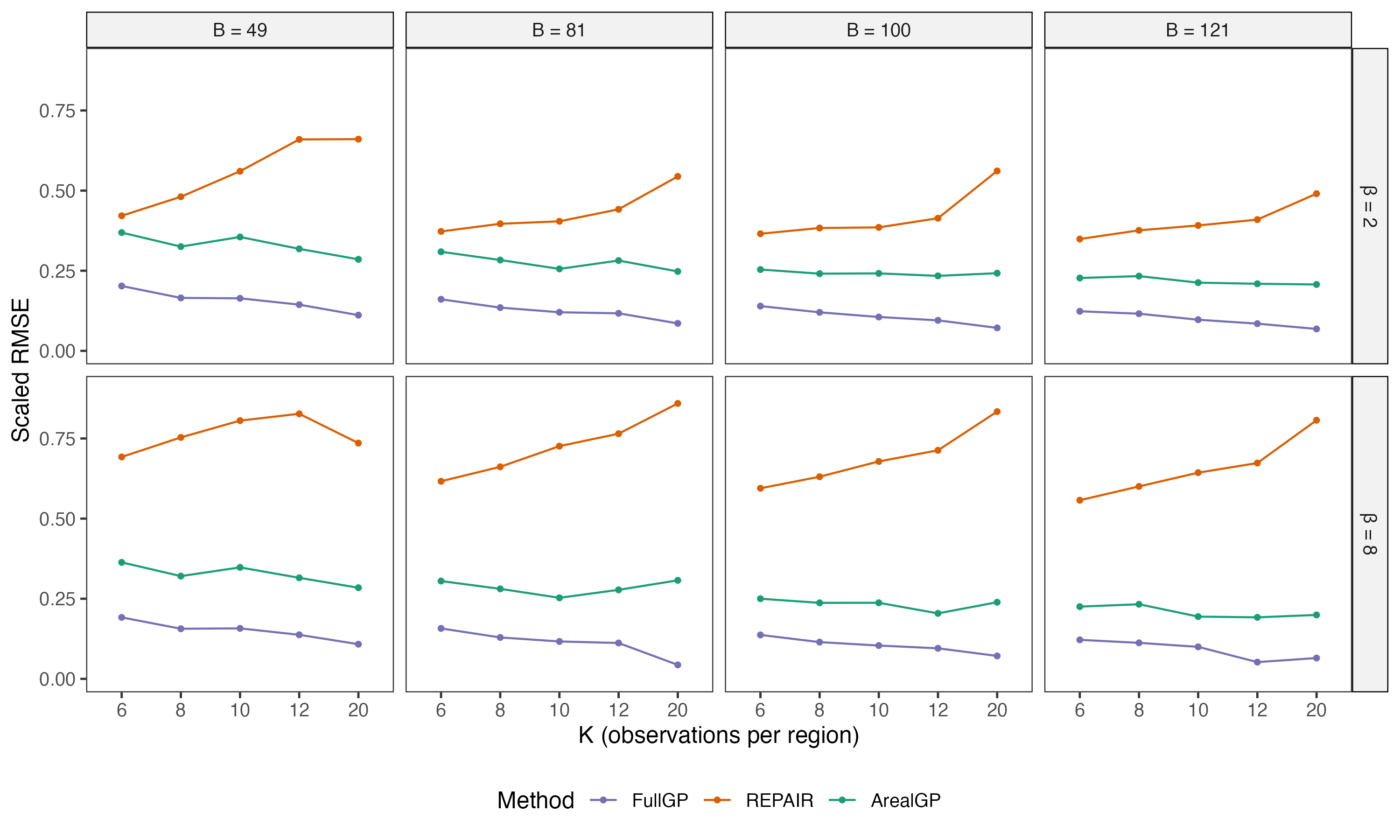}
        \caption{Scaled RMSE of $\widehat{\sigma^2 \phi}$.}
        \label{fig:sigmasqphi_rmse_lines}
    \end{subfigure}
    
    \caption{Scaled RMSE for covariance-related parameters across simulation settings.}
    \label{fig:covparam_rmse_lines}
\end{figure}

Estimation of covariance parameters such as $\sigma^2$ and $\phi$ is often challenging in finite samples and this difficulty is reflected in our simulations through the comparatively higher RMSE values for variance-related parameters relative to the regression coefficient. This is consistent with the broader literature: \citet{zhang2004inconsistent} showed that under fixed-domain asymptotics, the variance and range parameters of the Matérn class cannot be estimated consistently in isolation, and only certain microergodic combinations remain well-identified. While this non-identifiability is specific to the fixed-domain regime, it underscores a more general difficulty in disentangling covariance parameters that persists even at moderate sample sizes.

In our setting, the finite-sample imprecision in estimating $\sigma^2$ and $\phi$ is likely further amplified by the latent permutation structure and by the use of a mean-field variational approximation. The mean-field factorization substantially simplifies computation, but it can also reduce accuracy for nuisance covariance parameters, which is consistent with the larger RMSE values observed in our experiments.

\end{document}